\documentclass{amsart}
\usepackage{amscd,amssymb,amsopn,amsmath,amsthm,graphics,amsfonts,accents,enumerate,verbatim,calc}
\usepackage[dvips]{graphicx}
\usepackage[colorlinks=true,linkcolor=red,citecolor=blue]{hyperref}
\usepackage[all]{xy}
\usepackage{tabu}
\usepackage{tikz}
\usepackage{mathdots}

\addtolength{\textwidth}{2cm}
\calclayout

\newcommand{\rt}{\rightarrow}
\newcommand{\lrt}{\longrightarrow}

\newcommand{\st}{\stackrel}

\newcommand{\la}{\lambda}
\newcommand{\La}{\Lambda}
\newcommand{\Ga}{\Gamma}

\newcommand{\CA}{\mathcal{A} }
\newcommand{\CC}{\mathcal{C} }
\newcommand{\DD}{\mathcal{D} }
\newcommand{\CE}{\mathcal{E}}

\newcommand{\CI}{\mathcal{I} }

\newcommand{\CP}{\mathcal{P} }

\newcommand{\CX}{\mathcal{X} }
\newcommand{\CY}{\mathcal{Y} }
\newcommand{\CW}{\mathcal{W}}
\newcommand{\CV}{\mathcal{V}}
\newcommand{\CU}{\mathcal{U}}

\newcommand{\CH}{\mathcal{H}}

\newcommand{\mmod}{{\rm{{mod\mbox{-}}}}}

\newcommand{\Hom}{{\rm{Hom}}}
\newcommand{\Ext}{{\rm{Ext}}}

\newcommand{\bsm}{\begin{smallmatrix}}
	\newcommand{\esm}{\end{smallmatrix}}
\newcommand{\bbm}{\begin{matrix}}
	\newcommand{\ebm}{\end{matrix}}

\theoremstyle{plain}
\newtheorem{theorem}{Theorem}[section]
\newtheorem{corollary}[theorem]{Corollary}
\newtheorem{lemma}[theorem]{Lemma}

\newtheorem{proposition}[theorem]{Proposition}

\theoremstyle{definition}
\newtheorem{definition}[theorem]{Definition}
\newtheorem{example}[theorem]{Example}

\newtheorem{construction}[theorem]{Construction}

\newtheorem{remark}[theorem]{Remark}

\theoremstyle{plain}

\theoremstyle{definition}

\numberwithin{equation}{section}

\begin{document}

\title[ the  morphism category of projective modules]{The Auslander-Reiten theory of the  morphism category of projective modules}

\author[     Rasool Hafezi and Jiaqun Wei   ]{    Rasool Hafezi and Jiaqun Wei}
\dedicatory{}
\address{School of Mathematics and Statistics,
Nanjing University of Information Science \& Technology, Nanjing 210044, P.\,R. China}
\email{hafezi@nuist.edu.cn}
\address{School of Mathematics Science,
Nanjing Normal University, Nanjing 210023, P.\,R. China}
\email{weijiaqun@njnu.edu.cn}

\subjclass[2010]{18A20, 16G70, 16G10}

\keywords{Morphism category, almost split sequence, cokernel functor, $g$-vector}

\begin{abstract}
We investigate the structure of certain almost split sequences in $\CP(\La)$, i.e., the category of morphisms between projective modules over an Artin algebra $\Lambda$. The category $\CP(\La)$ has very nice properties and is closely related to $\tau$-tilting theory, $g$-vectors, and Auslander-Reiten theory. We provide explicit constructions of certain almost split sequences ending at or starting from  certain objects. Applications, such as to $g$-vectors, are given. As a byproduct, we also show that there exists an injection from Morita equivalence classes of Artin algebras to equivalence classes of 0-Auslander exact categories.
\end{abstract}

\maketitle
\section{Introduction}\label{Introduction}
Almost split sequences, or Auslander-Reiten sequences, have played an important role in the study of representation theory of Artin algebras since their introduction. Almost split sequences have provided crucial insights into the representation theory of finite dimensional algebras by revealing connections between indecomposable modules. The existence of almost split sequences, or Auslander-Reiten triangles, has been studied in various contexts: abelian categories \cite[Chapter V]{ARS}, exact categories \cite{DRSSK, GR}, triangulated categories \cite{Ha}, extriangulated categories \cite{INP} and tri-exact categories \cite{LN}.

In \cite{HE}, one author of the current paper together with H. Eshraghi studied the structure of almost split sequences with certain ending terms in $\CH(\La)$, the morphism category of an Artin algebra $\La$. Some middle terms of certain almost split sequences are also discussed in the paper.

In this paper, we aim to understand the detailed construction of certain almost split sequences in $\CP(\La)$, i.e., the category of morphisms between projective modules over an Artin algebra $\Lambda$. The category $\CP(\La)$ is an extension closed functorially finite subcategory of the morphism category $\CH(\La)$ and has almost split sequences.

 Studying $\CP(\La)$ instead of $\mmod \La$, where we deal only with projective modules, has some benefits. First, there is a natural duality between $\CP(\La)$ and $\CP(\La^{\rm op})$ induced by the functor $(-)^*=\Hom_{\La}(-,\La)$. This may provide new insights. Second, the Nakayama functor induces an equivalence  between   $\CP(\La)$ and the  morphism category  $\CI(\La)$ of injective modules, see Section \ref{Section4} for more details.  The category  $\CP(\La)$ is closely related to almost split sequences since the minimal projective presentation of a module is an object in $\CP(\La)$ and this minimal projective presentation is the basis to define the transpose of the module,  necessary in the orginal study of almost split sequences \cite{ARS,ASS}. The category  $\CP(\La)$ is also related to  $g$-vectors, as the definition of the $g$-vector is given by terms of the minimal projective presentations. Moreover, the $\tau$-tilting theory and two-term silting complexes are naturally related to $\CP(\La)$.

The paper is organised as follows.

In Section 3, we study $\CP(\La)$-approximations of objects in $\CH(\La)$. For certain objects in $\CH(\La)$, we provide   explicit formulas for the (minimal) right and  left $\CP(\La)$-approximations.

In Section 4, we first define the concept of 0-Auslander exact categories, which is inspired by the notion of 0-Auslander extriangulated categories recently introduced in \cite{GNP2}. We will demonstrate that there exists an injection from Morita equivalence classes of Artin algebras to equivalence classes of 0-Auslander exact categories. We will also establish that the cokernel functor behaves well respect to almost split sequences. As a result, we establish a close connection between the AR-quiver of $\CP(\La)$ and that of  $\La$. In the reminder of this section, we will study the morphism category of injective modules. This enables us to view some known equivalences in terms of morphism categories.

In Section 5, we determined explicitly the almost split sequences in $\CP(\La)$ ending at or starting from objects of the
form  ($0\to P$) or ($P\to 0$). These results will be helpful in constructing the Auslander-Reiten quiver of $\CP(\La)$, particularly when the knitting procedure works
well. As an application, we will provide additional information for the canonical map introduced in Remark \ref{canonicalRemark}

In the last section, we collect some consequence for mod-$\La$ by making use of our study of the morphism category $\CP(\La)$. In particular, we will provide some results about g-vectors, which are important in $\tau$-tilting theory as introduced
in \cite{AIR}.

\section{Preliminaries}\label{Preleminary}
In this section, for the convenience of the reader, we present some definitions and results that will be used throughout the paper.

\subsection{Factor category}
Let $\DD$ be a class of objects of a category $\CC$.
Then  ideal of $\CC$ formed by all maps factoring through
a finite direct sum of objects in $\DD$ is denoted by $<\DD>$. The factor category $\frac{\CC}{<\DD>}$ has the same objects as  $ \CC$. For any  $X, Y \in \CC$, we define the hom-set
\[
\frac{\CC}{<\DD>}(X, Y ) := \frac{\mathcal{C}(X, Y )} {<\DD>(X, Y )}.
\]
For ease of notation, we follow  \cite{RZ} and  write $\frac{\CC}{\DD}$  or  $\CC/\DD$  instead of $\frac{\CC}{<\DD>}$.

\subsection{Morphism category}

Let $\CC$ be a category.
The morphism category $\rm{H}(\CC)$ of $\CC$ is a category whose objects are morphisms $f:X\rt Y$ in $\CC$, and whose morphisms are given by commutative diagrams. If we regard the morphism $f:X\rt Y$ as an object in $\rm{H}(\CC)$, we will usually present it as
 $(X\st{f}\rt Y)$ or
 $\left(\begin{smallmatrix}
 X\\
 Y
 \end{smallmatrix}
 \right)_f$.
A morphism between the objects $(X\st{f}\rt Y)$ and $(X'\st{f'}\rt Y')$ is presented as
$(\sigma_1, \sigma_2): (X\st{f}\rt Y)\rt (X'\st{f'}\rt Y')$
or,   $\left(\begin{smallmatrix}
 \sigma_1\\
 \sigma_2
 \end{smallmatrix}
 \right): \left(\begin{smallmatrix}
 X\\
 Y
 \end{smallmatrix}
 \right)_f \rt \left(\begin{smallmatrix}
 X'\\
 Y'
 \end{smallmatrix}
 \right)_{f'}$, where $\sigma_1:X\rt X'$ and $\sigma_2:Y\rt Y'$ are morphisms in $\CC$ with $\sigma_2f=f'\sigma_1.$

\vspace{.1 cm}
For breviary,  we let $\CH(\La):={\rm H}(\mmod \La)$ and $\CP(\La):={\rm H}({\rm prj}\mbox{-}\La)$. If there is no possibility of  confusion, we simply write $\CH$ and $\CP$, respectively. It is known that $\mmod T_2(\La)\simeq \CH(\La)$, where $ T_2(\La)$ is upper triangular matrix algebra over $\La$. We can freely apply this equivalence to use results from the module category in  the abelian category $\CH.$
\subsection{Relative Auslander-Reiten theory}\label{relative AR-theory}
Following \cite{K},  we have the notion of splitting projective modules and Ext-projective modules in a subcategory $\CX$ of $\mmod \La.$ Specifically, a module $X$ in $\CX$ is called Ext-projective if $\Ext^1_{\La}(X, Y)=0$ for every $Y$ in $\CX.$  A module $Y $ in $\CX$ is said to be splitting-projective if every epimorphism $X\rt Y$ in $\mmod \La$ with $X \in \CX$ splits. The notions of splitting injective and Ext-injective are defined dually.

For all $Z \in \mmod \La,$ we denote by $f^Z:Z\rt X^Z$ a minimal left $\CX$-approximation (if it exists), and by $g_Z:X_Z\rt Z,$ a minimal right $\CX$-approximation of $Z$.
\begin{theorem}\cite[Corollary 5.4]{KP}\label{KP-ext-proj}
 Let $\CX$ be contravariantly finite and extension-closed subcategory in $\mmod \La$. Let $C \in \CX$ be indecomposable and not Ext-projective,  and let $\lambda: 0 \rt \tau_{\La} C \rt E \rt C \rt 0$ be an almost split sequence in $\mmod \La$.
 \begin{itemize}
     \item [$(a)$] There exists a unique,  up to isomorphism,  exact commutative diagram
     	$$\xymatrix{		
		\alpha: \ \ 0\ar[r]&	X_{\tau_{\La}C}\ar[d]^{g_{\tau_{\La}C}} \ar[r] & X_E
			\ar[d]^{g_E} \ar[r] & C \ar@{=}[d]\ar[r] &0 \\ \lambda: \ \		0 \ar[r]&	\tau_{\La} C\ar[r] & E
			\ar[r] &  C \ar[r]&0 }	$$
     \item [$(b)$]The exact sequence $\alpha$ is isomorphic to  a direct sum of an  almost split sequence $0 \rt \tau_{\CX} C \rt X \rt C \rt 0$ in $\CX$ and the split exact sequence $0 \rt Y \st{1} \rt Y \rt 0 \rt 0 $ with Ext-injective module $Y$ in $\CX.$

     \item [$(c)$] $\alpha$ is an almost split sequence in $\CX$ if and only if $X_{\tau_{\La} C}$ is indecomposable.

 \end{itemize}
\end{theorem}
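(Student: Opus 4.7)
My plan is to establish the three parts in sequence using the interplay between the almost split sequence $\lambda$ in $\mmod \La$ and the $\CX$-approximations.

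For part (a), I would construct $\alpha$ directly. Let $g_E \colon X_E \rt E$ be the minimal right $\CX$-approximation and set $\pi := (E \rt C) \circ g_E$. The first step is to show $\pi$ is surjective. Since $C$ is not Ext-projective in $\CX$, there exists $N \in \CX$ with $\Ext^1_\La(C, N) \neq 0$, giving a non-split short exact sequence $0 \rt N \rt Z \rt C \rt 0$ with $Z \in \CX$ by extension-closedness. The map $Z \rt C$ is not a split epimorphism, hence, by the almost split property of $\lambda$, factors through $E \rt C$, and then through $g_E$ by the approximation property. Thus $Z \rt X_E \rt E \rt C$ equals the given surjection, forcing $\pi$ to be epi. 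Let $K := \Ker \pi$; comparing the SES $0 \rt K \rt X_E \rt C \rt 0$ with $\lambda$ via $1_C$ yields a canonical map $K \rt \tau_\La C$. Any morphism $X \rt \tau_\La C$ with $X \in \CX$ gives, via the inclusion into $E$, a map $X \rt E$ that factors through $g_E$, and the factorization lands in $K$ because the composition to $C$ vanishes; this shows $K \rt \tau_\La C$ has the right approximation property. I would then deduce $K \in \CX$ and that $K \rt \tau_\La C$ is minimal, hence equals $g_{\tau_\La C}$, from the minimality of $g_E$ together with extension-closedness. Uniqueness of $\alpha$ up to isomorphism follows from the uniqueness of minimal right approximations.

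For part (b), I would invoke the Auslander-Smal\o\ existence theorem for almost split sequences in $\CX$ to obtain $\mu \colon 0 \rt \tau_\CX C \rt F \rt C \rt 0$, which exists since $\CX$ is functorially finite and extension-closed and $C$ is indecomposable and not Ext-projective. Comparing $\alpha$ with $\mu$ via identity on $C$, the almost split property of $\mu$ produces a morphism $\alpha \rt \mu$; combined with the minimality of $g_{\tau_\La C}$ and $g_E$, this comparison forces $\mu$ to split off as a direct summand $\alpha \cong \mu \oplus \beta$, where $\beta$ is a split SES of the shape $0 \rt Y \st{1}\rt Y \rt 0 \rt 0$. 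The Ext-injectivity of $Y$ in $\CX$ is a standard consequence of the decomposition, reflecting that the $\mu$-summand already captures all nontrivial extension classes at $C$ within $\CX$, so $\Ext^1_\La(X, Y) = 0$ for all $X \in \CX$.

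Part (c) is then immediate: if $X_{\tau_\La C}$ is indecomposable, the decomposition $X_{\tau_\La C} = \tau_\CX C \oplus Y$ from (b) forces $Y = 0$ (since $\tau_\CX C$ is nonzero), so $\alpha = \mu$ is almost split in $\CX$; conversely, if $\alpha$ is almost split in $\CX$, its left term $X_{\tau_\La C}$ is indecomposable by general AR theory. The principal obstacle is the identification of $K$ with $X_{\tau_\La C}$ in part (a): since $\CX$ is only assumed extension-closed, not closed under kernels, the membership $K \in \CX$ is not automatic and must be forced by a delicate minimality argument combining the universal property of $g_E$ with the almost split property of $\lambda$.
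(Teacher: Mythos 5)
The paper does not prove this statement: it is quoted verbatim from \cite[Corollary 5.4]{KP}, so there is no internal proof to compare against and your attempt must be judged on its own. Its architecture is reasonable, but the crux of part $(a)$ is left unproven. You set $K=\Ker\bigl(X_E\rt E\rt C\bigr)$ and correctly show that the composite $X_E\rt E\rt C$ is surjective and that every morphism $X\rt \tau_{\La}C$ with $X\in\CX$ factors through $K\rt\tau_{\La}C$; but a right $\CX$-approximation must, by definition, have its source in $\CX$, and extension-closure controls middle terms of extensions, not kernels of epimorphisms between objects of $\CX$. You acknowledge that $K\in\CX$ is ``the principal obstacle'' and defer it to ``a delicate minimality argument'' that you never supply, so the identification $K\cong X_{\tau_{\La}C}$ --- on which $(b)$ and $(c)$ also rest --- is not established. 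The standard way to avoid this is to build the sequence from the other end: since $C$ is not Ext-projective there is a non-split sequence $0\rt X_0\rt Z_0\rt C\rt 0$ with $X_0\in\CX$; as $Z_0\rt C$ is not a retraction it factors through $E\rt C$, giving a morphism of extensions onto $\lambda$ with some $u:X_0\rt\tau_{\La}C$ on the left, and factoring $u$ through $g_{\tau_{\La}C}$ shows that $[\lambda]$ lies in the image of $\Ext^1_{\La}(C,g_{\tau_{\La}C})$. A preimage is a sequence $0\rt X_{\tau_{\La}C}\rt Z\rt C\rt 0$ whose middle term lies in $\CX$ \emph{automatically} by extension-closure; one then verifies that the induced map $Z\rt E$ is a minimal right $\CX$-approximation.

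There is a second genuine gap in $(b)$: the Ext-injectivity of the complement $Y$ is asserted with the phrase that the $\mu$-summand ``already captures all nontrivial extension classes,'' which is not an argument. One must actually prove $\Ext^1_{\La}(X,Y)=0$ for all $X\in\CX$, for instance by showing that an almost split sequence in $\CX$ starting at an indecomposable summand of $Y$ would contradict the splitting $\alpha\cong\mu\oplus\beta$ together with the minimality of $g_{\tau_{\La}C}$. Two smaller points: the existence of the almost split sequence $\mu$ in $\CX$ should be justified from contravariant finiteness and extension-closure alone (the theorem does not assume $\CX$ functorially finite, contrary to what you write); and the splitting of $\mu$ off $\alpha$ requires comparison morphisms in both directions --- $\alpha\rt\mu$ from the almost split property of $\mu$ applied to the non-retraction $X_E\rt C$, and $\mu\rt\alpha$ from the almost split property of $\lambda$ combined with the approximation property of $g_E$ --- whose composite is an automorphism of $\mu$. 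The steps you do carry out (surjectivity of $X_E\rt C$, the factorization property of $K\rt\tau_{\La}C$, and part $(c)$ granted $(b)$) are correct.
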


We refer to \cite[Corollary 5.5]{KP} for  a dual version of the above theorem.

\subsection{certain almost split sequences in $\CH$}

We will present  alternative proofs for some results from \cite{HE}, which are more constructive.

\begin{proposition}\label{Reminder:0M}
	Let $(P\st{f}\rt Q)$ be an indecomposable non-projective object in $\CP$.  We set $M={\rm Cok}(P\st{f}\rt Q)$, $p_0:Q\rt M$ as the  canonical quotient map, and the epi-mono factorization  $f:P\st{p_1}\rt \Omega_{\La}(M)\st{i}\rt Q$ of the morphism $f$. Consider the following pull-back diagram:
	
	$$\xymatrix{		  & \Omega_{\La}(M) \ar[d]^h
			\ar@{=}[r] & \Omega_{\La}(M) \ar[d]^i  \\
			\tau_{\La} M\ar@{=}[d] \ar[r]^b & \tau_{\La} M \oplus Q
			\ar[d]^d \ar[r]^a & Q \ar[d]^{p_0}  \\
			\tau_{\La}M   \ar[r]^{k} & B
			\ar[r]^{g} & M }	$$
	where the lowest row is an almost split sequence in $\mmod \La.$ Then we have the following short exact sequence in $\CH$
	
	$$\xymatrix@1{  0\ar[r] & {\left(\begin{smallmatrix} 0\\ \tau_{\La}M\end{smallmatrix}\right)}_{0}
			\ar[rr]^-{\left(\begin{smallmatrix} 0 \\ b \end{smallmatrix}\right)}
			& & {\left(\begin{smallmatrix} P \\ \tau_{\La}M\oplus Q \end{smallmatrix}\right)}_{hp_1}\ar[rr]^-{\left(\begin{smallmatrix} 1 \\ a\end{smallmatrix}\right)}& &
			{\left(\begin{smallmatrix}P \\ Q\end{smallmatrix}\right)}_{f}\ar[r]& 0. } \ \    $$
\noindent	
	 In particular,  $\tau_{\mathcal{H}}(P\st{f}\rt Q)= (0 \rt \tau_{\La} M)$.
\end{proposition}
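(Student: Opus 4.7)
The approach is to verify directly that the constructed short exact sequence in $\CH$ is almost split; the AR-translate formula then follows from uniqueness of almost split sequences.

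\textbf{Exactness and non-splitness.} The middle row $0 \to \tau_\La M \st{b}\rt \tau_\La M \oplus Q \st{a}\rt Q \to 0$ is the pullback of the bottom almost split sequence along $p_0$, hence exact. Since $p_0 \circ i = 0$, the pullback universal property yields the unique $h \colon \Omega_\La M \to \tau_\La M \oplus Q$ with $ah = i$ and vanishing $B$-component. The commutativity checks for $\binom{0}{b}$ and $\binom{1}{a}$ being morphisms in $\CH$ reduce to $ahp_1 = ip_1 = f$, and the sequence is exact componentwise. A splitting would yield $\rho_2 \colon Q \to \tau_\La M \oplus Q$ of the form $\rho_2(q) = (\tilde\rho(q), q)$ with $\tilde\rho f = 0$, so $\tilde\rho$ factors as $\tilde\rho' p_0$ with $g\tilde\rho' = 1_M$, contradicting non-splitness of the AR-sequence for $M$.

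\textbf{Reduction and lift.} For a non-retraction $\phi = (\phi_1, \phi_2) \colon (X, Y, s) \to (P, Q, f)$ in $\CH$, let $\bar\phi \colon \Coker(s) \to M$ be the induced map. If $\bar\phi$ is not a retraction in $\mmod\La$, the AR-property gives $\bar\phi = g\beta$ for some $\beta \colon \Coker(s) \to B$; then $\tilde\phi_2(y) := (\beta\pi(y), \phi_2(y))$ lands in the pullback (since $g\beta\pi = p_0\phi_2$), and $(\phi_1, \tilde\phi_2)$ is the required lift of $\phi$ through $\binom{1}{a}$. The entire argument therefore hinges on showing $\bar\phi$ is not a retraction.

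\textbf{The key case.} Indecomposability of $(P,Q,f)$ in $\CP$ forces $Q$ to be the projective cover $P_M$ of $M$: writing $Q = P_M \oplus K$ with $K$ projective, the $K$-component $f_K \colon P \to K$ is surjective, so a basis change on $P$ and on $Q$ splits $(P,Q,f)$ as $(P', P_M, \alpha) \oplus (K \st{1}\rt K)$, forcing $K = 0$ by indecomposability (the case $M = 0$ is excluded since $(P,Q,f)$ would then be projective in $\CP$). Consequently $\Ker p_0 \subseteq \rad Q$. If $\bar\phi$ were split epi with section $\sigma$, lifting $\sigma p_0$ through $\pi \colon Y \to \Coker(s)$ using projectivity of $Q$ gives $\tilde\sigma \colon Q \to Y$ with $\phi_2\tilde\sigma \equiv 1_Q \pmod{\rad Q}$, which is an automorphism by Nakayama. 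Setting $\rho_2 = \tilde\sigma(\phi_2\tilde\sigma)^{-1}$ (so $\phi_2\rho_2 = 1_Q$) and lifting $\rho_2 f$ through $s$ using projectivity of $P$ to $\rho_1 \colon P \to X$ produces $\rho = (\rho_1, \rho_2) \colon (P, Q, f) \to (X, Y, s)$ with $\phi\rho = (\phi_1\rho_1, 1_Q) \in \End_\CH(P,Q,f)$. Since this endomorphism ring is local and the second component of $1 - (\phi_1\rho_1, 1_Q)$ equals $0$ (so is non-invertible in $\End Q$), the element $(\phi_1\rho_1, 1_Q)$ itself must be invertible; composing $\rho$ with its inverse yields a section of $\phi$, contradicting the non-retraction hypothesis. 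The main obstacle is precisely this last step, which combines indecomposability, the local-endomorphism-ring property, and a Nakayama argument available only because $Q = P_M$.
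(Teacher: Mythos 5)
Your proof is correct, but it takes a genuinely different route from the paper's. The paper does not verify the right-almost-split property directly: it imports the formula $\tau_{\CH}(P\st{f}\rt Q)=(0\rt\tau_{\La}M)$ from \cite[Proposition 2.4]{HE} and then invokes the criterion of \cite[\S V, Proposition 2.2]{AuslanreitenSmalo}, by which it suffices to factor every non-automorphism $\phi$ of $\tau_{\La}M$ through the monomorphism $\left(\begin{smallmatrix}0\\ b\end{smallmatrix}\right)$; this is a three-line check, since the bottom almost split sequence yields $s$ with $sk=\phi$ and then $\left(\begin{smallmatrix}0\\ sd\end{smallmatrix}\right)$ works because $db=k$. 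You instead test an arbitrary non-retraction $(X\st{s}\rt Y)\rt(P\st{f}\rt Q)$ in $\CH$, split into cases according to whether the induced map on cokernels is a retraction, and in the hard case use that $Q$ must be the projective cover of $M$ (so $\Ker(p_0)\subseteq\rad(Q)$), Nakayama's lemma, and the locality of $\End_{\CH}(P\st{f}\rt Q)$ to manufacture a section of $\phi$. What your approach buys is self-containedness: you never need to know the Auslander--Reiten translate in advance, and your key case is in effect a proof that the cokernel functor reflects retractions onto such objects, i.e.\ the converse direction of Lemma \ref{Lemma-left-right-CP}. What it costs is length, plus one hypothesis you should make explicit: your parenthetical claim that $M=0$ would force $(P,Q,f)$ to be projective is not quite right, since $(K\rt 0)$ is indecomposable and non-projective in $\CP$ with zero cokernel; that case must simply be excluded (as the paper implicitly does by treating it separately in Proposition \ref{proj-nakaya}), because there $\tau_{\La}M$ and the bottom almost split sequence do not exist and the statement itself degenerates. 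With that exclusion understood, every step of your argument checks out.
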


\begin{proof}
By  \cite[Proposition 2.4]{HE}, we have $\tau_{\CH}(P\st{f}\rt Q)=(0\rt \tau_{\La}M)$.  Using \cite[\S V, Proposition 2.2]{AuslanreitenSmalo}, it suffices to show that any non-isomorphism
$\left(\begin{smallmatrix}
0 \\\phi
\end{smallmatrix}\right):\left(\begin{smallmatrix}
0\\ \tau_{\La}M
\end{smallmatrix}\right)_{0}\rt \left(\begin{smallmatrix}
0\\ \tau_{\La}M
\end{smallmatrix} \right)_{0}$ factors over the monomorphism  lying in the short exact sequence.

Since $\left(\begin{smallmatrix}
0 \\\phi
\end{smallmatrix}\right)$ is a non-isomorphism in $\CH$, it follows that   $\phi$ is also  a non-isomorphism in $\mmod \La$. Thus,   there exists a morphism $s:B\rt \tau_{\La} M$ such that $sk=\phi$, as $\la$ is an almost split sequence in $\mmod \La$. Then, we can  verify directly that $\left(\begin{smallmatrix}
0 \\ sd
\end{smallmatrix}\right):\left(\begin{smallmatrix}
P\\ \tau_{\La} M \oplus Q
\end{smallmatrix}\right)_{hp_1}\rt \left(\begin{smallmatrix}
0\\\tau_{\La} M
\end{smallmatrix} \right)_{0}$ indeed gives  the desired factorization.
\end{proof}

\begin{proposition}\label{Reminder:MM}
	Let $(I\st{g}\rt J)$ be an indecomposable non-injective object in $\mathcal{H}$ with $I$ and $J$ injective. Set $N={\rm Ker}(I\st{g}\rt J)$, $i_0:N\rt I$ the canonical inclusion,  and the epi-mono factorization  $g:I\st{p_0}\rt \Omega^{-1}_{\La}(M)\st{i_1}\rt J$ of the morphism $g$. Consider the following push-out diagram
	
	$$\xymatrix{	N\ar[r]\ar[d]^{i_0}	  & C \ar[d]\ar[r]
			 & \tau^{-1}_{\La}N \ar@{=}[d]  \\
			I\ar[d]^{p_0} \ar[r]^d & I\oplus \tau^{-1}_{\La} N
			\ar[d]^s \ar[r]^c & \tau^{-1}_{\La} N   \\
			  \Omega^{-1}_{\La}N  \ar@{=}[r] & \Omega^{-1}_{\La} N
			 & }	$$
	where the top row is an almost split sequence in $\mmod \La.$ Then we have the following short exact sequence in $\CH$
	
	$$\xymatrix@1{  0\ar[r] & {\left(\begin{smallmatrix} I\\ J\end{smallmatrix}\right)}_{g}
			\ar[rr]^-{\left(\begin{smallmatrix} d \\ 1 \end{smallmatrix}\right)}
			& & {\left(\begin{smallmatrix} I\oplus \tau^{-1}_{\La}N \\ J \end{smallmatrix}\right)}_{i_1s}\ar[rr]^-{\left(\begin{smallmatrix} c \\ 0\end{smallmatrix}\right)}& &
			{\left(\begin{smallmatrix}\tau^{-1}_{\La}N\\ 0\end{smallmatrix}\right)}_{0}\ar[r]& 0. } \ \    $$
	\noindent
	 In particular,  $\tau^{-1}_{\mathcal{H}}(I\st{g}\rt J)= ( \tau^{-1}_{\La} N \rt 0)$.
\end{proposition}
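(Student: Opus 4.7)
The plan is to dualize the proof of Proposition \ref{Reminder:0M} step by step. First I would establish that $\tau^{-1}_{\mathcal{H}}(I\st{g}\rt J) = (\tau^{-1}_{\La} N \rt 0)$; this is the statement dual to \cite[Proposition 2.4]{HE} and can be obtained either directly or by transporting that result along the standard opposite-algebra duality between $\CH(\La)$ and $\CH(\La^{\op})$.

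Next I would verify that the displayed triple is a short exact sequence in $\CH$. Its top-component sequence is precisely the middle row $0 \rt I \st{d}\rt I \oplus \tau^{-1}_{\La} N \st{c}\rt \tau^{-1}_{\La} N \rt 0$ of the push-out diagram, and its bottom-component sequence is the split sequence $0 \rt J \st{1}\rt J \rt 0 \rt 0$, so termwise exactness is immediate. Commutativity of the two squares reduces to the identities $i_1 s d = i_1 p_0 = g$ (using the given epi-mono factorization of $g$ together with the relation $sd = p_0$ read off from the middle column) and $0 \cdot i_1 s = 0 = 0 \cdot c$.

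To finish, by the dual of \cite[\S V, Proposition 2.2]{AuslanreitenSmalo} it is enough to show that every non-isomorphism $\left(\begin{smallmatrix}\phi\\0\end{smallmatrix}\right) : (\tau^{-1}_{\La} N \rt 0)_0 \rt (\tau^{-1}_{\La} N \rt 0)_0$ lifts along the epimorphism $\left(\begin{smallmatrix}c\\0\end{smallmatrix}\right)$. Such a morphism is determined by a non-isomorphism $\phi$ of the indecomposable module $\tau^{-1}_{\La} N$. Let $\alpha: N \rt C$ and $\gamma: C \rt \tau^{-1}_{\La} N$ be the maps of the top row and let $u: C \rt I \oplus \tau^{-1}_{\La} N$ be the push-out morphism. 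From the push-out one reads off the identities $cu = \gamma$ and $su = 0$ (the second being exactness of the middle column). By the almost-split property of the top row, there exists $t: \tau^{-1}_{\La} N \rt C$ with $\gamma t = \phi$. Then $\left(\begin{smallmatrix} ut \\ 0 \end{smallmatrix}\right): (\tau^{-1}_{\La} N \rt 0)_0 \rt (I \oplus \tau^{-1}_{\La} N \rt J)_{i_1 s}$ is well-defined because $i_1 s \cdot ut = i_1 (su) t = 0$, and composing with $\left(\begin{smallmatrix} c \\ 0 \end{smallmatrix}\right)$ yields $(cut, 0) = (\gamma t, 0) = (\phi, 0)$, giving the required factorization.

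The main obstacle is the bookkeeping around the push-out identities $cu = \gamma$ and $su = 0$; both are consequences of the universal property and the resulting exactness of the middle column, but one must be careful to identify $s$ as the cokernel of $u$ (and not of some other map) so that $su = 0$ is available when checking that $\left(\begin{smallmatrix}ut\\0\end{smallmatrix}\right)$ is a genuine morphism of $\CH$.
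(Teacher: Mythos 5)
Your proposal is correct and follows essentially the same route as the paper: first identify $\tau^{-1}_{\CH}(I\st{g}\rt J)=(\tau^{-1}_{\La}N\rt 0)$ from the results of \cite{HE}, then invoke \cite[\S V, Proposition 2.2]{AuslanreitenSmalo} and lift every non-isomorphism of $(\tau^{-1}_{\La}N\rt 0)$ along $\left(\begin{smallmatrix}c\\0\end{smallmatrix}\right)$ using the almost split property of the top row together with the push-out identities $cu=\gamma$ and $su=0$. The only differences are cosmetic: the paper derives the translate from \cite[Proposition 2.2]{HE} via the Nakayama functor and uniqueness of minimal injective presentations rather than by directly dualizing \cite[Proposition 2.4]{HE}, and it leaves the lifting computation to the reader (``similarly to Proposition \ref{Reminder:0M}''), which you have written out correctly.
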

\begin{proof}
According to \cite[Proposition 2.2]{HE}, we have $\tau_{\CH}(0 \rt \tau^{-1} N)= (\nu P_1\st{\nu \alpha}\rt  \nu P_0)$, where $P_1\st{\alpha}\rt  P_0 \rt \tau^{-1} N\rt 0$ is a minimal projective presentation.  By applying  the following exact sequence (see \cite[\S III, Proposition 5.3]{SY}):
$$0 \rt N \rt \nu P_1 \st{\nu \alpha} \rt  \nu P_0 \rt \nu \tau^{-1}_{\La} N\rt 0$$
\noindent
and using this fact that $\nu:{\rm prj}\mbox{-}\La \rt {\rm inj}\mbox{-}\La $ is an  equivalence, we can deduce that $0 \rt N\rt \nu P_1\st{\nu \alpha}\rt  \nu P_0$ is a minimal injective presentation. Hence, by the uniqueness of the minimal injective presentation, we may identify $(I\st{g} \rt  J)=(\nu P_1\st{\nu \alpha}\rt \nu P_1)$. Therefore, $\tau^{-1}_{\CH}(I\st{g}\rt I)=(\tau^{-1}_{\La}N \rt 0)$. Then,  due to  \cite[\S V, Proposition 2.2]{AuslanreitenSmalo}, it suffices to show that any non-isomorphism
$\left(\begin{smallmatrix}
\psi \\ 0
\end{smallmatrix}\right):\left(\begin{smallmatrix}
 \tau^{-1}_{\La} N \\ 0
\end{smallmatrix}\right)_{0}\rt \left(\begin{smallmatrix}
 \tau^{-1}_{\La} N\\ 0
\end{smallmatrix} \right)_{0}$ factors over the epimorphism $\left(\begin{smallmatrix}
 c \\ 0
\end{smallmatrix} \right).$ This can be verified similarly to  Proposition \ref{Reminder:0M}.
\end{proof}
As the above propositions state,  one can  obtain  certain  almost split sequence in $\CH$ by performing  only   pull-back and push-out operations.
\begin{proposition}\label{proj-nakaya}
Let $Q$ be an indecomposable projective module. There is the following almost split sequence in $\CH$
	$$\xymatrix@1{  0\ar[r] & {\left(\begin{smallmatrix} 0\\ \nu Q\end{smallmatrix}\right)}_{0}
			\ar[rr]^-{\left(\begin{smallmatrix} 0\\ 1\end{smallmatrix}\right)}
			& & {\left(\begin{smallmatrix}Q\\ \nu Q \end{smallmatrix}\right)}_{f}\ar[rr]^-{\left(\begin{smallmatrix} 1 \\ 0\end{smallmatrix}\right)}& &
			{\left(\begin{smallmatrix}Q\\0\end{smallmatrix}\right)}_{0}\ar[r]& 0. }$$
In particular, $\tau_{\CH}(Q\rt 0)=(0 \rt \nu Q).$			
			
\end{proposition}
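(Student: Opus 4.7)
The plan is to mimic the strategy of Propositions \ref{Reminder:0M} and \ref{Reminder:MM} via the criterion \cite[\S V, Proposition 2.2]{AuslanreitenSmalo}, but with a direct argument rather than invoking \cite[Proposition 2.4]{HE}, because here $\Coker(Q\rt 0)=0$ and the formula $\tau_\CH=(0\rt\tau_\La M)$ from that reference would degenerate to $0$ instead of the correct translate $(0\rt\nu Q)$.

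First I would pin down the unspecified morphism $f$ as
$$f: Q \twoheadrightarrow \mathrm{top}(Q) \cong \mathrm{soc}(\nu Q) \hookrightarrow \nu Q,$$
using Nakayama's identification $\mathrm{top}(Q)\cong\mathrm{soc}(\nu Q)$. One checks routinely that the displayed row is a non-split short exact sequence in $\CH$ (exactness is componentwise, and a splitting would force $f=0$) and that both end terms are indecomposable.

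Then, by \cite[\S V, Proposition 2.2]{AuslanreitenSmalo}, it suffices to show that every non-isomorphism $(0,\psi_2):(0\rt\nu Q)_0\rt(0\rt\nu Q)_0$ factors through the monomorphism $\bigl(\begin{smallmatrix}0\\1\end{smallmatrix}\bigr)$. The key observation is that a non-isomorphism $\psi_2:\nu Q\rt\nu Q$ must vanish on the simple socle of $\nu Q$: otherwise $\psi_2|_{\mathrm{soc}(\nu Q)}$ would be injective, so $\Ker\psi_2\cap\mathrm{soc}(\nu Q)=0$ would force $\mathrm{soc}(\Ker\psi_2)=0$, hence $\Ker\psi_2=0$, making $\psi_2$ an isomorphism by finite length. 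Since $\im(f)\subseteq\mathrm{soc}(\nu Q)$, this yields $\psi_2 f=0$, so $(0,\psi_2):(Q\rt\nu Q)_f\rt(0\rt\nu Q)_0$ is a well-defined morphism in $\CH$, and composing with $\bigl(\begin{smallmatrix}0\\1\end{smallmatrix}\bigr)$ recovers $(0,\psi_2)$. The identification $\tau_\CH(Q\rt 0)=(0\rt\nu Q)$ then falls out of the almost split property.

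The main obstacle is guessing the correct $f$: one has to see that $\im(f)$ must land exactly in $\mathrm{soc}(\nu Q)$ in order for every non-isomorphism endomorphism of $(0,\nu Q)_0$ to lift. Once this is right, the verification is short and uses only the simple-socle structure of the indecomposable injective $\nu Q$.
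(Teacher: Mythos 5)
Your identification of $f$ as the composite $Q\twoheadrightarrow {\rm top}(Q)\cong{\rm soc}(\nu Q)\hookrightarrow\nu Q$, and the socle argument showing that every non-isomorphism $\psi_2\in\End(\nu Q)$ annihilates ${\rm soc}(\nu Q)$ and hence satisfies $\psi_2f=0$, are both correct; this is essentially the content of Remark \ref{canonicalRemark}. The gap is in the reduction step. The criterion of \cite[\S V, Proposition 2.2]{AuslanreitenSmalo} in the form you use --- a non-split exact sequence $0\rt A\rt B\rt C\rt 0$ with indecomposable ends is almost split provided every non-isomorphism $A\rt A$ factors through $A\rt B$ --- is valid only under the additional hypothesis that the end terms are already known to satisfy $A\cong\tau C$ (i.e.\ $C\cong{\rm Tr}D\,A$). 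This is precisely why the paper's proofs of Propositions \ref{Reminder:0M} and \ref{Reminder:MM} first establish the formula for $\tau_{\CH}$ by citing \cite{HE} and only then run the factorization test. Without that input the test is genuinely false: over the Kronecker algebra there are non-split sequences $0\rt P_1\rt B\rt\tau^{-2}P_1\rt 0$ whose end terms are indecomposable bricks, so the condition on endomorphisms of $P_1$ holds vacuously, yet the sequence is not almost split because $\tau(\tau^{-2}P_1)\not\cong P_1$. Your closing sentence, that ``$\tau_{\CH}(Q\rt 0)=(0\rt\nu Q)$ falls out of the almost split property,'' is therefore circular: the $\tau$-identification is an input to the criterion you apply, not an output of it.

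There are two ways to repair this. Either cite \cite[Proposition 2.2]{HE} for $\tau_{\CH}(Q\rt 0)=(0\rt\nu Q)$, as the paper does --- note that the paper invokes Proposition 2.2 of that reference, which treats the objects $(Q\rt 0)$ directly, not the cokernel formula of Proposition 2.4 whose degeneration you rightly worry about --- after which componentwise exactness forces the middle term to be $(Q\st{f}\rt\nu Q)$ with $f\neq 0$, and your endomorphism computation completes the verification. Or else prove directly that $\left(\begin{smallmatrix}0\\1\end{smallmatrix}\right)$ is \emph{left almost split}, which means testing against an arbitrary non-section $(0,u)\colon(0\rt\nu Q)_0\rt(X_1\st{d}\rt X_2)$ rather than only against endomorphisms of $(0\rt\nu Q)_0$. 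This stronger check does go through --- if $u({\rm soc}(\nu Q))\not\subseteq{\rm Im}(d)$ then $u$ is injective and the injectivity of $\nu Q$ yields a retraction $v$ with $vd=0$, contradicting that $(0,u)$ is not a section; otherwise $uf$ lands in ${\rm Im}(d)$ and lifts through $d$ by projectivity of $Q$ --- but it is a substantially different verification from the one you carried out.
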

\begin{proof}
We know from \cite[Proposition 2.2]{HE} that $\tau_{\CH}(Q\rt 0)= (0 \rt \nu Q)$, which forces us  to have an almost split sequence as in the statement.
\end{proof}
\begin{remark}\label{canonicalRemark}
An interesting aspect  of  Proposition \ref{proj-nakaya} is that  indecomposable modules $Q$ and $\nu Q$ are connected  by  a morphism that arises from  the middle term of an almost split in $\CH$. The morphism $f:Q\rt \nu Q$ that appears in the almost split sequence is denoted by  $\alpha_Q$ in the subsequent discussion. This notation is meaningful because $\alpha_Q$ is  unique up to isomorphism, due to  the uniqueness of the almost split sequence.
The morphism $\alpha_Q$ can be given canonically as follows: Since $\nu Q$ is an indecompsable injective module and $\alpha_Q$ a non-zero morphism, we can deduce that ${\rm top}(Q)\simeq {\rm soc}(\nu Q)$. The non-retraction $(1, ~0):(Q\st{p}\rt {\rm top}(Q) )\rt (Q\rt 0)$ factors through the epimorphism lying in the almost split sequence given  in Proposition \ref{proj-nakaya}. Then, by taking into account the isomorphism  ${\rm top}(Q)\simeq {\rm soc}(\nu Q)$,  we obtain the following factorization

 \[ \xymatrix@R-1.5pc { && Q\ar[rd]_>>>>>>>>>>{p}\ar[rr]^{\alpha_Q}&& \nu Q& &  \\ && & {\rm top}(Q)\simeq {\rm soc}(\nu Q)\ar[ru]_>>>>>>>{i}&&& }\]
Here,  $p$ and $i$ are the canonical epimorphism and inclusion, respectively.
\end{remark}






\section{ left (right)  $\CP(\La)$-approximations of certain objects}
In this section, we  provide  an explicit formula for a minimal right, resp. left, $\CP$-approximation of some certain objects in $\CH.$

\hspace{ 1 mm}

 We have an exact structure on $\CP$ inherited from $\CH$, since $\CP$ is a   extension closed  subcategory. Throughout the paper, we consider  the additive category  $\CP$   as an exact category by the inherited exact structure. It is easy to check  that an object ${\rm P}$ in $\CP$ is relative projective  in the exact category $\CP$ if and only it is splitting projective in the subcategory $\CP$ and if and only if it is Ext-projective in the subcategory $\CP$, see \ref{relative AR-theory}. \\

\begin{lemma}\label{proj-inde-p}
Let $\rm P$ be an indecomposable object in the exact category $\CP$.
\begin{itemize}
\item [$(1)$]$\rm P$ is projective if and only if it is isomorphic to either  $(P\st{1}\rt P)$ or $(0 \rt P)$ for some projective indecomposable $\La$-module.
\item [$(2)$] $\rm P$ is injective if and only if it is isomorphic to either  $(P\st{1}\rt P)$ or $(P \rt 0)$ for some projective indecomposable $\La$-module.
\end{itemize}
\end{lemma}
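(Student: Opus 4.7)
My plan is to prove each direction by exhibiting one transparent short exact sequence in $\CP$, and to obtain (2) from (1) by duality. The ``if'' direction of (1) is immediate: the functorial identifications
\[
\Hom_{\CH}\bigl((P\st{1}\rt P),(X\st{g}\rt Y)\bigr)=\Hom_\La(P,X),\qquad \Hom_{\CH}\bigl((0\rt P),(X\st{g}\rt Y)\bigr)=\Hom_\La(P,Y)
\]
are exact in $(X\st{g}\rt Y)$ whenever $P\in\prj \La$, so both $(P\st{1}\rt P)$ and $(0\rt P)$ are projective in $\CH$, and hence Ext-projective in the extension-closed subcategory $\CP$.

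For the ``only if'' direction of (1), let $\rm P=(P_1\st{f}\rt P_0)$ be indecomposable and Ext-projective in $\CP$. I would write down the short exact sequence
\[
0\rt (0\rt P_1)\rt (P_1\st{1}\rt P_1)\oplus(0\rt P_0)\rt (P_1\st{f}\rt P_0)\rt 0
\]
in $\CP$, whose epimorphism has first component $1_{P_1}$ and second component $(a,b)\mapsto f(a)+b\colon P_1\oplus P_0\rt P_0$. The middle term is projective in $\CH$, and Ext-projectivity of $\rm P$ forces the sequence to split; thus $\rm P$ is itself projective in $\CH\cong\mmod T_2(\La)$. Since $\CH$ is Krull--Schmidt and its indecomposable projectives are exhausted by $(P\st{1}\rt P)$ and $(0\rt P)$ for $P$ an indecomposable projective $\La$-module, the claim follows.

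For (2) I would apply the duality $(-)^{*}=\Hom_\La(-,\La)\colon\CP(\La)\rt\CP(\La^{\op})$, which exchanges Ext-projectives with Ext-injectives and sends $(P\st{1}\rt P),(0\rt P)$ to $(P^{*}\st{1}\rt P^{*}),(P^{*}\rt 0)$, so (2) for $\La$ reduces to (1) applied to $\La^{\op}$. Alternatively, a parallel direct argument proceeds via the short exact sequence
\[
0\rt(P_1\st{f}\rt P_0)\rt(P_0\st{1}\rt P_0)\oplus(P_1\rt 0)\rt(P_0\rt 0)\rt 0
\]
in $\CP$, after first checking that $(P\st{1}\rt P)$ and $(P\rt 0)$ are Ext-injective in $\CP$ via the formulas $\Hom_{\CH}((X\st{g}\rt Y),(P\st{1}\rt P))=\Hom_\La(Y,P)$ and $\Hom_{\CH}((X\st{g}\rt Y),(P\rt 0))=\Hom_\La(X,P)$ (the second being exact on $\CP$ precisely because first components of objects of $\CP$ are projective). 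The only subtlety I anticipate is the Krull--Schmidt matching at the end of each case, but it is immediate given the explicit decompositions of the middle terms.
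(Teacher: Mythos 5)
Your proposal is correct and follows essentially the same route as the paper: the paper's proof consists precisely of the two short exact sequences $0\rt (0\rt P)\rt (0\rt Q)\oplus(P\st{1}\rt P)\rt (P\st{d}\rt Q)\rt 0$ and $0\rt (P\st{d}\rt Q)\rt (Q\st{1}\rt Q)\oplus(P\rt 0)\rt (Q\rt 0)\rt 0$, which are exactly the sequences you exhibit. You merely make explicit the details the paper leaves to the reader (the Hom-identifications showing the relevant objects are Ext-projective/injective and the Krull--Schmidt step), and you add an optional shortcut for part (2) via the duality $(-)^{*}:\CP(\La)\rt\CP(\La^{\op})$, which is consistent with how the paper uses that duality elsewhere.
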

\begin{proof}
The lemma is proved easily by using  the following two exact sequences,  which exist for every object $(P\st{d}\rt Q)$ in $\CP$
    $$\xymatrix@1{  0\ar[r] & {\left(\begin{smallmatrix} 0\\ P\end{smallmatrix}\right)}_{0}
			\ar[rr]^-{\left(\begin{smallmatrix} 0\\ \left[\begin{smallmatrix} d\\ 1\end{smallmatrix}\right]\end{smallmatrix}\right)}
			& &{\left(\begin{smallmatrix}0\\  Q\end{smallmatrix}\right)}_{0}\oplus {\left(\begin{smallmatrix}P\\  P\end{smallmatrix}\right)}_{1}\ar[rr]^-{\left(\begin{smallmatrix} 1 \\ [-1~~d]\end{smallmatrix}\right)}& &
			{\left(\begin{smallmatrix}P\\Q\end{smallmatrix}\right)}_{d}\ar[r]& 0 }$$
and
    $$\xymatrix@1{  0\ar[r] & {\left(\begin{smallmatrix} P\\ Q\end{smallmatrix}\right)}_{d}
			\ar[rr]^-{\left(\begin{smallmatrix}  \left[\begin{smallmatrix} d\\ 1\end{smallmatrix} \right] \\ 1 \end{smallmatrix}\right)}
			& &{\left(\begin{smallmatrix}Q\\  Q\end{smallmatrix}\right)}_{1}\oplus {\left(\begin{smallmatrix}P\\  0\end{smallmatrix}\right)}_{0}\ar[rr]^-{\left(\begin{smallmatrix}  [-1~~d] \\ 0\end{smallmatrix} \right)}& &
			{\left(\begin{smallmatrix}Q\\0\end{smallmatrix}\right)}_{0}\ar[r]& 0. }$$
\end{proof}
The above lemma also follows from \cite[Corollary 3.3]{BSZ} by taking $n=2.$
\subsection{right  minimal $\CP$-approximation}

\begin{lemma}\label{righ-app-certan-obj}
Let $M$ be an indecomposable module. Let  $P_1\st{f}\rt P_0\st{\sigma} \rt  M \rt 0$ be  a minimal projective presentation of $M$. Then the following statements hold.
\begin{itemize}
    \item [$(1)$] The morphism $\left(\begin{smallmatrix} 0\\ \sigma \end{smallmatrix}\right):\left(\begin{smallmatrix} P_1\\ P_0\end{smallmatrix}\right)_f\rt \left(\begin{smallmatrix} 0\\ M\end{smallmatrix}\right)_0 $ is a right minimal $\CP$-approximation.
    \item [$(2)$] The morphism $\left(\begin{smallmatrix} \sigma\\ \sigma \end{smallmatrix}\right):\left(\begin{smallmatrix} P_0\\ P_0\end{smallmatrix}\right)_1\rt \left(\begin{smallmatrix} M\\ M\end{smallmatrix}\right)_1 $ is a right minimal $\CP$-approximation.
    \item [$(3)$] The morphism $\left(\begin{smallmatrix} \sigma\\ 0 \end{smallmatrix}\right):\left(\begin{smallmatrix} P_0\\ 0 \end{smallmatrix}\right)\rt \left(\begin{smallmatrix} M\\ 0 \end{smallmatrix}\right)$ is a right minimal $\CP$-approximation.
\end{itemize}

\end{lemma}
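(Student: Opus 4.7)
The plan is to verify all three parts by a common strategy that exploits two facts: (i) every object $(X\st{g}\rt Y)$ of $\CP$ has both $X$ and $Y$ projective, so any arrow into $M$ lifts through the epimorphism $\sigma:P_0\twoheadrightarrow M$; and (ii) since $\sigma:P_0\to M$ is a projective cover, any $\phi_0\in\End(P_0)$ with $\sigma\phi_0=\sigma$ is an automorphism, a standard consequence of $\Ker\sigma\subseteq{\rm rad}(P_0)$ combined with Nakayama's lemma.

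I would treat parts $(2)$ and $(3)$ first as warm-ups. For $(3)$, given any $(\alpha_1,0):(X\st{g}\rt Y)\rt(M\rt 0)$ in $\CH$, projectivity of $X$ provides $\gamma:X\rt P_0$ with $\sigma\gamma=\alpha_1$, so $(\gamma,0)$ is the sought factorization through $(P_0\rt 0)$. For $(2)$, a morphism $(\alpha_1,\alpha_2):(X\st{g}\rt Y)\rt(M\st{1}\rt M)$ satisfies $\alpha_2g=\alpha_1$; projectivity of $Y$ lifts $\alpha_2$ to $\beta:Y\rt P_0$, and then $(\beta g,\beta)$ factors $(\alpha_1,\alpha_2)$ through $(P_0\st{1}\rt P_0)$.

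Part $(1)$ is the main case. Given $(\alpha_1,\alpha_2):(X\st{g}\rt Y)\rt(0\rt M)$, commutativity forces $\alpha_2 g=0$. Projectivity of $Y$ produces $\beta:Y\rt P_0$ with $\sigma\beta=\alpha_2$, and then $\sigma(\beta g)=0$ shows $\beta g$ factors through $\Ker\sigma=\im f$. Projectivity of $X$ combined with the surjection $P_1\twoheadrightarrow\Ker\sigma$ induced by $f$ yields $\gamma:X\rt P_1$ with $f\gamma=\beta g$, so $(\gamma,\beta):(X\st{g}\rt Y)\rt(P_1\st{f}\rt P_0)$ is the desired morphism, and it composes with $(0,\sigma)$ to recover $(\alpha_1,\alpha_2)$.

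For right minimality, in all three parts we consider an endomorphism $(\phi_1,\phi_0)$ of the source whose composition with the approximation map equals the approximation map itself. In $(2)$ and $(3)$ this reduces to the single equation $\sigma\phi_0=\sigma$ (respectively $\sigma\phi_1=\sigma$), and (ii) above finishes the argument. In $(1)$ we first obtain $\phi_0$ iso from $\sigma\phi_0=\sigma$, so $\phi_0$ restricts to an automorphism of $\Ker\sigma$; the relation $f\phi_1=\phi_0 f$ then yields $f(\phi_1(P_1))=\phi_0(\im f)=\Ker\sigma=\im f$, so $\phi_1(P_1)+\Ker f=P_1$, and since $f:P_1\to\Ker\sigma$ is itself a projective cover (by minimality of the presentation, whence $\Ker f\subseteq{\rm rad}(P_1)$), Nakayama's lemma forces $\phi_1$ to be surjective, hence an isomorphism. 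The only mildly delicate point I anticipate is this double application of minimality of the presentation in part $(1)$; the remaining steps are routine bookkeeping in the morphism category.
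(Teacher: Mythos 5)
Your proposal is correct and follows essentially the same route as the paper: for the approximation property you lift through the projective cover $\sigma\colon P_0\to M$ and then through $P_1\twoheadrightarrow\Omega_{\La}(M)$, exactly as in the paper's proof of part $(1)$. The only difference is one of completeness — the paper proves only $(1)$, declares $(2)$ and $(3)$ "similar," and asserts minimality as a "direct consequence" of the minimality of the presentation, whereas you spell out the Nakayama-lemma argument (including the double use of minimality in part $(1)$); all of those added details are correct.
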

\begin{proof}
We only prove  $(1)$ as  proofs of the  other statements are rather similar. The proof of the minimality  of the morphism $\left(\begin{smallmatrix} 0\\ \sigma \end{smallmatrix}\right)$ is a direct consequence of the fact that $P_1\st{f}\rt P_0\st{\sigma}\rt M\rt 0$ is a minimal projective presentation of $M$. We will  show that  $\left(\begin{smallmatrix} 0\\ \sigma \end{smallmatrix}\right)$ is a right $\CP$-approximation of $M$. Assume a morphoism $\left(\begin{smallmatrix} 0\\ \psi \end{smallmatrix}\right):\left(\begin{smallmatrix} P\\ Q\end{smallmatrix}\right)_g\rt \left(\begin{smallmatrix} 0\\ M\end{smallmatrix}\right)_0 $ is given. Since $\sigma$ is a projective cover of $M$, there exists a morphism $d:Q\rt M$ such that $\sigma d=\psi.$ On the other hand, as $\sigma dg=\psi g=0$, there exists a morphism $\overline{d}:P\rt \Omega_{\La}(M)$ which satisfies the following commutative diagram
  \[ \xymatrix@R-1.5pc {  &    &P\ar[r]^{g}\ar@/_1pc/@{.>}[dddl]_{\overline{d}} & Q\ar[dd]^{\psi}\ar[ldd]_d \\  &&  & & \\ P_1\ar[rd]^{p}\ar[rr]^{f}&& P_0\ar[r]^{\sigma}& M\ar[r] & 0  \\&\Omega_{\La}(M)\ar[ru]^{i}&&& }\]
Because of the projective cover $p:P_1\rt \Omega_{\La}(M)$, we get the morphism $s:P\rt P_1$ with $ps_1=\overline{d}$. Now it is easily seen that the morphism  $\left(\begin{smallmatrix} s\\ d \end{smallmatrix}\right):\left(\begin{smallmatrix} P\\ Q\end{smallmatrix}\right)_g\rt \left(\begin{smallmatrix} P_1\\ P_0\end{smallmatrix}\right)_f $ provides the desired factorization.
\end{proof}
The following result provides a structural method for constructing  a right $\CP$-approximation of an given object in $\CH.$
\begin{proposition}\label{approx-cp}
Let $(N\st{g}\rt M)$ be  an object in $\CH$.  Assume $P_1\st{f}\rt P_0 \st{\sigma}\rt M\rt 0$ is a  projective presentation of $M$, and $Q\st{e}\rt N\rt 0$ is an epimorphism with a projective module $Q$. Further,  consider the following commutative diagram:
  \[\xymatrix{0\ar[r]&P_1 \ar[r]^{[1~~0]^t} \ar[d]^{p} & P_1\oplus Q\ar[r]^{[0~~1]} \ar[d]^{[p~~e']} 				 & Q \ar[r] \ar[d]^e\ar@{.>}[ld]^<<<{e'} & 0 \\0 \ar[r]&
        		\Omega_{\La}(M)\ar[r]^{j}\ar@{=}[d]
         & U \ar[r]^{h}\ar[d]^d
        			 & N \ar[r]\ar[d]^g  & 0  \\0\ar[r] &\Omega_{\La}(M)\ar[r]^i&P_0\ar[r]^{\sigma}& M\ar[r]& 0
        }\]
\noindent
where the rightmost square  is a pullback of $g $ along $\sigma$,   $f=ip$ an epi-mono factorization of $f$, and $e':Q\rt U$ a lifting of the morphism $e$, meaning that  $he'=e.$ Then

 $$\left(\begin{smallmatrix} [0~~e]\\ \sigma \end{smallmatrix}\right):\left(\begin{smallmatrix} P_1\oplus Q\\ P_0\end{smallmatrix}\right)_{[f~~de']}\rt \left(\begin{smallmatrix} N\\ M\end{smallmatrix}\right)_g $$
is a right $\CP$-approximation.
\end{proposition}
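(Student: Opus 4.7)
The plan is to verify the proposition in two stages: first check that the prescribed pair actually defines a morphism in $\CH$, then establish the universal factorization property using the data supplied by the diagram (a pullback, a projective presentation, and a projective cover of $N$). Minimality is not asked for, so the argument reduces to standard diagram chasing.

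For the first stage, I need to check $g\,[0\ e] = \sigma\,[f\ de']$. Splitting by blocks, on $P_1$ this reads $0 = \sigma f$, which holds because $P_1 \xrightarrow{f} P_0 \xrightarrow{\sigma} M \to 0$ is a complex; on $Q$ it reads $ge = \sigma d e'$, which follows from $\sigma d = gh$ (the middle square is a pullback, hence commutes) and $he' = e$. So the pair is a legitimate morphism in $\CH$.

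For the approximation property, take any $(R \xrightarrow{r} S)$ in $\CP$ and a morphism $\left(\begin{smallmatrix}\alpha\\\beta\end{smallmatrix}\right)\colon (R\xrightarrow{r}S)\to (N\xrightarrow{g}M)$, so $g\alpha = \beta r$. I will construct a factorization $\left(\begin{smallmatrix}s\\t\end{smallmatrix}\right)\colon (R\xrightarrow{r}S) \to \bigl(P_1\oplus Q \xrightarrow{[f\ de']} P_0\bigr)$ componentwise. Since $S$ is projective and $\sigma$ is epi, lift $\beta$ to $t\colon S \to P_0$ with $\sigma t = \beta$. Since $R$ is projective and $e$ is epi, lift $\alpha$ to $s_2\colon R \to Q$ with $e s_2 = \alpha$. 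The top component of $s$ will be a map $s_1\colon R \to P_1$, which is the only piece left to construct.

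The main (and only nontrivial) obstacle is producing $s_1$ so that $[f\ de']\left(\begin{smallmatrix}s_1\\s_2\end{smallmatrix}\right) = t r$, i.e.\ $f s_1 = t r - d e' s_2$. The key observation is that the right-hand side lands in $\Omega_\La(M) = \ker\sigma$: indeed
\[
\sigma(tr - de's_2) = \beta r - gh e' s_2 = \beta r - g e s_2 = \beta r - g\alpha = 0,
\]
using $\sigma t = \beta$, $\sigma d = gh$, $he' = e$, and $g\alpha = \beta r$. Hence $tr - de's_2$ factors through the inclusion $i\colon \Omega_\La(M) \hookrightarrow P_0$ as $iu$ for some $u\colon R \to \Omega_\La(M)$. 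Since the projective presentation is exact, $p\colon P_1 \twoheadrightarrow \Omega_\La(M)$ is epi, and projectivity of $R$ gives $s_1\colon R \to P_1$ with $ps_1 = u$. Then $f s_1 = ip s_1 = iu = tr - de's_2$, as required. Finally $[0\ e]\left(\begin{smallmatrix}s_1\\s_2\end{smallmatrix}\right) = e s_2 = \alpha$ and $\sigma t = \beta$, so $\left(\begin{smallmatrix}s\\t\end{smallmatrix}\right)$ is the desired factorization, completing the proof that the given map is a right $\CP$-approximation.
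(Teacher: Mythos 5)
Your proof is correct, and it takes a genuinely different route from the paper's. You argue by a direct lifting construction: given a test morphism $(\alpha,\beta)\colon (R\st{r}\rt S)\rt (N\st{g}\rt M)$ with $R,S$ projective, you lift $\beta$ through $\sigma$, lift $\alpha$ through $e$, observe that the defect $tr-de's_2$ is killed by $\sigma$ and hence factors through $i\colon\Omega_{\La}(M)\rt P_0$, and finally lift through $p$ to produce the top component. All the identities you use ($\sigma d=gh$, $he'=e$, exactness of the bottom row, $f=ip$ with $p$ epi) are supplied by the diagram, and the verification that the resulting pair is a morphism in $\CP$ and recovers $(\alpha,\beta)$ is exactly as you state. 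The paper instead argues homologically: it forms the short exact sequence in $\CH$ with the given map as its epimorphism, identifies the kernel as an object $(V\st{q}\rt \Omega_{\La}(M))$ with $q$ surjective (because $P_1\subseteq V$), filters that kernel by $({\rm Ker}(q)\rt 0)$ and $(\Omega_{\La}(M)\st{1}\rt\Omega_{\La}(M))$ to conclude it lies in $\CP^{\perp}$, and then invokes the standard fact that an epimorphism whose kernel is $\Ext$-orthogonal to $\CP$ is a right $\CP$-approximation. Your approach is more elementary and fully explicit -- it produces the factorization rather than asserting its existence -- and it does not even use the pullback universal property, only commutativity of the square; the paper's approach is shorter once the $\Ext$-vanishing facts for objects of the forms $(X\rt 0)$ and $(X\st{1}\rt X)$ are taken for granted, and it packages the kernel in a form reused elsewhere. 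Both are complete proofs of the stated approximation property.
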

\begin{proof}
Consider  the following short exact sequence in $\CH$
	$$\xymatrix@1{ \eta: \ \ 0\ar[r] & {\left(\begin{smallmatrix} V\\  \Omega_{\La}(M)\end{smallmatrix}\right)_q}
			\ar[rr]
			& & {\left(\begin{smallmatrix}P_1\oplus Q\\ P_0 \end{smallmatrix}\right)}_{[f~~de']}\ar[rr]^-{\left(\begin{smallmatrix} [0~~e] \\ \sigma\end{smallmatrix}\right)}& &
			{\left(\begin{smallmatrix}N\\M\end{smallmatrix}\right)}_{g}\ar[r]& 0 }.$$
As the direct summand $P_1$ is mapped  to zero by $[0~~e]$, it follows that $P_1 \subseteq V$. This implies  that  the morphism $q$ is surjective.
In view of the following short exact sequence,   we can deduce that $(V\st{q}\rt \Omega_{\La}(M))$ lies in $\CP^{\perp}$:
$$\xymatrix@1{  \ \ 0\ar[r] & {\left(\begin{smallmatrix} {\rm Ker}(q)\\ 0 \end{smallmatrix}\right)_0}
			\ar[rr]
			& & {\left(\begin{smallmatrix} V\\ \Omega_{\La}(M) \end{smallmatrix}\right)}_{q}\ar[rr]^-{\left(\begin{smallmatrix} q \\ 1\end{smallmatrix}\right)}& &
			{\left(\begin{smallmatrix}\Omega_{\La}(M)\\ \Omega_{\La}(M)\end{smallmatrix}\right)}_{1}\ar[r]& 0 }.$$
This means that the morphism $([0~~e], \sigma)$ given in the statement plays the  role of  a right $\CP$-approximation for the object $(N\st{g}\rt M)$.		
\end{proof}

\begin{proposition}\label{Prop.dom-proj.approx}
Let $g:Q\rt M$ be a projective cover and let $P_1\st{f}\rt P_0\st{\sigma}\rt M\rt 0$ be a minimal projective presentation of $M$. Then
 $$\left(\begin{smallmatrix} [0~~1]\\ \sigma \end{smallmatrix}\right):\left(\begin{smallmatrix} P_1\oplus Q\\ P_0\end{smallmatrix}\right)_{[f~~g']}\rt \left(\begin{smallmatrix} Q\\ M\end{smallmatrix}\right)_g, $$
where $g':Q\rt P_0$ is a lifting of $g$, i.e, $\sigma g'=g$, is a minimal right $\CP$-approximation.
\end{proposition}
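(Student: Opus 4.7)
The statement has two parts: the approximation property and minimality. For the approximation property, I will specialize the preceding Proposition \ref{approx-cp} to the case $N=Q$ and $e=1_Q$. Since $Q$ is projective and the pullback projection $h:U\rt Q$ is surjective, a lift $e':Q\rt U$ with $he'=1_Q$ exists; then $\sigma(de')=ghe'=g$, so $de'$ is a lift of $g$ and plays the role of $g'$ in the statement. The cited proposition then immediately yields that $\left(\begin{smallmatrix}[0~~1]\\ \sigma\end{smallmatrix}\right)$ is a right $\CP$-approximation.

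Minimality will be checked via the standard endomorphism criterion. Suppose $(\alpha_1,\alpha_0)$ is an endomorphism of $\left(\begin{smallmatrix}P_1\oplus Q\\ P_0\end{smallmatrix}\right)_{[f~~g']}$ satisfying $[0~~1]\,\alpha_1=[0~~1]$ and $\sigma\alpha_0=\sigma$; the goal is to show it is an automorphism. Writing $\alpha_1$ as a $2\times 2$ block matrix with entries $a:P_1\rt P_1$, $b:Q\rt P_1$, $c:P_1\rt Q$, $d:Q\rt Q$, the first identity forces $c=0$ and $d=1_Q$, while the second, combined with the minimality of the projective cover $\sigma$, forces $\alpha_0$ to be an automorphism of $P_0$. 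Since $\alpha_0$ and $\alpha_0^{-1}$ both fix $\sigma$, this automorphism restricts to an automorphism $\bar\alpha_0$ of $\ker\sigma=\Omega_\La M$.

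The crux, which I expect to be the main technical step, is showing that $a$ is an automorphism of $P_1$. Using the epi-mono factorization $f=ip$ with $p:P_1\twoheadrightarrow\Omega_\La M$ the projective cover (guaranteed by minimality of the presentation), the compatibility $\alpha_0 f=fa$ reads $i\bar\alpha_0 p=ipa$, whence $\bar\alpha_0 p=pa$ since $i$ is mono. Lifting $\bar\alpha_0$ through $p$ to an automorphism $\tilde e$ of $P_1$ (possible because $p$ is a projective cover, so lifts of automorphisms are automorphisms), one obtains $p(a-\tilde e)=0$, so $a-\tilde e$ factors through $\ker p\subseteq\rad P_1$ and hence lies in $\rad\End(P_1)$. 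Since units are stable under radical perturbations, $a=\tilde e+(a-\tilde e)$ is invertible. Thus $\alpha_1$ is upper triangular with invertible diagonal entries and is itself an automorphism, completing the proof of minimality.
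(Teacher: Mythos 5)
Your proof is correct and follows essentially the same route as the paper: the approximation property is obtained by specializing Proposition \ref{approx-cp} to $N=Q$ and $e=1_Q$ (noting $\sigma(de')=ghe'=g$), and minimality is verified via the same endomorphism criterion with the block-triangular analysis of the map on $P_1\oplus Q$. The only difference is that you spell out in detail, via lifting through the projective cover $p:P_1\twoheadrightarrow \Omega_{\La}(M)$ and a radical perturbation, the standard fact that the paper simply invokes as ``minimality of the projective presentation'' to conclude that the diagonal entries are automorphisms.
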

\begin{proof}
From Proposition \ref{approx-cp},  we can see that the  morphism given in the statement  is a right $\CP$-approximation. Thus,  it suffices to show that it is minimal. Suppose there exists an endomorphism $(\alpha, \beta):(P_1\oplus Q\st{[f~~g']}\rt P_0)\rt (P_1\oplus Q\st{[f~~g']}\rt P_0)$ such that $([0~~1], \sigma)\circ (\alpha, \beta)=([0~~1],  \sigma)$. Our goal is  to prove  that this  endomorphism is an automorphism. First, we observe that from   the commutative diagram  induced by the endomorpshim in $\mmod \La$ that the endomorphism $\alpha:P_1\oplus Q\rt P_1\oplus Q $ has no image in $Q$ when we  restrict $\alpha$ to  $P_1.$ Therefore,  the matrix presentation of $\alpha$ is given as
$\left[\begin{smallmatrix} \alpha_1 & \alpha_3 \\ 0 & \alpha_2 \end{smallmatrix}\right]$,
where $\alpha_1, \alpha_2$ are  endomorphisms on $P_1$ and $Q$,  respectively, and $\alpha_3$ is a morphism from $Q$ to $P_1.$  The following commutative diagram in $\mmod \La$ helps us easy to visualize the maps we are dealing with:
\begin{equation*}
    \begin{gathered}
      \xymatrix@!=0.5pc{ {} & P_1 \, \ar@{=}'[d][dd]\ar[dl]_{\alpha_1}
          \ar@{->}[rr]^-{[1~~0]^t} & & P_1\oplus Q \ar'[d][dd]_-{[f~~g']}
          \ar@{->}[rr]^-{[0~~1]}\ar[dl]^{\alpha}
          & & Q \ar[dd]^g\ar@{=}[dl]
          \\
          P_1 \,
                    \ar@{=}[dd]
          \ar@{->}[rr]^(0.70){[1~~0]^t} & &
          P_1\oplus Q
          \ar[dd]_(0.70){[f~~g']}
          \ar@{->}[rr]^(0.70){[0~~1]} & & Q
          \ar[dd]^<<<<{g}
          \\
          {} & P_1 \, \ar@{->}'[r]^-{f}[rr]\ar[dl]_(0.40){\alpha_1} & &
          P_0 \ar@{->}'[r]^-{\sigma}[rr]\ar[dl]^(0.50){\beta}
          & & M
          \\
          P_1
          \ar[rr]_-{f} & & P_0
                    \ar[rr]_-{\sigma} & & M\ar@{=}[ur]
                  }
    \end{gathered}
    \end{equation*}
By the minimality of projective presentation,  we can deduce that $\alpha_1$ and $\beta$ are authomorphisms. Moreover, we can  see that $g\alpha_2=g$, which implies that  $\alpha_2$ is also an authomorphism. Finally, the diagonal entries of $h$ are authomorphism, we can conclude that   $h$ is also  an authomorphism. So, we have completed our proof.
\end{proof}
\begin{proposition}\label{Prop-codm-prj}
Let $g: M \rt Q$ be a minimal morphism with a projective module $Q,$ and let $f: P\rt M\rt 0$ be a projective cover. Then
 $$\left(\begin{smallmatrix} f\\ 1 \end{smallmatrix}\right):\left(\begin{smallmatrix} P\\ Q\end{smallmatrix}\right)_{gf}\rt \left(\begin{smallmatrix} M\\ Q\end{smallmatrix}\right)_g $$
 is a minimal right $\CP(\La)$-approximation.
\end{proposition}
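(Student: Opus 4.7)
The plan is to verify the two defining properties of a minimal right $\CP$-approximation separately, and both should follow from nothing more than the universal property of a projective cover.

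First I would check the approximation property. Note $(P\st{gf}\rt Q)$ lies in $\CP$ because $P$ and $Q$ are both projective. Take an arbitrary object $(P'\st{h}\rt Q')\in\CP$ and an arbitrary morphism $\left(\begin{smallmatrix}\alpha\\ \beta\end{smallmatrix}\right):\left(\begin{smallmatrix}P'\\ Q'\end{smallmatrix}\right)_h\rt \left(\begin{smallmatrix}M\\ Q\end{smallmatrix}\right)_g$, so that $\beta h=g\alpha$. Since $P'$ is projective and $f:P\rt M$ is surjective, I can lift $\alpha$ along $f$ to obtain $\widetilde{\alpha}:P'\rt P$ with $f\widetilde{\alpha}=\alpha$. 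I would then verify that $\left(\begin{smallmatrix}\widetilde{\alpha}\\ \beta\end{smallmatrix}\right):\left(\begin{smallmatrix}P'\\ Q'\end{smallmatrix}\right)_h\rt \left(\begin{smallmatrix}P\\ Q\end{smallmatrix}\right)_{gf}$ is a genuine morphism in $\CH$: the required commutativity $gf\widetilde{\alpha}=\beta h$ is immediate from $gf\widetilde{\alpha}=g\alpha=\beta h$. Composing with $\left(\begin{smallmatrix}f\\1\end{smallmatrix}\right)$ recovers the original morphism, which establishes that $\left(\begin{smallmatrix}f\\1\end{smallmatrix}\right)$ is a right $\CP$-approximation.

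Next I would handle minimality. Let $\left(\begin{smallmatrix}\phi\\ \psi\end{smallmatrix}\right)$ be an endomorphism of $\left(\begin{smallmatrix}P\\ Q\end{smallmatrix}\right)_{gf}$ satisfying $\left(\begin{smallmatrix}f\\1\end{smallmatrix}\right)\circ\left(\begin{smallmatrix}\phi\\ \psi\end{smallmatrix}\right)=\left(\begin{smallmatrix}f\\1\end{smallmatrix}\right)$. This forces simultaneously $f\phi=f$ and $\psi=1$. Since $f:P\rt M$ is a projective cover, it is a right minimal morphism in $\mmod\La$, and hence $f\phi=f$ implies that $\phi$ is an automorphism of $P$. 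Consequently $\left(\begin{smallmatrix}\phi\\ 1\end{smallmatrix}\right)$ is an automorphism of $\left(\begin{smallmatrix}P\\ Q\end{smallmatrix}\right)_{gf}$, proving that $\left(\begin{smallmatrix}f\\1\end{smallmatrix}\right)$ is right minimal.

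There is really no serious obstacle here; the only point that requires brief care is the verification that the lifted pair $(\widetilde{\alpha},\beta)$ actually commutes with the distinguished morphisms of source and target, which is an immediate rewriting. Note that the minimality hypothesis on $g$ itself is not needed for this particular proposition; it plays no role in either the lifting argument or the appeal to the right minimality of the projective cover $f$.
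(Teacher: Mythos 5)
Your proof is correct. The minimality half is essentially the paper's argument: the equation $\left(\begin{smallmatrix} f\\ 1\end{smallmatrix}\right)\circ\left(\begin{smallmatrix}\phi\\ \psi\end{smallmatrix}\right)=\left(\begin{smallmatrix} f\\ 1\end{smallmatrix}\right)$ forces $\psi=1$ and $f\phi=f$, and right minimality of the projective cover $f$ finishes it (the paper's printed equations contain typos, but this is clearly the intended argument). Where you differ is the approximation half: the paper simply invokes its general Proposition~\ref{approx-cp}, specialising the projective presentation of the codomain to $0\rt Q\st{1}\rt Q$ and the chosen epimorphism onto the domain to the projective cover $f$, whereas you give a direct, self-contained lifting argument using only the projectivity of $P'$ and the surjectivity of $f$. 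Your route is more elementary and transparent for this special case; the paper's buys uniformity, since the same Proposition~\ref{approx-cp} is reused for Proposition~\ref{Prop.dom-proj.approx} and elsewhere. Your closing observation is also accurate: neither the lifting nor the minimality step uses the hypothesis that $g$ is minimal, so that assumption is not needed for this particular statement (contrast with the dual proposition at the end of Section~3, where minimality of the left approximation $h$ genuinely enters via $r_2h=h$).
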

\begin{proof}

From Proposition \ref{approx-cp},  we know that the morphism $(f, 1)$ is a right $\CP$-approximation. Thus, it remains  to show that it is minimal. Assume  $(g_1, g_2 )$ is an endomorphism of $(P\st{gf}\rt  Q)$ satisfying $(g_1, g_2)\circ (f, 1)=(g_1, g_2)$. By examining  the  commutative diagrams that follow from the equality, we can deduce that  $g_2=1$ and $fg_1=g_1$. Since $f$ is minimal,  $g_1$ must be  an isomorphism. Therefore, we have established the desired minimality.
\end{proof}

\begin{proposition}
The subcategory $\CP$ is functorially finite in $\CH$.
\end{proposition}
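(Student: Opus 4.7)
The plan is to verify the two halves of functorial finiteness of $\CP\subseteq\CH$ separately.

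\emph{Contravariant finiteness} is essentially already done: for any $(N\xrightarrow{g}M)\in\CH$, Proposition~\ref{approx-cp} constructs an explicit right $\CP$-approximation from any projective presentation $P_{1}\xrightarrow{f}P_{0}\xrightarrow{\sigma}M\rt 0$ of $M$ and any projective cover $Q\rt N\rt 0$ of $N$, by taking the pullback of $g$ along $\sigma$. Since Artin algebras have enough projectives, every object of $\CH$ admits a right $\CP$-approximation.

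For \emph{covariant finiteness}, I would reduce to a contravariant statement via Artin duality. The standard duality $D\colon\mmod\La\to\mmod\La^{\op}$ extends componentwise to a contravariant equivalence $D_{\CH}\colon\CH(\La)\rt\CH(\La^{\op})$ sending $(X\xrightarrow{f}Y)$ to $(DY\xrightarrow{Df}DX)$. Because $D$ interchanges projective and injective modules, $D_{\CH}$ restricts to a contravariant equivalence $\CP(\La)\simeq\CI(\La^{\op})$ between the morphism categories of projectives and injectives over opposite algebras, and being contravariant it swaps left with right approximations. Hence covariant finiteness of $\CP(\La)$ in $\CH(\La)$ is equivalent to contravariant finiteness of $\CI(\La^{\op})$ in $\CH(\La^{\op})$.

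To prove this latter statement, dualize the construction of Proposition~\ref{approx-cp}: given $(N\xrightarrow{g}M)\in\CH(\La^{\op})$, replace the projective presentation of $M$ by a minimal injective copresentation $0\rt N\rt I^{0}\rt I^{1}$ of $N$, replace the projective cover of $N$ by an injective envelope $M\hookrightarrow J$, and replace the pullback by a pushout of $g$ along $N\rt I^{0}$. Assembling the resulting diagram yields a morphism from an object of $\CI$ to $(N\rt M)$ whose kernel lies in $\CI^{\perp}$ by the same Ext-vanishing/cokernel argument that appears in the proof of Proposition~\ref{approx-cp}, with the roles of projective and injective interchanged. This gives right $\CI(\La^{\op})$-approximations, which via $D_{\CH}$ produces the desired left $\CP(\La)$-approximations in $\CH(\La)$.

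The main obstacle is essentially bookkeeping: tracking that under $D_{\CH}$ the subcategory $\CP(\La)$ is genuinely carried onto $\CI(\La^{\op})$ and that left and right approximations are swapped, and dually that the kernel of the mirror construction really sits in $\CI^{\perp}$ so that one has a bona fide right $\CI$-approximation. Both checks are formally parallel to what is done in Proposition~\ref{approx-cp}, so no new ideas are required beyond a careful dualization.
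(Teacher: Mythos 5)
Your first half is fine and agrees with the paper: Proposition \ref{approx-cp} supplies a right $\CP$-approximation for every object of $\CH$, so $\CP$ is contravariantly finite. The problem is in the second half. The duality reduction itself is correct: $D_{\CH}$ carries $\CP(\La)$ onto $\CI(\La^{\op})$ and, being contravariant, swaps left with right approximations, so covariant finiteness of $\CP(\La)$ in $\CH(\La)$ is equivalent to \emph{contravariant} finiteness of $\CI(\La^{\op})$ in $\CH(\La^{\op})$, i.e.\ to the existence of morphisms \emph{from} objects of $\CI$ \emph{to} a given $(N\st{g}\rt M)$ with the factorization property. But the mirror construction you then describe --- injective copresentation of $N$, injective envelope of $M$, pushout of $g$ along $N\rt I^{0}$ --- only produces maps going \emph{out of} $N$ and $M$ into injectives; assembling it yields a morphism $(N\rt M)\rt (\text{object of }\CI)$, i.e.\ a \emph{left} $\CI$-approximation (this is precisely the shape of Proposition \ref{left-approximation-3.10}, which the paper states as a left approximation). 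A left $\CI(\La^{\op})$-approximation transported back through $D_{\CH}$ is a \emph{right} $\CP(\La)$-approximation, which you already had from the first half. So the second half of your argument is circular and never produces a left $\CP$-approximation.

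The gap is repairable in two ways. The honest dualization you want is the dual of Proposition \ref{approx-cp2}: take a right $\inj\La^{\op}$-approximation $I_{0}\rt M$ (these exist because the injectives form $\add D\La^{\op}$), pull $g$ back along it, and take a further right $\inj\La^{\op}$-approximation of the pullback; the cokernel-side argument then shows this is a genuine right $\CI(\La^{\op})$-approximation. Alternatively --- and this is what the paper actually does --- observe that $\CP$ is a resolving subcategory of $\CH$ (it contains the projectives of $\CH$, is extension-closed, and is closed under kernels of epimorphisms since epimorphisms between projective modules split), so by \cite[Corollary 03]{KS} contravariant finiteness already implies functorial finiteness and no explicit left approximation needs to be constructed at all.
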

\begin{proof}
As $\CP$ is resolving, thanks to \cite[Corollary 03]{KS}, it is enough to prove  that $\CP$ is contravariantly finite. This follows from  Proposition \ref{approx-cp}.
\end{proof}
 The above result, combined  with \cite[Theorem 2.4]{AS},    implies that  $\CP$ has almost split sequences. This can also be proven by using   \cite[Theorem 4.5]{BSZ}, where we take  the dualizing variety $\CA={\rm prj}\mbox{-}\La$ and set $n=2$, or by \cite[Theorem 5.2]{Ba}.

 \subsection{left minimal $\CP$-approximation} There is a dual version of our  results concerning left minimal $\CP$-approximation. To introduce the dual,  we first  need to define the notion of a minimal projective copresentation.

\begin{definition}
Let $M$ be a module. An exact sequence $M\st{\delta}\rt P \st{g}\rt  Q$ is said to be a minimal projective copresentation of $M$ if it satisfies the following conditions:
\begin{itemize}
    \item [$(1)$] The morphism $\delta:M\rt P$ is a minimal left ${\rm prj}\mbox{-}\La$-approximation of $M$;
    \item [$(2)$] Let $q:{\rm Cok} \ \delta\rt Q $ be  a minimal left ${\rm prj}\mbox{-}\La$-approximation of ${\rm Cok} \ \delta$. The morphism $g$ is followed by the morphism $q$ and the canonical map $P\rt {\rm Cok} \ \delta$.
\end{itemize}
 If the   minimality condition on  the  morphisms is not necessary, the the sequence is simply called a projective copersentation.
\end{definition}
\begin{lemma}\label{righ-app-certan-obj2}
Let $M$ be an indecomposable module. Let  $M\st{\delta}\rt Q_0\st{g} \rt Q_1 $ be  a minimal projective copresentation of $M$. Then the following statements hold.
\begin{itemize}
    \item [$(1)$] The morphism $\left(\begin{smallmatrix} \delta\\ 0 \end{smallmatrix}\right):\left(\begin{smallmatrix} M\\ 0\end{smallmatrix}\right)_0\rt \left(\begin{smallmatrix} Q_0\\ Q_1\end{smallmatrix}\right)_0 $ is a left minimal $\CP$-approximation.
    \item [$(2)$] The morphism $\left(\begin{smallmatrix} \delta\\ \delta \end{smallmatrix}\right):\left(\begin{smallmatrix} M\\ M\end{smallmatrix}\right)_1\rt \left(\begin{smallmatrix} Q_0\\ Q_0\end{smallmatrix}\right)_1 $ is a left minimal $\CP$-approximation.
    \item [$(3)$] The morphism $\left(\begin{smallmatrix} 0\\ \delta \end{smallmatrix}\right):\left(\begin{smallmatrix} 0\\ M \end{smallmatrix}\right)\rt \left(\begin{smallmatrix} 0\\ Q_0 \end{smallmatrix}\right)$ is a left minimal $\CP$-approximation.
\end{itemize}
\end{lemma}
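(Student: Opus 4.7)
The plan is to prove this lemma by directly dualizing the argument of Lemma \ref{righ-app-certan-obj}. The definition of minimal projective copresentation is tailored precisely so that the dual argument goes through; I sketch the strategy for part $(1)$, since parts $(2)$ and $(3)$ are simpler and entirely analogous.

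For part $(1)$, given an arbitrary morphism $(\alpha,\beta):(M\rt 0)_0 \rt (P\st{h}\rt Q)_h$ in $\CH$ with target in $\CP$, the commuting square forces $\beta=0$ and $h\alpha=0$. Since $\delta:M\rt Q_0$ is a left $\prj\La$-approximation and $P$ is projective, there exists $\gamma_0:Q_0\rt P$ with $\gamma_0\delta=\alpha$. Using $g\delta=0$, the composite $h\gamma_0:Q_0\rt Q$ vanishes on $\delta$ and hence factors through the canonical quotient $Q_0\twoheadrightarrow \Coker\delta$. By definition of minimal projective copresentation, $g$ itself is the composition of this quotient with a left $\prj\La$-approximation $q:\Coker\delta\rt Q_1$; since $Q$ is projective, the induced map $\Coker\delta\rt Q$ lifts along $q$ to some $\gamma_1:Q_1\rt Q$ satisfying $\gamma_1 g = h\gamma_0$. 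Then $(\gamma_0,\gamma_1)$ is the required morphism in $\CP$ whose precomposition with $(\delta,0)$ recovers $(\alpha,0)$.

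For the minimality, I take an endomorphism $(\phi,\psi)$ of $(Q_0\st{g}\rt Q_1)$ with $(\phi,\psi)\circ(\delta,0)=(\delta,0)$, i.e., $\phi\delta=\delta$. Minimality of $\delta$ forces $\phi$ to be an automorphism; it descends to an automorphism $\bar\phi$ of $\Coker\delta$ satisfying $\psi q = q\bar\phi$, and minimality of $q$ then makes $\psi$ an automorphism as well.

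Parts $(2)$ and $(3)$ follow the same template but only invoke the approximation property of $\delta$ (and its minimality), not the second piece of the copresentation: for $(2)$, a morphism out of $(M\st{1}\rt M)_1$ is determined by its top component, which factors through $\delta$, and the bottom component is then forced by the commutative square; for $(3)$, a morphism out of $(0\rt M)_0$ is determined by its bottom component, which factors through $\delta$. The main obstacle is therefore isolated in part $(1)$, namely the production of $\gamma_1$: this is exactly what the defining recipe of $g$ (as an approximation of $\Coker\delta$ post-composed with the quotient) is designed to supply, so the argument will not go through if one weakens the definition of minimal projective copresentation.
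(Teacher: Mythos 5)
Your proposal is correct and is precisely the dualization that the paper itself invokes: its proof of this lemma consists of the single remark that it follows by arguments dual to those of Lemma \ref{righ-app-certan-obj}, and your construction of $\gamma_0$ via the approximation property of $\delta$ and of $\gamma_1$ via the factorization of $g$ through $\Coker\delta$ followed by the approximation $q$ is exactly that dual argument spelled out. The minimality argument via $\phi\delta=\delta$ and the induced automorphism of $\Coker\delta$ is likewise the expected dual of the paper's use of minimal projective presentations.
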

\begin{proof}
The lemma is  proven using  dual arguments to the ones used in Lemma \ref{righ-app-certan-obj}.
\end{proof}
\begin{proposition}\label{approx-cp2}
Let $(N\st{g}\rt M)$ be  an object in $\CH$ and  $\delta: N\rt Q_0$   a left $\CP$-approximation of  $N$. Consider the following  commutative diagram,  which is obtained by taking  push-out of $\delta$ along $g$:
  \[\xymatrix{&
        		N\ar[r]^{\delta}\ar[d]^g
         & Q_0 \ar[d]^h
        			 &     \\ &M\ar[r]^{\la}&U.&
        }\]
\noindent
 Then the following morphism is a left $\CP$-approximation,

 $$\left(\begin{smallmatrix} \delta \\ d \la \end{smallmatrix}\right):\left(\begin{smallmatrix} N\\ M\end{smallmatrix}\right)_{g}\rt \left(\begin{smallmatrix} Q_0\\ Q\end{smallmatrix}\right)_{dh} $$
where $d:U \rt Q$ is a  left  ${\rm prj}\mbox{-}\La$-approximation.
\end{proposition}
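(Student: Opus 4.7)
The plan is to establish the approximation property by a direct diagram chase, playing the two left ${\rm prj}\mbox{-}\La$-approximations $\delta$ and $d$ against the universal property of the push-out square. Unlike the dual Proposition~\ref{approx-cp}, where one embeds the candidate morphism as the surjection in a short exact sequence and checks that the kernel lies in $\CP^{\perp}$, here the morphism $(\delta,d\lambda)$ need not be a monomorphism in $\CH$ (since $\delta$ itself may fail to be injective), so the short-exact-sequence route is not directly available. Before the main argument I would first record that $(\delta,d\lambda)$ is genuinely a morphism in $\CH$ into the object $(Q_0\st{dh}\rt Q)$ of $\CP$; this is immediate from the push-out identity $h\delta=\lambda g$, which yields $(dh)\delta=(d\lambda)g$.

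The core of the argument is then to fix an arbitrary test object $(P'\st{f'}\rt Q')$ in $\CP$ and an arbitrary morphism $(\alpha,\beta)\colon(N\st{g}\rt M)\rt(P'\st{f'}\rt Q')$, and to construct a factorization through $(\delta,d\lambda)$ by three successive applications of a universal property. First, since $P'$ is projective and $\delta\colon N\rt Q_0$ is a left ${\rm prj}\mbox{-}\La$-approximation, I would produce $\alpha_1\colon Q_0\rt P'$ with $\alpha_1\delta=\alpha$. Next, observing that $f'\alpha_1\delta=f'\alpha=\beta g$, I would invoke the universal property of the push-out $U$ to obtain a unique $\mu\colon U\rt Q'$ satisfying $\mu h=f'\alpha_1$ and $\mu\lambda=\beta$. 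Finally, since $Q'$ is projective and $d\colon U\rt Q$ is a left ${\rm prj}\mbox{-}\La$-approximation, I would factor $\mu=\beta_1 d$ for some $\beta_1\colon Q\rt Q'$.

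After these three steps the verification should be routine: the identity $\beta_1(dh)=\mu h=f'\alpha_1$ shows that $(\alpha_1,\beta_1)$ is a morphism in $\CH$ from $(Q_0\st{dh}\rt Q)$ to $(P'\st{f'}\rt Q')$, while $\alpha_1\delta=\alpha$ and $\beta_1(d\lambda)=\mu\lambda=\beta$ simultaneously deliver the desired equality $(\alpha,\beta)=(\alpha_1,\beta_1)\circ(\delta,d\lambda)$. I do not anticipate a genuine obstacle; the main conceptual point is simply recognising the correct order in which the three universal properties must be applied, since attempting to factor $\beta$ through $d\lambda$ before passing through the push-out breaks the chase. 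Note also that no minimality is asserted in the statement, so one need not worry about the analogues of Propositions~\ref{Prop.dom-proj.approx} and \ref{Prop-codm-prj}; minimality would separately require both $\delta$ and $d$ to be \emph{minimal} left ${\rm prj}\mbox{-}\La$-approximations, which is exactly the assumption needed in any subsequent refinement.
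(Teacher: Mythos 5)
Your proposal is correct and coincides with the paper's own argument: the paper likewise factors the top component through $\delta$, uses the universal property of the push-out to produce a map $U\rt Q'$ (their $l$, your $\mu$), and then factors that map through the left ${\rm prj}\mbox{-}\La$-approximation $d$, exactly in the order you describe. Your added preliminary check that $(\delta,d\la)$ is a morphism in $\CH$ and your remark on why minimality is not at issue are harmless elaborations of the same proof.
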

\begin{proof}
Let  $\left(\begin{smallmatrix} \sigma_1 \\ \sigma_2 \end{smallmatrix}\right):\left(\begin{smallmatrix} N\\ M\end{smallmatrix}\right)_{g}\rt \left(\begin{smallmatrix} P_0\\ P_1\end{smallmatrix}\right)_{s}$  be a morphism in $\CH$ with $P_0$ and $P_1$ projective modules. Since $\delta$ is a left $\CP$-approximation, there exists a morphism $a:Q_0\rt P_0$ such that $a\delta=\sigma_1.$ Moreover, the push-down property implies that  the morphism $l:U\rt P_1$ satisfies the following commutative diagram:
 \[\xymatrix{&
        		N\ar[r]^{\delta}\ar[d]^g
         & Q_0 \ar[d]^h\ar[rdd]^{sa}
        			 &     \\ &M\ar[r]^{\la}\ar[rrd]_{\sigma_2}&U\ar@{.>}[dr]^<<<<<l&\\ &&&P_1&        }\]

 Again, using the fact  that $d$ is a left $\CP$-approximation, we obtain a morphism $r:Q\rt P_1$ such that $rd=l$. Then,   the morphism $$\left(\begin{smallmatrix} a \\ r \end{smallmatrix}\right):\left(\begin{smallmatrix} Q_0\\ Q\end{smallmatrix}\right)_{dh}\rt \left(\begin{smallmatrix} P_0\\ P_1\end{smallmatrix}\right)_{s}$$
    give us the desired factorization.
\end{proof}


In order to obtain  a precise dual of Proposition \ref{approx-cp},  we need to impose  additional assumption on $\La.$

\begin{proposition}\label{left-approximation-3.10}
Let  $\La$ be a selfinjective algebra and let $(N\st{f}\rt M)$ be  an object in $\CH$. Assume $0 \rt N \st{\sigma}\rt I_0\st{g}\rt I_1$ is a projective copresentation of $M$, and $0 \rt M \st{e}\rt I  $ is a monomorphism with a projective  module I. Further,  consider the following commutative diagram such that the rightmost square  is pushout of $f $ along $\sigma$, and
  \[\xymatrix{0\ar[r]&N \ar[r]^{\sigma} \ar[d]^{f} & I_0\ar[r]^{p} \ar[d]^r & \Omega^{-1}_{\La}(N) \ar[r] \ar@{=}[d]\ar & 0 \\0 \ar[r]&
        		M\ar[r]^{j}\ar[d]^e
         & U \ar[r]^{h}\ar[d]^d\ar@{.>}[dl]^{e'}
        			 & \Omega^{-1}(N) \ar[r]\ar[d]^i & 0  \\0\ar[r] &I\ar[r]_{[1~~0]^t}& I\oplus I_1\ar[r]_{[0~~1]}& I_1\ar[r]& 0
        }\]
\noindent
where $g=ip$ an epi-mono factorization of $g$, and $e':U\rt I$ an extension of the morphism $e$, that is, $e'j=e.$ Then

 $$\left(\begin{smallmatrix} \sigma\\ [e~~0]^t\end{smallmatrix}\right):\left(\begin{smallmatrix} N\\ M\end{smallmatrix}\right)_{f}\rt \left(\begin{smallmatrix} I_0\\ I \oplus I_1\end{smallmatrix}\right)_{dr} $$
is a left $\CP$-approximation.
\end{proposition}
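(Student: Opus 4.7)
The plan is to exploit the self-injective hypothesis on $\La$, which forces $\prj\La=\inj\La$. Consequently every projective module is also injective, so any morphism into $P_0, P_1\in\prj\La$ extends along an arbitrary monomorphism. The relevant monomorphisms are $\sigma:N\rt I_0$, $e:M\rt I$, and $i:\Omega^{-1}_{\La}(N)\rt I_1$. Before checking the approximation property I would first verify that $(\sigma,[e~~0]^t)$ is genuinely a morphism in $\CH$: combining $r\sigma=jf$ (pushout), $e'j=e$, and $p\sigma=0$ (exactness of the top row), one computes $dr\sigma=[e'r\sigma,\ ihr\sigma]^t=[e'jf,\ ip\sigma]^t=[ef,\ 0]^t=[e~~0]^t f$. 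Since the codomain visibly lies in $\CP$, only the approximation property requires work.

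Given a test morphism $(\sigma_1,\sigma_2):(N\st{f}\rt M)\rt(P_0\st{s}\rt P_1)$ with $P_0, P_1\in\prj\La$, I would build a factorization $(a,b)\circ(\sigma,[e~~0]^t)=(\sigma_1,\sigma_2)$ in which $b=[b_1~~b_2]:I\oplus I_1\rt P_1$. First, since $P_0$ is injective and $\sigma$ is mono, extend $\sigma_1$ along $\sigma$ to obtain $a:I_0\rt P_0$ with $a\sigma=\sigma_1$. Second, since $P_1$ is injective and $e$ is mono, extend $\sigma_2$ along $e$ to obtain $b_1:I\rt P_1$ with $b_1 e=\sigma_2$. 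This already secures $b\cdot[e~~0]^t=b_1 e=\sigma_2$ regardless of the choice of $b_2$, so the only remaining equation is $sa=b\cdot dr=b_1 e'r+b_2 g$, i.e.\ $b_2 g=sa-b_1 e'r$.

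The crux is to realise this last equation. I would show that the right-hand side annihilates the image of $\sigma$: using $a\sigma=\sigma_1$, $r\sigma=jf$, $e'j=e$, $b_1 e=\sigma_2$, and the morphism condition $s\sigma_1=\sigma_2 f$, one obtains
\[
(sa-b_1 e'r)\sigma \;=\; s\sigma_1 - b_1 ef \;=\; \sigma_2 f - \sigma_2 f \;=\; 0.
\]
Thus $sa-b_1 e'r$ factors through $\Coker(\sigma)=\Omega^{-1}_{\La}(N)$, say $cp=sa-b_1 e'r$ for some $c:\Omega^{-1}_{\La}(N)\rt P_1$. Finally, since $i:\Omega^{-1}_{\La}(N)\rt I_1$ is mono and $P_1$ is injective, $c$ extends along $i$ to a morphism $b_2:I_1\rt P_1$ with $b_2 i=c$, so that $b_2 g=b_2 ip=cp=sa-b_1 e'r$, as required.

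The main technical obstacle is the cokernel-obstruction step: verifying $(sa-b_1 e'r)\sigma=0$ must be deduced purely from the defining equations of the pushout and of the extension $e'$, and this is where the compatibility of the three separate extensions is really tested. Beyond that, the argument reduces to three routine appeals to injectivity of the projectives $P_0$ and $P_1$; dropping self-injectivity would block precisely the first of these extensions and force stronger hypotheses on the chosen copresentation.
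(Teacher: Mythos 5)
Your proof is correct. Note that the paper states this proposition without proof, presenting it as the ``precise dual'' of Proposition \ref{approx-cp}; the intended argument, dualizing that proof, would be to place $\left(\begin{smallmatrix} \sigma\\ [e~~0]^t\end{smallmatrix}\right)$ inside a short exact sequence in $\CH$ and show that its cokernel term is Ext-orthogonal to $\CP$ (mirroring the verification there that the kernel term lies in $\CP^{\perp}$). You instead verify the lifting property directly: given a test morphism $(\sigma_1,\sigma_2)$ into $(P_0\st{s}\rt P_1)$, you use self-injectivity three times --- to extend $\sigma_1$ along the monomorphism $\sigma$, to extend $\sigma_2$ along $e$, and to extend the induced map on $\Omega^{-1}_{\La}(N)$ along $i$ --- with the key computation being that $(sa-b_1e'r)\sigma=0$, so that the defect factors through $p$ and can be absorbed into the $I_1$-component. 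All the identities you invoke ($r\sigma=jf$, $hr=p$, $e'j=e$, $s\sigma_1=\sigma_2 f$) do follow from the stated diagram, so the factorization is complete. Your route is more elementary and self-contained (no need to identify the cokernel object in $\CH$ or to compute an $\Ext$ group), and it has the added virtue of isolating exactly where the self-injective hypothesis is used, which substantiates the paper's remark that an extra assumption on $\La$ is needed for this dual statement.
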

The next result is a dual  of Proposition \ref{Prop-codm-prj}.
\begin{proposition}
    Let $(P\st{g}\rt M)$ be an object in $\CH$ with $P$ a projective module. Further, let $h:M\rt Q$ be a minimal left ${\rm prj}\mbox{-}\La$-approximation. Then
     $$\left(\begin{smallmatrix} 1\\ h \end{smallmatrix}\right):\left(\begin{smallmatrix} P\\ M\end{smallmatrix}\right)_{g}\rt \left(\begin{smallmatrix} P\\ Q\end{smallmatrix}\right)_{hg} $$
      is a minimal left $\CP$-approximation.
\end{proposition}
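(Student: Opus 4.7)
The plan is to directly verify the two defining properties---being a left $\CP$-approximation and being left minimal---by reducing each one to the corresponding property of $h$ in $\mmod\La$. Before starting, I would note that the target $(P\st{hg}\rt Q)$ really lies in $\CP$, since $Q$ is projective by the hypothesis on $h$, and that $(1,h)$ is manifestly a morphism in $\CH$ because $h\circ g=(hg)\circ 1$.

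For the approximation property, I would take an arbitrary morphism $(a,b):(P\st{g}\rt M)\rt(P'\st{s}\rt Q')$ with $(P'\st{s}\rt Q')\in\CP$, so that both $P'$ and $Q'$ are projective. Applying the left ${\rm prj}\mbox{-}\La$-approximation property of $h$ to the map $b:M\rt Q'$, I would obtain $c:Q\rt Q'$ with $ch=b$. I would then check that $(a,c)$ does define a morphism $(P\st{hg}\rt Q)\rt(P'\st{s}\rt Q')$ in $\CH$; this amounts to the identity $c(hg)=(ch)g=bg=sa$, where the last equality uses that $(a,b)$ is a morphism in $\CH$. By construction $(a,c)\circ(1,h)=(a,ch)=(a,b)$, giving the required factorization.

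For minimality, I would suppose $(\alpha,\beta)$ is an endomorphism of $(P\st{hg}\rt Q)$ satisfying $(\alpha,\beta)\circ(1,h)=(1,h)$. Comparing first components gives $\alpha=1$, and comparing second components gives $\beta h=h$. Since $h$ is minimal as a left approximation, $\beta$ must be an automorphism of $Q$; hence $(\alpha,\beta)=(1,\beta)$ is an automorphism in $\CH$.

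The argument is essentially a dualization of Proposition \ref{Prop-codm-prj}, so I would not expect any serious obstacle; the only care needed is invoking the approximation property of $h$ in the correct direction and verifying that the target of $(1,h)$ sits in $\CP$ rather than merely in $\CH$. Unlike Propositions \ref{approx-cp} and \ref{approx-cp2}, no pullback or pushout construction is required here, because the first component of the source is already projective.
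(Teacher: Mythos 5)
Your proof is correct. The minimality half is exactly the paper's argument: compose with $(1,h)$, read off $r_1=1$ and $r_2h=h$ from the two components, and invoke left minimality of $h$. Where you differ is in the approximation half. The paper simply cites Proposition \ref{left-approximation-3.10} for the fact that $(1,h)$ is a left $\CP$-approximation, whereas you verify it directly: given $(a,b)$ into an object $(P'\st{s}\rt Q')$ of $\CP$, you lift $b$ through $h$ to get $c$ with $ch=b$ and keep $a$ unchanged, which is possible precisely because the first component $P$ of the source is already projective, so no modification (and in particular no pushout as in Proposition \ref{approx-cp2}) is needed there. Your direct check is more self-contained and arguably preferable: Proposition \ref{left-approximation-3.10} is stated under the hypothesis that $\La$ is selfinjective, a hypothesis absent from the present statement, so the paper's citation is at best imprecise, while your elementary verification works for an arbitrary Artin algebra. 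Your observation that $(P\st{hg}\rt Q)$ lands in $\CP$ because $Q$ is projective is also a point the paper leaves implicit.
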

\begin{proof}
  Proposition \ref{left-approximation-3.10}  implies that  the morphism $(1, h)$ is a left $\CP$-approximation. Thus, it is enough to prove that  it is minimal. Let $(r_1, r_2):(P\st{hg}\rt Q)\rt (P\st{hg}\rt Q)$ be an endomorphism with $(r_1, r_2)\circ (1, h)=(r_1, r_2)$. The following commutative diagram records  all the relevant morphisms in $\mmod \La$:
\begin{equation*}
    \begin{gathered}
      \xymatrix@!=0.1pc{ {} & P \, \ar@{>}[dd]^-g\ar@{=}[dl]
          \ar@{=}[rr] & &  P\ar[dd]^->>>>>>{hg}\ar[llld]^>>>>>>>>>>>{r_1}
                    & &  \\
          P  \ar[dd]^->>>>>>>>{hg}  & &
                    & &          \\
          {} & M \ar[rr]^-{h}\ar[dl]_(0.40){h} & &    Q\ar[llld]^>>>>>>>>>>>{r_2}
          & &           \\       Q         & &  & &
                  }
    \end{gathered}
    \end{equation*}
  As the diagram shows, we have $r_1=1$ and $r_2h=h$. Since  $h$ is a  minimal morphism, it  follows that $r_2$ is an isomorphism. Consequently, $(r_1, r_2)$ is an isomorphism.
\end{proof}

\section{From  the   Ausalnder-Reiten theory of $\CP(\La)$ to   $\mmod \La$  }\label{Section4}

The notion of  being $0$-Auslander has recently been defined for   extriangulated categories in \cite{GNP2}. This inspired us to define the concept of $0$-Auslander exact categories. We will demonstrate  that there exists  an injection  from Morita equivalence classes of Artin algebras to equivalence classes of $0$-Auslander exact categories. We will establish  that the cokernel functor behaves well respect to almost split sequences. As a result, we establish a  close connection between the AR-quiver of $\CP$ and $\La.$ In the reminder of this section, we will study the morphism category of injective modules. This enables us to view some known equivalences  in terms of morphism categories.

 \begin{definition} \label{def:0-Auslander}
 An exact category $\CE$ is 0-Auslander if
 \begin{itemize}
  \item [$(i)$] it has enough projectives;
  \item [$(ii)$] it is hereditary:  for all $X \in \CE$ there exists a short exact sequence $0 \rt P_1\rt P_0\rt X\rt 0,$
  where $P_0,P_1$ are projective;
  \item  [$(iii)$]it has dominant dimension at least 1: for any projective object $P$, there is a short exact sequence $0 \rt P \rt Q \rt I\rt 0,$  where $I$ is injective and $Q$ is projective-injective.
 \end{itemize}
\end{definition}
We will show  below that the morphism  categories of  projective modules provide a large source for  $0$-Auslander exact categories.

The cokernel functor ${\rm Cok}:\CP(\La)\rt \mmod \La$
 is defined  by sending an object $(P\st{f}\rt Q)$ in $\CP(\La)$ to ${\rm Cok}(f )$. It induces an equivalence as stated in the following theorem. For a proof, we refer to \cite[Theorem 4.2]{EH} or \cite[Proposition 3.3]{Ba}.

\begin{theorem}\label{Thm-Cok-equiv}

The functor ${\rm Cok}:\CP(\La) \lrt \mmod \La$ is full, dense and objective.  Then, the cokernel functor  makes the following diagram commute.
\[\xymatrix{
\CP(\La)\ar[r]^{\rm Cok}\ar[d]^{\pi} & \mmod \La\\
\frac{\CP(\La)}{\CV}\ar[ur]_{\overline{\rm Cok}}   \\
}\]
\noindent
Here, $\pi$ is the natural quotient map and $\CV$ is the full subcategory of $\CP(\La)$ generated by all finite direct sums of objects of  the forms $(P\st{1}\rt P)$ or $(P\rt 0)$, where  $P$ is a projective module.
\end{theorem}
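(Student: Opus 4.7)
The plan is to verify the three properties---density, fullness, and objectivity of $\mathrm{Cok}$---separately, after which the commutative triangle follows formally from the universal property of the factor category $\CP(\La)/\CV$.

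Density is immediate: given $M \in \mmod \La$, a projective presentation $P_1 \xrightarrow{f} P_0 \to M \to 0$ yields an object of $\CP(\La)$ whose cokernel is $M$. For fullness, let $\phi: \mathrm{Cok}(f) \to \mathrm{Cok}(g)$ be any morphism in $\mmod\La$, with $(P_1 \xrightarrow{f} P_0)$ and $(Q_1 \xrightarrow{g} Q_0)$ in $\CP(\La)$. Using projectivity of $P_0$ I lift $\phi$ composed with the canonical surjection to a map $\sigma_0: P_0 \to Q_0$; then $g$-image contains $\sigma_0 f$, and projectivity of $P_1$ produces $\sigma_1: P_1 \to Q_1$ with $g\sigma_1 = \sigma_0 f$. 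The pair $(\sigma_1,\sigma_0)$ is the desired lift.

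The heart of the proof is objectivity, which I would split into two parts. First, I identify the objects killed by $\mathrm{Cok}$: if $\mathrm{Cok}(f)=0$ then $f:P\to Q$ is epi, and since $Q$ is projective this is split, whence $(P\xrightarrow{f}Q)\cong(Q\xrightarrow{1}Q)\oplus(\ker f\to 0)$ with $\ker f$ projective; thus the kernel on objects lies in $\CV$, and the reverse inclusion is clear. Second, and this is the main technical step, I must show that any morphism $(\sigma_1,\sigma_0):(P\xrightarrow{f}Q)\to(P'\xrightarrow{g}Q')$ whose induced map on cokernels vanishes factors through an object of $\CV$. The vanishing forces $\sigma_0(Q)\subseteq\mathrm{Im}(g)$, so by projectivity of $Q$ there is $t:Q\to P'$ with $gt=\sigma_0$. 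Using this $t$, I would write down the factorization
\[
\left(\begin{smallmatrix} P\\ Q\end{smallmatrix}\right)_{f}
\;\xrightarrow{\;\left(\begin{smallmatrix}\left[\begin{smallmatrix}f\\ 1\end{smallmatrix}\right]\\ 1\end{smallmatrix}\right)\;}\;
\left(\begin{smallmatrix} Q\oplus P\\ Q\end{smallmatrix}\right)_{[1~~0]}
\;\xrightarrow{\;\left(\begin{smallmatrix}[t~~\sigma_1-tf]\\ \sigma_0\end{smallmatrix}\right)\;}\;
\left(\begin{smallmatrix} P'\\ Q'\end{smallmatrix}\right)_{g},
\]
noting that the middle object is $(Q\xrightarrow{1}Q)\oplus(P\to 0)\in\CV$. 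The commutativity of the squares is a direct verification (in particular, $g(\sigma_1-tf)=g\sigma_1-\sigma_0 f=0$), and composing gives back $(\sigma_1,\sigma_0)$.

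Once these three properties are in hand, the ideal of morphisms factoring through $\CV$ coincides with the kernel of $\mathrm{Cok}$, so $\mathrm{Cok}$ descends uniquely through $\pi$ to a full, faithful and dense functor $\overline{\mathrm{Cok}}:\CP(\La)/\CV\to\mmod\La$, making the triangle commute by construction. The main obstacle is clearly pinpointing the correct intermediate object $(Q\oplus P\xrightarrow{[1~~0]} Q)$ in $\CV$ and the correct components $t$ and $\sigma_1-tf$ for the factorization; everything else reduces to standard lifting arguments and split-epimorphism decompositions.
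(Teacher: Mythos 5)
Your proof is correct. Note that the paper itself gives no argument for this theorem, deferring instead to \cite[Theorem 4.2]{EH} and \cite[Proposition 3.3]{Ba}; your write-up is a complete, self-contained version of that standard argument, and the key step --- the explicit factorization of a cokernel-killed morphism through $\left(\begin{smallmatrix} Q\\ Q\end{smallmatrix}\right)_{1}\oplus\left(\begin{smallmatrix} P\\ 0\end{smallmatrix}\right)_{0}$ via the lift $t:Q\rt P'$ of $\sigma_0$ through $g$ and the correction term $\sigma_1-tf$ --- checks out, as does the identification of the kernel objects with $\CV$ via the splitting of the epimorphism $f$ onto the projective $Q$.
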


\begin{lemma}
    The exact category $\CP(\La)$ is $0$-Auslander.
\end{lemma}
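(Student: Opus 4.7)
The plan is to verify the three axioms of Definition \ref{def:0-Auslander} directly using the classification of indecomposable projectives and injectives in $\CP(\La)$ provided by Lemma \ref{proj-inde-p}, together with the two canonical short exact sequences displayed in the proof of that lemma.

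First, I would establish conditions (i) and (ii) simultaneously. The first short exact sequence in the proof of Lemma \ref{proj-inde-p} shows that every object $(P\st{d}\rt Q)$ sits in a short exact sequence
\[
0 \rt \bigl(\begin{smallmatrix}0\\ P\end{smallmatrix}\bigr)_0 \rt \bigl(\begin{smallmatrix}0\\ Q\end{smallmatrix}\bigr)_0 \oplus \bigl(\begin{smallmatrix}P\\ P\end{smallmatrix}\bigr)_1 \rt \bigl(\begin{smallmatrix}P\\ Q\end{smallmatrix}\bigr)_d \rt 0
\]
whose middle and left-hand terms are projective by Lemma \ref{proj-inde-p}(1). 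This simultaneously yields enough projectives and shows that every object admits a projective resolution of length one, i.e. $\CP(\La)$ is hereditary.

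For condition (iii), I reduce to the indecomposable projectives by additivity. From Lemma \ref{proj-inde-p}(1), every indecomposable projective of $\CP(\La)$ is either of the form $(P\st{1}\rt P)$ or of the form $(0\rt P)$ for $P$ an indecomposable projective $\La$-module. In the first case, $(P\st{1}\rt P)$ is already projective-injective by Lemma \ref{proj-inde-p}, so the split sequence $0\rt (P\st{1}\rt P)\st{1}\rt (P\st{1}\rt P)\rt 0\rt 0$ suffices. In the second case, I propose the short exact sequence
\[
0 \rt \bigl(\begin{smallmatrix}0\\ P\end{smallmatrix}\bigr)_0 \xrightarrow{(0,\,1)} \bigl(\begin{smallmatrix}P\\ P\end{smallmatrix}\bigr)_1 \xrightarrow{(1,\,0)} \bigl(\begin{smallmatrix}P\\ 0\end{smallmatrix}\bigr)_0 \rt 0,
\]
whose exactness is an easy direct verification. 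Here the middle term is the projective-injective $(P\st{1}\rt P)$ and the right-hand term is injective, again by Lemma \ref{proj-inde-p}(2).

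I do not expect a serious obstacle: once the correct presentations of projective and injective objects are identified via Lemma \ref{proj-inde-p}, everything reduces to exhibiting the two explicit short exact sequences above. The only subtle point is to remember that axiom (iii) requires the middle term to be projective-\emph{injective}, not merely projective, which forces the use of the objects $(P\st{1}\rt P)$ rather than arbitrary projectives; this is precisely why the second exact sequence is tailored to produce $(P\st{1}\rt P)$ in the middle.
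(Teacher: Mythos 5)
Your proposal is correct and follows essentially the same route as the paper: conditions (i) and (ii) via the canonical sequence $0 \rt \bigl(\begin{smallmatrix}0\\ P\end{smallmatrix}\bigr)_0 \rt \bigl(\begin{smallmatrix}0\\ Q\end{smallmatrix}\bigr)_0 \oplus \bigl(\begin{smallmatrix}P\\ P\end{smallmatrix}\bigr)_1 \rt \bigl(\begin{smallmatrix}P\\ Q\end{smallmatrix}\bigr)_d \rt 0$, and condition (iii) via $0 \rt \bigl(\begin{smallmatrix}0\\ P\end{smallmatrix}\bigr)_0 \rt \bigl(\begin{smallmatrix}P\\ P\end{smallmatrix}\bigr)_1 \rt \bigl(\begin{smallmatrix}P\\ 0\end{smallmatrix}\bigr)_0 \rt 0$, using Lemma \ref{proj-inde-p} to identify projectives and projective-injectives. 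The only difference is your explicit (and harmless) case split for the already projective-injective indecomposables $(P\st{1}\rt P)$, which the paper leaves implicit.
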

\begin{proof}
Let ${\rm P}=(P\st{f}\rt Q)$ be an object in $\CP(\La)$.  We have  the following short exact sequence:
   $$\xymatrix@1{  0\ar[r] & {\left(\begin{smallmatrix} 0\\ P\end{smallmatrix}\right)}_{0}
			\ar[r]
			&  {\left(\begin{smallmatrix}0\\Q\end{smallmatrix}\right)}_{0}\oplus {\left(\begin{smallmatrix}P\\P\end{smallmatrix}\right)}_{1} \ar[r]&
			{\left(\begin{smallmatrix}P\\ Q\end{smallmatrix}\right)}_{f}\ar[r]& 0. } \ \    $$	
   By applying  the classification of projective objects in  $\CP(\La)$ given in Lemma \ref{proj-inde-p}, the above short exact sequence implies that ${\rm P}$ has projective dimension at most  one in the exact category $\CP$. Hence $\CP$ is hereditary. Moreover, the short exact sequence implies that $\CP$  has enough projectives,  satisfying the conditions $(i)$ and $(ii)$ of Definition \ref{def:0-Auslander}.  Lemma \ref{proj-inde-p} follows that every projective-injective object in $\CP$ is isomorphic to $(P\st{1}\rt P)$ for some projective module $P$. We also have the following short exact sequence in $\CP$:
   $$\xymatrix@1{  0\ar[r] & {\left(\begin{smallmatrix} 0\\ P\end{smallmatrix}\right)}_{0}
			\ar[r]
			&  {\left(\begin{smallmatrix}P\\P\end{smallmatrix}\right)}_{1} \ar[r]&
			{\left(\begin{smallmatrix}P\\ 0\end{smallmatrix}\right)}_{0}\ar[r]& 0. } \ \    $$	
 Thus,  the exact category $\CP(\La)$ satisfies the condition $(iii)$ as well. Therefore, we have shown that $\CP(\La)$ is $0$-Auslander.
 \end{proof}

\begin{proposition}\label{Prop. injection}
    Let $\La$ and $\La'$ be artin algebras. If $\CP(\La)\simeq \CP(\La')$, as exact categories, then $\mmod \La\simeq \mmod \La'$
\end{proposition}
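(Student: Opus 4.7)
The plan is to upgrade Theorem~\ref{Thm-Cok-equiv} to a genuine equivalence $\CP(\La)/\CV \xrightarrow{\sim}\mmod\La$ and then to give an intrinsic, exact-categorical characterization of the subcategory $\CV$, so that any exact equivalence $F\colon\CP(\La)\to\CP(\La')$ must send $\CV$ to $\CV'$ and hence descend to the quotients.

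First I would observe that the cokernel functor being full, dense, and objective (in the sense of Bautista--Rump) with kernel-on-objects contained in $\CV$ forces $\overline{\mathrm{Cok}}\colon\CP(\La)/\CV\to\mmod\La$ to be an equivalence; this is standard once fullness, density, and the objective condition are in hand, and is already recorded in the sources cited for Theorem~\ref{Thm-Cok-equiv}. The same statement applies verbatim to $\La'$, so it suffices to produce an equivalence $\CP(\La)/\CV\simeq\CP(\La')/\CV'$.

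The crucial step is the intrinsic characterization: \emph{$\CV$ is exactly the full subcategory of injective objects of the exact category $\CP(\La)$}. By Lemma~\ref{proj-inde-p}(2), the indecomposable injectives of $\CP(\La)$ are precisely the objects $(P\xrightarrow{1}P)$ and $(P\to 0)$ for $P$ indecomposable projective, which is exactly the generating list for $\CV$. Since $\CP(\La)$ is Krull--Schmidt (it is a functorially finite extension-closed subcategory of $\mmod T_2(\La)$), every injective decomposes as a finite direct sum of these indecomposables, so $\CV$ coincides with the additive subcategory of injectives. Being injective is purely an exact-categorical property (every admissible monomorphism out of the object splits, equivalently $\mathrm{Ext}^{1}_{\CP(\La)}(-,I)=0$), so it is preserved by any exact equivalence. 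Therefore an exact equivalence $F\colon\CP(\La)\to\CP(\La')$ restricts to an equivalence $\CV\to\CV'$ and hence induces an equivalence of ideal quotients $\overline{F}\colon\CP(\La)/\CV\xrightarrow{\sim}\CP(\La')/\CV'$. Composing the equivalences
\[
\mmod\La\;\simeq\;\CP(\La)/\CV\;\xrightarrow{\overline{F}}\;\CP(\La')/\CV'\;\simeq\;\mmod\La'
\]
gives the desired conclusion.

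The main obstacle I anticipate is the intrinsic identification of $\CV$: one must verify that nothing outside the additive closure of $\{(P\xrightarrow{1}P),(P\to 0)\}$ is injective in $\CP(\La)$ (handled by Lemma~\ref{proj-inde-p}(2) together with Krull--Schmidt), and that ``injective'' genuinely transports under $F$. A secondary technicality is ensuring that $\overline{\mathrm{Cok}}$ is not merely essentially surjective and full (which is immediate from Theorem~\ref{Thm-Cok-equiv}) but also faithful; this is precisely what the objective condition buys, since it guarantees that every morphism killed by $\mathrm{Cok}$ factors through an object of $\CV$.
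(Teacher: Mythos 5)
Your proof is correct and follows essentially the same route as the paper: identify $\CV$ (and $\CV'$) with the additive subcategory of injective objects of the exact category, note that an exact equivalence $F$ must preserve injectives so that $F(\CV)=\CV'$ and $F$ descends to the ideal quotients, and then compose with the equivalences $\CP(\La)/\CV\simeq\mmod\La$ and $\CP(\La')/\CV'\simeq\mmod\La'$ furnished by Theorem~\ref{Thm-Cok-equiv}. Your write-up is somewhat more explicit than the paper's about why $\CV$ is intrinsically characterized (via Lemma~\ref{proj-inde-p}(2) and Krull--Schmidt) and why $\overline{\rm Cok}$ is faithful, but the argument is the same.
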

 \begin{proof}
     Let $F:\CP(\La)\rt \CP(\La')$ be an exact functor that gives an equivalence between the exact categories $\CP(\La)$ and $\CP(\La')$. Let $\CV$ and  $\CV'$ be the subcategories of  $\CP(\La)$ and  $\CP(\La')$, respectively, generated by injective objects.
     Since $F$ preserves injective objects, we have $F(\CV)=\CV'$. Consequently,  the equivalence $F$ induces an equivalence between quotient categories $\CP(\La)/\CV$ and $\CP(\La')/\CV'.$ Applying Theorem \ref{Thm-Cok-equiv},  we obtain an equivalence between $\mmod \La$ and $\mmod \La'.$
 \end{proof}

\subsection{The cokernel functor}

\begin{lemma}\label{Lemma-left-right-CP}
Let  $\left(\begin{smallmatrix} \phi_1\\ \phi_2\end{smallmatrix}\right):\left(\begin{smallmatrix} P_1\\ P_0\end{smallmatrix}\right)_{f}\rt \left(\begin{smallmatrix} Q_1\\ Q_0\end{smallmatrix}\right)_{g} $ be a morphism in $\CP$. Set $\phi=\left(\begin{smallmatrix} \phi_1\\ \phi_2\end{smallmatrix}\right)$, ${\rm P}=\left(\begin{smallmatrix} P_1\\ P_0\end{smallmatrix}\right)_f, {\rm Q}=\left(\begin{smallmatrix} Q_1\\ Q_0\end{smallmatrix}\right)_g$. Then
\begin{itemize}
    \item [$(1)$] If $\phi$ is right  almost split in $\CP$, then so is ${\rm Cok} (\phi):{\rm Cok}({\rm P})\rt {\rm Cok} ({\rm Q})$ in $\mmod \La$, provided ${\rm Cok}(\phi)$ is not a retraction;
    \item [$(2)$]If $\phi$ is left  almost split in $\CP$, then so is ${\rm Cok} (\phi):{\rm Cok}({\rm P})\rt {\rm Cok} ({\rm Q})$ in $\mmod \La,$
    provided ${\rm Cok}(\phi)$ is not a section.
\end{itemize}
\end{lemma}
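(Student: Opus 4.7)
The plan is to exploit Theorem \ref{Thm-Cok-equiv}, which says the cokernel functor is full and dense. Combined with the elementary observation that any functor preserves retractions and sections, this will let us lift a morphism in $\mmod\La$ to a morphism in $\CP$ that still has the relevant non-retraction (resp.\ non-section) property, then transport the almost-split factorization produced in $\CP$ back down through ${\rm Cok}$.

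I sketch $(1)$; part $(2)$ is dual. Let $h:N\to {\rm Cok}({\rm Q})$ be a non-retraction in $\mmod\La$. By density of ${\rm Cok}$, there is ${\rm N}'\in\CP$ together with an isomorphism $\alpha:{\rm Cok}({\rm N}')\cong N$; concretely one can take ${\rm N}'=(P_1'\to P_0')$ arising from a projective presentation of $N$. By fullness of ${\rm Cok}$, the composite $h\circ\alpha$ is the image of some morphism $\tilde h:{\rm N}'\to {\rm Q}$ in $\CP$. If $\tilde h$ were a retraction in $\CP$, then ${\rm Cok}(\tilde h)=h\alpha$ would be a retraction in $\mmod\La$, and so would $h$, contradicting the hypothesis. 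Hence $\tilde h$ is not a retraction, and the right almost split property of $\phi$ yields $s:{\rm N}'\to {\rm P}$ with $\tilde h=\phi\circ s$. Applying ${\rm Cok}$ and composing with $\alpha^{-1}$ gives $h={\rm Cok}(\phi)\circ\bigl({\rm Cok}(s)\alpha^{-1}\bigr)$, which is the required factorization through ${\rm Cok}(\phi)$.

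Part $(2)$ is entirely analogous: starting from a non-section $h:{\rm Cok}({\rm P})\to N$, choose ${\rm N}'$ with ${\rm Cok}({\rm N}')\cong N$, use fullness to lift $\alpha^{-1}h$ to $\tilde h:{\rm P}\to {\rm N}'$, observe that any ${\rm Cok}$-lift of a non-section must itself be a non-section (else ${\rm Cok}$ would produce a section witness for $h$), and apply the left almost split property of $\phi$ to factor $\tilde h$ as $s\circ\phi$. The hypotheses that ${\rm Cok}(\phi)$ itself is not a retraction (resp.\ not a section) are used only to guarantee that the conclusion is a genuine almost-split condition and not a trivial one; they play no role in the factorization step. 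The one delicate ingredient, and arguably the main obstacle worth flagging, is that one genuinely needs fullness (not merely essential surjectivity) of ${\rm Cok}$ to produce the lift $\tilde h$; this is precisely what Theorem \ref{Thm-Cok-equiv} provides, so nothing further is required.
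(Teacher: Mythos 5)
Your proposal is correct and follows essentially the same route as the paper: the paper realizes your "lift via fullness of ${\rm Cok}$" concretely by choosing a minimal projective presentation of the test module and lifting the test map through projectivity, then argues exactly as you do that the lift cannot be a retraction (resp.\ section) and pushes the almost-split factorization back down with ${\rm Cok}$. No gaps.
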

\begin{proof}
We only prove $(1)$ and note that  the proof of $(2)$ is similar. Assume $h:M\rt {\rm Cok}({\rm Q})$ is not a retraction in $\mmod \La.$  We take a minimal projective presentation $P\st{h}\rt Q\st{\sigma}\rt  M\rt 0$. Then there exist  morphisms making the following diagram commutative:

	$$\xymatrix{		
		&	P\ar@{.>}[d]^{\beta_1} \ar[r]^{h} & Q
			\ar@{.>}[d]^{\beta_0} \ar[r]^{\sigma} & M \ar[d]^h\ar[r] &0 \\ 	&	Q_1\ar[r]^{g} & Q_0
			\ar[r] &  {\rm Cok}(Q) \ar[r]&0 }	$$
The above diagram gives us the morphism  $\beta=\left(\begin{smallmatrix} \beta_1\\ \beta_0\end{smallmatrix}\right):\left(\begin{smallmatrix} P\\ Q\end{smallmatrix}\right)_{h}\rt \left(\begin{smallmatrix} Q_1\\ Q_0\end{smallmatrix}\right)_{g}$. We claim that that $\beta=(\beta_1, \beta_0)$ is not a retraction. Otherwise, if there is $\alpha=\left(\begin{smallmatrix} \alpha_1\\ \alpha_0\end{smallmatrix}\right):\left(\begin{smallmatrix} Q_1\\ Q_0\end{smallmatrix}\right)_{g}\rt \left(\begin{smallmatrix} P\\ Q\end{smallmatrix}\right)_{h}$ with $\beta \alpha=1_{{\rm Q}}$, then ${\rm Cok}(\beta){\rm Cok}(\alpha)=h{\rm Cok}(\alpha)= {\rm Cok}(1_{{\rm Q}})=1_{{\rm Cok}(Q)}$, which is  a contradiction. Since $\phi$ is a right almost split morphism in $\CP$, the morphism $\beta$ factors through $\phi$, i.e, there exists $\gamma:(P\st{h}\rt Q)\rt (P_1\st{f}\rt P_0)$ such that $\phi \gamma=\beta$. Applying the cokernel functor on this factorization in $\CP$,  we obtain  the desired factorization in $\mmod \La.$
\end{proof}

\begin{remark}
Even if  the morphism $\phi:\left(\begin{smallmatrix} \phi_1\\ \phi_2\end{smallmatrix}\right)$ is minimal in $\CP$, it is not necessarily  true that the ${\rm Cok}(\phi)$ is  minimal in $\mmod \La$. We can see an example of this in  Lemma \ref{righ-left-minimimal-proj-obj}, where   the morphism $\left(\begin{smallmatrix} ip\\ 1 \end{smallmatrix}\right):\left(\begin{smallmatrix} Q\\ P\end{smallmatrix}\right)_{ip}\rt \left(\begin{smallmatrix} P\\ P\end{smallmatrix}\right)_1 $ is showen to be  minimal in $\CP$, but its cokernel ${\rm Cok}\left(\begin{smallmatrix} ip\\ 1 \end{smallmatrix}\right): P/{\rm rad}(P) \rt 0 $ is not minimal in $\mmod \La.$
\end{remark}
\begin{lemma}\label{righ-left-minimimal-proj-obj}
Let $P$ be an indecomposable projective module, and let $p:Q\rt {\rm rad}(P)$ be  a projective cover of ${\rm rad} (P)$, $i:{\rm rad}(P)\rt P$ be a canonical inclusion map. Then the following hold:
\begin{itemize}
    \item [$(1)$] The morphism $\left(\begin{smallmatrix} 0\\ ip \end{smallmatrix}\right):\left(\begin{smallmatrix} 0\\ Q\end{smallmatrix}\right)_0\rt \left(\begin{smallmatrix} 0\\ P\end{smallmatrix}\right)_0 $ is  right minimal almost split in $\CP$.
    \item [$(2)$] The morphism $\left(\begin{smallmatrix} ip\\ 1 \end{smallmatrix}\right):\left(\begin{smallmatrix} Q\\ P\end{smallmatrix}\right)_{ip}\rt \left(\begin{smallmatrix} P\\ P\end{smallmatrix}\right)_1 $ is  right minimal almost split in $\CP$. Moreover, the object $\left(\begin{smallmatrix} Q\\ P\end{smallmatrix}\right)_{ip}$
     is indecomposable in $\CP.$\\
\end{itemize}
\end{lemma}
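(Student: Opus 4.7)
Both parts assert that a particular morphism in $\CP$ is a sink map, i.e.\ right minimal right almost split. Since the targets $(0\rt P)_0$ and $(P\rt P)_1$ are both projective in the exact category $\CP$ by Lemma~\ref{proj-inde-p}, no almost split sequence terminates at either, so the sink map must be exhibited directly. My plan for each part proceeds in three stages: (a) check that the candidate morphism is not a split epimorphism; (b) show that every non-split epimorphism into the target factors through it in $\CP$; (c) verify right minimality via the minimality of the projective cover $p$.

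For part $(1)$, stage (a) is immediate, since $\mathrm{im}(ip)=\rad(P)\subsetneq P$ prevents any section. For stage (b), take a non-retraction $(0,\psi)\colon (A\rt B)_g\rt (0\rt P)_0$ in $\CP$: the map $\psi\colon B\rt P$ cannot be surjective, because if it were, projectivity of $P$ would yield a section of $\psi$ and hence of $(0,\psi)$. Since $P$ is indecomposable projective (hence local), $\mathrm{im}(\psi)\subseteq \rad(P)$, so $\psi=i\psi'$ for some $\psi'\colon B\rt \rad(P)$; projectivity of $B$ lifts $\psi'$ through $p$ to some $\beta\colon B\rt Q$. The delicate step is to choose this lift so that $\beta g=0$, making $(0,\beta)\colon (A\rt B)_g\rt (0\rt Q)_0$ a legitimate morphism in $\CP$; this uses $\psi g=0$ (hence $\psi'g=0$) together with the fact that $p$ is a projective cover. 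Stage (c) follows because any endomorphism $(0,\tau)$ of $(0\rt Q)_0$ satisfying $(0,ip)(0,\tau)=(0,ip)$ yields $p\tau=p$ after cancelling the monic $i$, and the minimality of the projective cover $p$ forces $\tau$ to be an automorphism.

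For part $(2)$, a morphism $(\alpha,\beta)\colon (A\rt B)_g\rt (P\rt P)_1$ in $\CH$ must satisfy $\alpha=\beta g$, and it is split epi iff $\alpha$ is surjective (using projectivity of $P$). Hence $(ip,1)$ is not a retraction since $ip$ is not surjective. Given such a non-retraction $(\alpha,\beta)$, factor the non-surjective $\alpha$ through $\rad(P)$ as $\alpha=i\alpha'$ and lift $\alpha'$ through $p$, using projectivity of $A$, to $\sigma_1\colon A\rt Q$ with $ip\sigma_1=\alpha$; the pair $(\sigma_1,\beta)\colon (A\rt B)_g\rt (Q\rt P)_{ip}$ is automatically a $\CH$-morphism, as the commutativity $\beta g=ip\sigma_1$ holds because both sides equal $\alpha$. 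Right minimality follows as in part~$(1)$. To prove $(Q\rt P)_{ip}$ indecomposable, I analyse its endomorphism ring: any $(\sigma,\tau)\in \End_{\CP}((Q\rt P)_{ip})$ satisfies $\tau\cdot ip=ip\cdot\sigma$ and so induces an endomorphism of $\Coker(ip)=\mathrm{top}(P)$, which is simple because $P$ is indecomposable projective. The resulting ring homomorphism to the division ring $\End(\mathrm{top}(P))$ has kernel contained in the Jacobson radical of the endomorphism ring, making the latter local.

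The main obstacle is stage (b) of part~$(1)$, namely arranging $\beta g=0$. Any projective lift $\beta$ of $\psi'$ satisfies only $p\beta g=\psi' g=0$, so $\beta g$ lands in $\ker(p)\subseteq \rad(Q)$; one then modifies $\beta$ by an element of $\Hom(B,\ker p)$ to annihilate $\beta g$ on $A$. This reduces to a lifting problem along $g$ that should be resolvable using the projectivity of $B$ together with the structure of the projective cover $p$.
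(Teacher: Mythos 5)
Your part (1) has a genuine gap at exactly the step you flag, and that step cannot be closed: with source $(0\rt Q)_0$ the factorization property fails outright. Take $\La=k[x]/(x^2)$, so $P=Q=\La$, $\rad(P)=(x)$ and $ip=x\cdot\colon\La\rt\La$. The object $(\La\st{x}\rt\La)$ lies in $\CP$, and $(0,x\cdot)\colon(\La\st{x}\rt\La)\rt(0\rt\La)$ is a morphism (since $x^{2}=0$) which is not a retraction; a factorization through $(0,ip)$ would require $\beta\in\La$ with $\beta x=0$ and $x\beta=x$, and the first condition forces $\beta\in(x)$, whence $x\beta=0\neq x$. In your notation this is precisely the obstruction you describe: $\beta g$ lands in $\ker p=(x)$, but every $\gamma\colon B\rt\ker p$ here satisfies $\gamma g\in(x^{2})=0$, so $\beta$ cannot be corrected. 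The paper's proof proceeds differently: it writes the sink map as the composite of the minimal right almost split map $(0,i)\colon(0\rt\rad(P))\rt(0\rt P)$ in $\CH$ (from \cite[Lemma 3.1]{RS}) with the minimal right $\CP$-approximation of $(0\rt\rad(P))$. But by Lemma \ref{righ-app-certan-obj}(1) that approximation has source the minimal projective presentation object $(Q_1\st{f}\rt Q)$ of $\rad(P)$, not $(0\rt Q)$; the extra component $Q_1$ in the top row is exactly what absorbs the obstruction, since one lifts $\beta g$ through $f$ instead of killing it. So your difficulty is pointing at something real: the argument only goes through with $(Q_1\st{f}\rt Q)$ as source (in the example above, $(\La\st{x}\rt\La)$), which is also the source needed for the summand count in subsection \ref{FromArtoCP} to come out right.

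Part (2) is in better shape: there the commutativity constraint $\alpha=\beta g$ is automatic, so your lift $\sigma_1$ needs no correction and the factorization argument is complete; this is a more direct route than the paper's, which composes $(i,1)\colon(\rad(P)\st{i}\rt P)\rt(P\st{1}\rt P)$ with the approximation of Proposition \ref{Prop-codm-prj}. Your indecomposability proof via the endomorphism ring is also valid --- the kernel of $\End(Q\st{ip}\rt P)\rt\End({\rm top}(P))$ does lie in the radical, using that $\End(P)$ is local and that $p$ is a cover so that any endomorphism of $Q$ becoming surjective after composing with $p$ is an automorphism --- though the paper's argument (a direct summand $Q''$ of $Q$ with $p|_{Q''}=0$ must vanish) is shorter.
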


\begin{proof}
We prove $(1)$ and  the proof of the first part of  $(2)$ is similar. Note that the morphism $(0, ip)$ can be expressed as  the following composition:
$$\left(\begin{smallmatrix} 0\\ ip \end{smallmatrix}\right):\left(\begin{smallmatrix} 0\\ Q\end{smallmatrix}\right)_0 \st{{\left(\begin{smallmatrix} 0\\ p\end{smallmatrix}\right)}}\rt \left(\begin{smallmatrix} 0\\ {\rm rad}(P)\end{smallmatrix}\right)_0 \st {\left(\begin{smallmatrix} 0\\ i\end{smallmatrix}\right)} \rt {\left(\begin{smallmatrix} 0\\ P\end{smallmatrix}\right)_0} $$
\noindent
By \cite[Lemma 3.1]{RS}, we know that the morphism ${\left(\begin{smallmatrix} 0\\ i\end{smallmatrix}\right)}$ is   minimal right almost split. Moreover, by Lemma \ref{righ-app-certan-obj}, the morphism ${\left(\begin{smallmatrix} 0\\ p\end{smallmatrix}\right)}$ is a minimal right $\CP$-approximation. Thus,  the composition of these two  morphisms is right minimal almost split  in $\CP$, which proves $(1)$.  For the second part of $(2)$,  assume that  there exists  a decomposition
 $$\left(\begin{smallmatrix} Q\\ P\end{smallmatrix}\right)_{ip}=\left(\begin{smallmatrix} Q'\\ P'\end{smallmatrix}\right)_{f'}\oplus \left(\begin{smallmatrix} Q''\\ P''\end{smallmatrix}\right)_{f''}.$$
 The decomposition implies that $Q=Q'\oplus Q''$ and $P=P'\oplus P''$. Since $P$ is indecomposable, either $P'=0$ or $P''=0$. Without of loss generality, assume $P''=0$. This implies that $Q''$ is a summand of $Q$ such that $p\mid_{Q''}=0$. This is not possible unless that $Q''=0$. Therefore, $\left(\begin{smallmatrix} Q\\ P\end{smallmatrix}\right)_{ip}$ is indecomposable in $\CP$, which completes the proof of the second part of $(2)$.
\end{proof}

Based on  the above lemma,  we can classify the  number of the  indecomposable direct summands of the right minimal almost split of an indecomposable projective object in $\CP$ into two types. Let $P$ be an indecomposable projective module. For  $(0 \rt P)$: the number is  equal to the number of direct summands of  ${\rm rad}(P)$. For  $(P\st{1}\rt P)$:  the number is always one.

\hspace{ 1 mm}

 The  following remark provides a useful way to consider short exact sequences  in $\CP.$
\begin{remark}\label{Remark-Split}
Since any short exact sequence of projective modules  splits,  it is not difficult to see that any short exact sequence in $\CP$  is isomorphic to a short exact sequence of the following form

	$$\xymatrix@1{  0\ar[r] & {\left(\begin{smallmatrix} P_1\\ P_2\end{smallmatrix}\right)}_{f}
			\ar[rr]^-{\left(\begin{smallmatrix} [1~~0]^t \\ [1~~0]^t\end{smallmatrix}\right)}
			& & {\left(\begin{smallmatrix}P_1\oplus Q_1\\ P_2\oplus Q_2\end{smallmatrix}\right)}_{h}\ar[rr]^-{\left(\begin{smallmatrix} [0~~1] \\ [0~~1]\end{smallmatrix}\right)}& &
			{\left(\begin{smallmatrix}Q_1\\ Q_2\end{smallmatrix}\right)}_{g}\ar[r]& 0, } \ \    $$
where ${h}= \tiny {\left[\begin{array}{ll} f & q \\ 0 & g \end{array} \right]}$ with $q:Q_1 \rt P_2$.
 \end{remark}

\begin{proposition}\label{Cok-preserve-almost-split}
Let $\eta: \ 0 \rt {\rm X}\st{\phi}\rt  {\rm Y}\st{\psi}\rt {\rm Z}\rt 0$ be an almost split sequence in $\CP$. If  ${\rm Z}$ is not isomorphic to either $(P\rt 0)$,  for some indecomposable projective module $P$,
 then
$${\rm Cok}(\eta): \ 0 \rt {\rm Cok}({\rm X})\rt {\rm Cok}({\rm Y})\rt {\rm Cok}({\rm Z})\rt 0$$
is an almost split sequence in $\mmod \La$.

\end{proposition}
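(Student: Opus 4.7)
The plan is to apply the cokernel functor to $\eta$ and identify ${\rm Cok}(\eta)$ with the (unique) almost split sequence in $\mmod \La$ ending at $M := {\rm Cok}(Z)$, using the equivalence $\overline{\rm Cok}\colon \CP/\CV \simeq \mmod \La$ of Theorem \ref{Thm-Cok-equiv} together with the preservation results of Lemma \ref{Lemma-left-right-CP}. I first pin down $M$: by Lemma \ref{proj-inde-p} the indecomposables in $\CV$ are exactly the objects of the forms $(P\st{1}\rt P)$ and $(P\rt 0)$, so the hypotheses on $Z$ force $Z\notin\CV$, whence $M\neq 0$ and $M$ is indecomposable via $\overline{\rm Cok}$. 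A standard lifting argument (indecomposables of $\CP\setminus\CV$ that are isomorphic in $\CP/\CV$ are already isomorphic in $\CP$, because any morphism between such indecomposables factoring through $\CV$ lies in the radical of the target's local endomorphism ring) also shows that $M$ is non-projective: otherwise $Z\cong (0\rt M)$ in $\CP$, making $Z$ projective in $\CP$, a contradiction.

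Next I verify the hypotheses of Lemma \ref{Lemma-left-right-CP}. If ${\rm Cok}(\psi)$ were a retraction, any section lifts by fullness of ${\rm Cok}$ to some $\tilde s\colon Z\rt Y$, and $\psi\tilde s-1_Z$ lies in the kernel of ${\rm Cok}$ on morphisms, hence factors through $\CV$. Since $Z\notin\CV$ is indecomposable with local endomorphism ring, such a morphism is in the radical, so $\psi\tilde s$ is invertible and $\psi$ becomes a split epimorphism in $\CP$, contradicting that $\psi$ is right almost split. Similarly $X\notin\CV$: otherwise ${\rm Cok}(X)=0$ and the snake lemma applied to $\eta$ (using Remark \ref{Remark-Split} and the surjectivity of $\psi$ as a morphism in $\CH$) forces ${\rm Cok}(\psi)$ to be an isomorphism and hence a retraction, which we have just ruled out. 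The dual argument then shows ${\rm Cok}(\phi)$ is not a section. Lemma \ref{Lemma-left-right-CP} therefore yields that ${\rm Cok}(\psi)$ is right almost split and ${\rm Cok}(\phi)$ is left almost split in $\mmod \La$, and ${\rm Cok}(X)$ is a non-zero indecomposable module.

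The main obstacle is showing that ${\rm Cok}(\eta)$ is itself a short exact sequence. Writing $\eta$ in the canonical form of Remark \ref{Remark-Split} with middle map $\left(\bsm f & q \\ 0 & g \esm\right)$, the snake lemma applied to the two split-exact rows reduces short exactness to the vanishing of the connecting homomorphism $\delta\colon \ker g\rt {\rm Cok}(f)$, $y\mapsto [q(y)]$, i.e.\ to the inclusion $q(\ker g)\subseteq \im(f)$. My plan here is to exploit the explicit form of the almost split sequence in $\CP$ ending at $Z$: identify $Z$ with the minimal projective presentation of $M$, construct the almost split sequence in $\CP$ ending at $Z$ by applying Theorem \ref{KP-ext-proj} (with $\CX=\CP\subseteq\CH$) to the almost split sequence in $\CH$ of Proposition \ref{Reminder:0M}, use the $\CP$-approximation formula of Proposition \ref{approx-cp} to describe the middle term explicitly, and then verify $q(\ker g)\subseteq \im(f)$ by direct inspection of the resulting matrix.

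With short exactness in hand, ${\rm Cok}(\eta)\colon 0\rt {\rm Cok}(X)\rt {\rm Cok}(Y)\rt M\rt 0$ is a non-split short exact sequence in $\mmod \La$ with indecomposable ends, $M$ non-projective, and right map right almost split. The standard decomposition of a right almost split morphism as the sink map plus a split direct summand, together with the indecomposability of ${\rm Cok}(X)$, forces the split summand to vanish, so ${\rm Cok}(\psi)$ is in fact minimal right almost split and ${\rm Cok}(\eta)$ coincides with the almost split sequence $0\rt \tau_\La M\rt E\rt M\rt 0$ in $\mmod \La$, as required.
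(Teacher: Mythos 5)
Your overall architecture (snake lemma applied to the canonical form of Remark \ref{Remark-Split}, reduction of exactness to the vanishing of the connecting map $\delta\colon \ker g\rt \Cok(f)$, $y\mapsto [q(y)]$, then Lemma \ref{Lemma-left-right-CP} plus an indecomposability argument to upgrade to an almost split sequence) matches the paper's, and the preparatory steps you do carry out --- identifying $M$ as indecomposable, nonzero and non-projective, and checking that $\Cok(\psi)$ is not a retraction and $\Cok(\phi)$ not a section via the ideal $\langle\CV\rangle$ and local endomorphism rings --- are correct. But the heart of the proof, namely the inclusion $q(\ker g)\subseteq \im(f)$, is left as a ``plan'': you propose to compute the middle term of $\eta$ explicitly by combining Proposition \ref{Reminder:0M}, Theorem \ref{KP-ext-proj} and Proposition \ref{approx-cp} and then ``verify by direct inspection of the resulting matrix.'' That inspection is never performed, and it is not routine: the middle term produced by Theorem \ref{KP-ext-proj} is only determined up to splitting off an Ext-injective summand, and the approximation of the middle object of the $\CH$-sequence is given by a pullback whose matrix entries you would still have to trace through. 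As written, the one step that actually needs proving is missing.

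The paper closes exactly this gap with a short argument you could adopt instead of the explicit computation: let $Q\st{p}\rt K_3=\ker g$ be a projective cover and consider the induced morphism $(ip,0)\colon (Q\rt 0)\rt (Q_1\st{g}\rt Q_2)={\rm Z}$ in $\CP$. Since ${\rm Z}$ is indecomposable and, by hypothesis, not isomorphic to $(P\rt 0)$, this morphism is not a retraction, so the almost split property of $\eta$ lifts it through $\psi$ to some $(s,0)\colon (Q\rt 0)\rt {\rm Y}$; commutativity with the zero lower component forces ${\rm Im}(s)\subseteq K_2=\ker h$, whence $\lambda\colon K_2\rt K_3$ is surjective and $\delta=0$. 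Note that this is where the hypothesis on ${\rm Z}$ enters in an essential and transparent way, whereas in your sketch its role in the exactness step is only implicit. If you replace your ``direct inspection'' paragraph with this lifting argument, the rest of your proof goes through.
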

\begin{proof}
Let ${\rm X}=(P_1\st{f}\rt P_2), \ {\rm Z}=(Q_1\st{h}\rt Q_2)$. Using Remark \ref{Remark-Split}, we can write the short exact sequence $\eta$ as:
$$\xymatrix@1{  0\ar[r] & {\left(\begin{smallmatrix} P_1\\ P_2\end{smallmatrix}\right)}_{f}
			\ar[rr]
			& & {\left(\begin{smallmatrix} P_1\oplus Q_1\\ P_2\oplus Q_2\end{smallmatrix}\right)}_{h}\ar[rr]& &
			{\left(\begin{smallmatrix}Q_1 \\ Q_2\end{smallmatrix}\right)}_{g}\ar[r]& 0. } \ \    $$
\noindent Denote ${\rm Y}=(P_1\oplus Q_1\st{h}\rt P_2\oplus Q_2).$ The snake lemma yields the following commutative diagram in $\mmod \La$
$$\xymatrix{& 0 \ar[d] & 0 \ar[d] & 0 \ar[d]\\
		0 \ar[r] & K_1 \ar[d] \ar[r] &  K_2 \ar[d]
		\ar[r]^{\lambda} & K_3 \ar[d]^i&\\
		0 \ar[r] &P_1\ar[d]^{f} \ar[r] & P_1\oplus Q_1
		\ar[d]^{h}\ar[r] & Q_1 \ar[d]^{g} \ar[r] & 0\\
		0 \ar[r] &P_2 \ar[d] \ar[r] & P_2\oplus Q_2
		\ar[d] \ar[r] & Q_2 \ar[d] \ar[r] & 0\\
		& {\rm Cok}(\mathrm{X}) \ar[r]^{{\rm Cok}(\phi)}\ar[d] & {\rm Cok}(\mathrm{Y}) \ar[r]^{{\rm Cok}(\psi)}\ar[d]  & {\rm Cok}(\mathrm{Z}) \ar[r]\ar[d] &0\\ & 0  & 0  & 0 &  }
	$$
We claim that $\la$ is surjective.  To prove this, let $Q\st{p}\rt K_3\rt $ be a projective cover. Then, there is a morphism $(ip, 0):(Q\rt 0)\rt (Q_1\st{g}\rt Q_2)$ in $\CP$. By our assumption, the morphism $(ip, 0)$ is clearly a   non-retraction. Since $\eta$ is an almost split sequence in $\CP$, there exists a morphism $(s, 0):(Q\rt 0)\rt (P_1\oplus Q_1\st{h}\rt P_2\oplus Q_2)$ with $\psi (s,~ 0)=(ip, 0).$ The commutative diagram induced by the morphism $(s,~ 0)$ implies that ${\rm Im}(s)\subseteq K_2.$ This implies that $\la$ is surjective, as required.  Therefore,  the image of $\eta$ under the cokernel functor is  a short exact sequence in $\mmod \La$. It is impossible for  the short exact sequence ${\rm Cok}(\eta)$ to  split; Otherwise, it would  follow that $\eta$ splits, which  is absurd. Finally, Lemma \ref{Lemma-left-right-CP} shows that ${\rm Cok}(\eta)$ is an almost split sequence in $\mmod \La.$ This completes the proof.
\end{proof}

\begin{remark}
In \cite{EH}, a functor $\Theta_{\CC}:{\rm H}(\CC)\rightarrow \mmod \CC$ is introduced for a dualizing variety $\CC$. It is shown  that this functor behaves well with respect to the Auslander-Reiten theory. Specifically, it is demonstrated that the cokernel functor ${\rm Cok}$ can be viewed as a special case of this general observation.
\end{remark}

\subsection{From $\Gamma_{\La}$ to $\Gamma_{\CP}$}\label{FromArtoCP}
According to Theorem \ref{Thm-Cok-equiv} and Proposition \ref{Cok-preserve-almost-split}, we can identify $\Gamma_{\La}$ as a full subquiver of $\Gamma_{\CP}:=\Ga_{\CP(\La)}$. This embedding is given by sending a vertex corresponding to  an  indecomposable module $M$  to the vertex in $\Ga_{\CP}$ corresponding to the minimal projective presentation of $M$. Conversely, we can obtain $\Ga_{\CP}$ from $\Gamma_{\La}$ by adding vertices corresponding to indecomposable objects of the forms $(P\st{1}\rt P)$ and $(P\rt 0)$ and relevant arrows. To add the missing vertices and arrows one can follow the following steps: (remember that throughout this algorithm and later as well,  we identify the vertices with iso-classes of indecomposable modules or objects, and indecomposable modules with their minimal projective presentations.)

Let $P$ be an indecomposable projective module.
\begin{enumerate}

\item  $(P\st{1}\rt P)$: in view of Lemma \ref{righ-left-minimimal-proj-obj}, we need to consider  two cases:
 \begin{itemize}
 \item [$(i)$] if $P/{\rm rad}(P)$ is not injective, then   we add $(P\st{1}\rt P)$  to the middle  of  the mesh starting from $P/{\rm rad}(P)$, and we add an arrow  from $P/{\rm rad}(P)$ to $(P\st{1}\rt P)$, as well as  an arrow form $(P\st{1}\rt P)$ to $\tau^{-1}P/{\rm rad}(P)$.
     \item [$(ii)$] if $P/{\rm rad}(P)$ is  injective, then  we add $(P\st{1}\rt P)$ along with an arrow from $P/{\rm rad}(P)$ to $(P\st{1}\rt P)$.
     \end{itemize}
\item   $(P\rt 0)$: in view of Proposition \ref{minimal-appro-projective-dom},   we add arrows  properly from the direct summands of $\nu P/{\rm soc}(\nu P) $ to  the vertex $(P \rt 0)$,  and  add arrows properly from the vertex  $(P\rt 0)$ to the inverse Auslander-Reiten translation of non-injective direct summands of $\nu P/{\rm soc}(\nu P)$.
\end{enumerate}

Overall, this construction allows us to understand the relationship between $\Gamma_{\La}$ and $\Gamma_{\CP}$, and provides a way to obtain $\Gamma_{\CP}$ from $\Gamma_{\La}$ by adding certain vertices and arrows.   Our result in next section are helpful for this construction.

It is worth noting this close relationship between $\Gamma_{\La}$ and $\Gamma_{\CP}$ has been  investigated in  earlier works, e.g. see \cite[Proposition 4.12]{CPS}.
\subsection{The morphism category of injective modules}\label{injective-morphism}
Let $\CI:=\CI(\La)$ denote the subcategory of $\CH$ consisting of all morphisms with injective domain and injective codomain. We  can apply  a similar  consideration   to $\CI$ as  we have already  done for $\CP$. We present  the similar (important) results on $\CI(\La)$ without their proofs, as the proofs are similar.
\begin{theorem}\label{Theorem-morphism-functor-injective}
The functor ${\rm Ker}:\CI(\La) \lrt \mmod \La$ is full, dense, and objective.  Thus, the kernel functor  makes the following diagram commute.
\[\xymatrix{
\CI(\La)\ar[r]^{\rm Ker}\ar[d]^{\pi} & \mmod \La\\
\frac{\CI(\La)}{\CU}\ar[ur]_{\overline{\rm Ker}}   \\
}\]
\noindent
Here, $\pi$ is the natural quotient map, and $\CU$ is the full subcategory of $\CI(\La)$ generated by all finite direct sums of objects of  the forms $(I\st{1}\rt I)$ or $(0\rt I)$, where  $I$ is an injective module.
\end{theorem}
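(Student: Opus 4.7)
The plan is to derive Theorem \ref{Theorem-morphism-functor-injective} from Theorem \ref{Thm-Cok-equiv} applied to $\La^{\op}$ via Matlis duality, rather than repeat the dual of every argument by hand. Let $k$ be the Artinian base ring of $\La$ and $E$ its minimal injective cogenerator, and write $D=\Hom_k(-,E)$ for the standard $k$-duality, which is a contravariant equivalence between $\mmod \La$ and $\mmod \La^{\op}$ restricting to a bijection between indecomposable projective right $\La^{\op}$-modules and indecomposable injective left $\La$-modules. Applying $D$ componentwise lifts this to a contravariant equivalence $\widetilde D:\CP(\La^{\op})\rt \CI(\La)^{\op}$ sending $(P\st{f}\rt Q)$ to $(DQ\st{Df}\rt DP)$.

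The core observation is that $D\circ {\rm Ker}$ is naturally isomorphic to ${\rm Cok}\circ \widetilde D^{-1}$, identifying the kernel functor ${\rm Ker}:\CI(\La)\rt \mmod \La$ with the cokernel functor ${\rm Cok}:\CP(\La^{\op})\rt \mmod \La^{\op}$ up to $D$. Granting this, the properties of being dense, full and objective transfer directly from Theorem \ref{Thm-Cok-equiv}. One may in any case verify density on the nose: every $M\in \mmod \La$ admits an injective copresentation $0\rt M\rt I^0\rt I^1$, exhibiting $M$ as ${\rm Ker}(I^0\rt I^1)$. Fullness follows dually to the projective case, using the extension property of injective modules in place of the lifting property of projectives, compare the diagram chase in the proof of Lemma \ref{Lemma-left-right-CP}.

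The subtlest point is identifying the defect subcategory. Under $\widetilde D$, the generating objects $(P\st{1}\rt P)$ and $(P\rt 0)$ of $\CV\subseteq \CP(\La^{\op})$ are sent respectively to $(DP\st{1}\rt DP)$ and $(0\rt DP)$, and as $P$ ranges over indecomposable projective right $\La^{\op}$-modules, $DP$ ranges over all indecomposable injective left $\La$-modules. Hence $\widetilde D(\CV)=\CU$, and the objective statement for ${\rm Cok}$ with defect $\CV$ translates into objectivity of ${\rm Ker}$ with defect $\CU$; the induced commutative triangle in the statement then follows formally.

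The main obstacle, if one instead attempts a direct proof bypassing duality, lies in the objective step: given $(\sigma_1,\sigma_2):(I\st{g}\rt J)\rt (I'\st{g'}\rt J')$ in $\CI(\La)$ whose induced map on kernels vanishes, one must explicitly factor $(\sigma_1,\sigma_2)$ through a finite direct sum of objects of the forms $(K\st{1}\rt K)$ and $(0\rt K)$ with $K$ injective. The natural construction is to split off the image of $\sigma_1$ restricted to ${\rm Im}(g)$ through an injective envelope, mirroring the projective argument behind Theorem \ref{Thm-Cok-equiv}; invoking duality sidesteps this routine but intricate splitting, which is why it is my preferred route.
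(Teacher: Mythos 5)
Your proposal is correct and is essentially the route the paper intends: the paper states this theorem without proof, remarking only that the arguments are "similar" to those for ${\rm Cok}$ on $\CP(\La)$ (Theorem \ref{Thm-Cok-equiv}, itself cited to \cite{EH} and \cite{Ba}), and your explicit transport of that theorem through the Matlis duality $D$, with the verification that $D\circ{\rm Ker}\cong{\rm Cok}\circ\widetilde D^{-1}$ and $\widetilde D(\CV)=\CU$, is exactly the rigorous form of that remark.
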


\begin{proposition}
The subcategory $\CI$ is functorially finite in $\CH.$ In particular, $\CI$ has almost split sequences.
\end{proposition}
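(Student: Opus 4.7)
The strategy is to dualize the argument given for $\CP$ just above. First I would verify that $\CI$ is coresolving in $\CH$. The injective objects of the abelian category $\CH\simeq\mmod T_2(\La)$ are direct sums of $(I\st{1}\rt I)$ and $(I\rt 0)$ with $I\in\Inj\La$, so they all lie in $\CI$; closure of $\CI$ under extensions is the dual of the corresponding statement for $\CP$; and closure under cokernels of monomorphisms between its members is automatic componentwise, because any short exact sequence of injective $\La$-modules splits and therefore has injective cokernel. By the dual of \cite[Corollary 03]{KS} it then suffices to show that $\CI$ is covariantly finite in $\CH$.

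To construct left $\CI$-approximations I would dualize Proposition~\ref{approx-cp}, systematically replacing ``projective cover'' by ``injective envelope''. Given $(N\st{g}\rt M)$ in $\CH$, fix an injective copresentation $0\rt N\st{\sigma}\rt I_0\st{h}\rt I_1$ of $N$ (available since $\mmod\La$ has enough injectives) and an injective envelope $e\colon M\hookrightarrow I$. Write $h=ip$ as an epi--mono factorization, form the pushout of $g$ along $\sigma$, and use injectivity of $I$ to extend $e$ across that pushout. Assembling these data produces a morphism in $\CH$ from $(N\st{g}\rt M)$ to an object of $\CI$ of the form $(I_0\rt I\oplus I_1)$; its universal property is checked exactly as in Proposition~\ref{left-approximation-3.10}, with injectivity of the two components of any target $(J_0\rt J_1)\in\CI$ playing the role that selfinjectivity of $\La$ played there.

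Combining coresolvingness with covariant finiteness gives that $\CI$ is functorially finite in $\CH$. Since $\CI$ is in addition extension-closed in $\CH$, \cite[Theorem 2.4]{AS} yields the ``in particular'' clause that $\CI$ has almost split sequences. The one step that requires genuine attention is the verification of the universal property of the approximation just constructed: Proposition~\ref{left-approximation-3.10} is stated only for selfinjective $\La$, but removing that hypothesis is harmless because at the codomain side we only need injectivity within $\mmod\La$ (not projectivity), which is always available over an arbitrary Artin algebra. All other steps are formal dualizations of the proofs already present in the paper.
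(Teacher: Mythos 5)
Your proof is correct and follows exactly the route the paper intends: the paper omits the proof, stating only that it is "similar" to the $\CP$ case, and your argument is precisely that dualization (coresolving plus covariantly finite via the dual of Proposition~\ref{approx-cp}, then \cite[Corollary 0.3]{KS} and \cite[Theorem 2.4]{AS}). Your observation that the selfinjectivity hypothesis of Proposition~\ref{left-approximation-3.10} is not needed here—because only injectivity, not projectivity, of the target components is used—is accurate and is the one point where naive citation of the stated results would not suffice.
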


\begin{proposition}\label{Cok-ass}
Let $\eta: \ 0 \rt {\rm X}\st{\phi}\rt  {\rm Y}\st{\psi}\rt {\rm Z}\rt 0$ be an almost split sequence in $\CI$. If ${\rm X}$ is not isomorphic to  $(0\rt I)$,  for some indecomposable injective module $I$, then
$${\rm Ker}(\eta): \ 0 \rt {\rm Ker}({\rm X})\rt {\rm Ker}({\rm Y})\rt {\rm Ker}({\rm Z})\rt 0$$
is an almost split sequence in $\mmod \La$.
\end{proposition}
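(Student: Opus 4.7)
The plan is to dualize the proof of Proposition \ref{Cok-preserve-almost-split}, swapping projective covers with injective envelopes and right almost split with left almost split. Since any short exact sequence of injective modules splits, the analogue of Remark \ref{Remark-Split} holds in $\CI$, so I may assume $\eta$ has the block-triangular form
\[
0\rt\left(\begin{smallmatrix}I_1\\ I_2\end{smallmatrix}\right)_f\rt\left(\begin{smallmatrix}I_1\oplus J_1\\ I_2\oplus J_2\end{smallmatrix}\right)_h\rt\left(\begin{smallmatrix}J_1\\ J_2\end{smallmatrix}\right)_g\rt 0
\]
where $h=\bigl[\begin{smallmatrix}f & q\\ 0 & g\end{smallmatrix}\bigr]$ for some $q:J_1\rt I_2$. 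The snake lemma applied to the two rows of injectives then yields the six-term exact sequence
\[
0\rt\Ker(f)\rt\Ker(h)\rt\Ker(g)\st{\delta}\rt\Coker(f)\rt\Coker(h)\rt\Coker(g)\rt 0,
\]
so ${\rm Ker}(\eta)$ is a short exact sequence exactly when $\delta=0$.

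The key calculation is $\delta=0$, which mirrors the surjectivity of $\lambda$ in Proposition \ref{Cok-preserve-almost-split}. Let $j:\Coker(f)\hookrightarrow I$ be an injective envelope and set $\pi:=j\circ p:I_2\rt I$, where $p:I_2\rt\Coker(f)$ is the canonical projection. Since $\pi f=0$, the pair $(0,\pi)$ defines a morphism ${\rm X}\rt(0\rt I)$ in $\CI$. A direct inspection of the components shows $(0,\pi)$ is a section if and only if $I_1=0$ and $\pi$ has a left inverse; using the indecomposability of ${\rm X}$, this would force ${\rm X}\cong(0\rt I')$ for some indecomposable injective $I'$, contradicting the hypothesis. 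Hence $(0,\pi)$ is a non-section, and the left almost split property of $\phi$ provides $(s_1,s_2):{\rm Y}\rt(0\rt I)$ with $(s_1,s_2)\phi=(0,\pi)$. The restriction of $s_2$ to $I_2$ must equal $\pi$, so I write $s_2=[\pi\ \ s_2']$ with $s_2':J_2\rt I$; reading off $s_2 h=0$ yields $\pi q=-s_2'g$, and restricting to $\Ker(g)$ gives $\pi q(y)=0$ for every $y\in\Ker(g)$. Since $\Ker(\pi)=\im(f)$, we obtain $q(\Ker(g))\subseteq\im(f)$, equivalent to $\delta=0$.

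It remains to verify that ${\rm Ker}(\eta)$ is non-split and almost split in $\mmod\La$. If ${\rm Ker}(\eta)$ were split, a retraction of $\Ker(\phi)$ would lift via fullness of the kernel functor (Theorem \ref{Theorem-morphism-functor-injective}) to some $\tilde r:{\rm Y}\rt{\rm X}$, and objectivity would force $\tilde r\phi-1_{\rm X}$ to factor through $\CU$. Because $\End({\rm X})$ is local (as ${\rm X}$ is indecomposable), either $\tilde r\phi$ or $\tilde r\phi-1_{\rm X}$ is an automorphism; the former case makes $\phi$ a section, hence $\eta$ split, a contradiction, while the latter exhibits ${\rm X}$ as a direct summand of an object of $\CU$, and since $\CU$ is closed under direct summands, ${\rm X}\in\CU$, again contradicting the hypothesis. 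Having established that ${\rm Ker}(\eta)$ is a non-split short exact sequence, the dual of Lemma \ref{Lemma-left-right-CP}(2) (which adapts verbatim to $\CI$) shows $\Ker(\phi)$ is left almost split in $\mmod\La$, whence ${\rm Ker}(\eta)$ is an almost split sequence. I anticipate the main obstacle to be the $\delta=0$ calculation: the choice of test object $(0\rt I)$ and the verification that $(0,\pi)$ is a non-section precisely under the hypothesis require careful dualization.
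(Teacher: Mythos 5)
Your proposal is correct and is essentially the proof the paper intends: the paper omits the argument, declaring it ``similar'' to that of Proposition \ref{Cok-preserve-almost-split}, and your computation that the connecting map $\delta$ vanishes --- using the test object $(0\rt I)$ built from the injective envelope of ${\rm Cok}(f)$ and the left almost split property of $\phi$ --- is the precise dual of the paper's proof that $\lambda$ is surjective via a projective cover of $K_3$ and the right almost split property of $\psi$. One small loose end at the finish: a non-split short exact sequence whose first map is left almost split need not be almost split (it could be the direct sum of an almost split sequence with $0\rt 0\rt B''\rt B''\rt 0$), so you should either also invoke the dual of Lemma \ref{Lemma-left-right-CP}$(1)$ to get that ${\rm Ker}(\psi)$ is right almost split (as the paper does by citing both parts), or note that ${\rm Ker}({\rm Z})$ is indecomposable because ${\rm Z}$ is a non-projective indecomposable of $\CI$ and hence survives in $\CI/\CU\simeq\mmod\La$; likewise, in your non-splitness argument the case ${\rm X}\cong(I\st{1}\rt I)$ is excluded not by the stated hypothesis but because such an object is Ext-injective in $\CI$ and cannot begin an almost split sequence.
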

The Nakayama functor $\nu$ induces an equivalence between $\CP(\La)$ and $\CI(\La)$. More precisely, it is defined by sending $(P\st{f}\rt Q) \in \CP(\La)$ to $(\nu P\st{\nu f} \rt \nu Q) \in \CI(\La)$. The induced equivalence on the morphism category  is still denoted  by $\nu$.

The functor $(-)^*=\Hom_{\La}(-, \La)$ in general does not give  a duality between $\mmod \La$ and $\mmod \La^{\rm op}$. It always provides a duality between ${\rm prj}\mbox{-}\La$ and ${\rm prj}\mbox{-}\La^{\rm op}$. But this duality induces a duality between the category of morphisms of projective modules. More precisely, an object $(P\st{f}\rt Q) \in \CP(\La)$ is mapped to $(Q^*\st{f^*}\rt P^*)$ in $\CP(\La^{\rm op})$. We still denote by $(-)^*$ the induced duality. Furthermore, we have the following  commutative diagram:
 \[\xymatrix{\CP(\La)\ar[d]^{(-)^*} \ar[r]^{\rm Cok}& \mmod \La \ar[d]^{(-)^*} \\ \CP(\La^{\rm op})  \ar[r]^{\rm Ker} & \mmod \La^{\rm op}   }\]

 In the sequel,  we will observe that the duality $(-)^*:\CP(\La)\rt \CP(\La^{\rm op}) $ is helpful to state the dual version of our results.

There are several  consequences for the module category  can be proved   by combining the equivalences on the   morphism categories. Although these results may be known  in the literature,  proving them  using the morphism category approach is still  interesting.

Let us start by recovering the equivalence between the stable categories defined by  the Auslander-Reiten translation.

For this, let $\CW=<\CV \cup \{(0 \rt P)\mid P \in {\rm prj}\mbox{-}\La\}>$, and let $\CW'=<\CU\cup  \{(I\rt 0)\mid I \in {\rm inj}\mbox{-}\La\}>$. It is easy to see that the equivalence given in Theorem \ref{Thm-Cok-equiv} can be extended to $\CP(\La)/\CW\simeq \underline{\rm mod}\mbox{-}\La$, denote by ${\overline{\rm Cok}}_p$,  and similarly, by Theorem \ref{Theorem-morphism-functor-injective}, there is the equivalence $\CI(\La)/ \CW' \simeq \overline{\rm mod}\mbox{-}\La$, denoted by $\overline{{\rm Ker}}_I$. In addition, since $\nu (\CW)=\CW'$, we have the equivalence $\overline{\nu}=\CP(\La)/\CW\simeq \CI(\La)/\CW'$. Altogether, we obtain  the promised  equivalence

$$\theta: \underline{\rm mod}\mbox{-}\La \st{({\overline{\rm Cok}})^{-1}_p}\lrt \CP(\La)/\CW \st{\overline{\nu}}\rt \CI(\La)/\CW'\st{\overline{{\rm Ker}}_I}\lrt \overline{\rm mod}\mbox{-}\La.  $$

By following the definition, we observe that for any indecomposable non-projective module $M$, $\theta(M)=\tau_{\La}M.$\\
We also note  that one can obtain the transpose ${\rm Tr}:\mmod \La\rt \mmod \La^{\rm op}$ by taking the duality $(-)^*:\CP(\La)\rt \CP(\La^{\rm op})$ along with the cokernel functors. Namely,
\begin{equation}\label{Tran-Eq}
     \underline{{\rm mod}}\mbox{-} \La \st{\overline{\rm Cok}^{-1}_{\La}} \rt \CP(\La)/\mathcal{Y}\st{(-)^*}\rt \CP(\La^{\rm op})/\mathcal{Y}'\st{\overline{\rm Cok}_{\La^{\rm op}}}\rt \underline{{\rm mod}}\mbox{-} \La^{\rm op}
   \end{equation}
where $\CY=<(0\rt P), (P\st{1}\rt P), (P\rt 0)| \ P \in {\rm prj}\mbox{-}\La> $, and   $\CY'$ is defined similarly.\\

\section{certain almost split sequences in $\CP(\La)$}\label{Section6}
In this section the almost split sequences in $\CP$ ending  at (or starting from) objects of the form either $(0 \rt P)$ or $(P\rt 0)$ will be determined explicitly. These results will be helpful  in constructing  the Auslander-Reiten quiver of $\CP$, particularly when  the  knitting procedure works well. As an  application,  we will  provide additional  information for the canonical map introduced in Remark \ref{canonicalRemark}.
\subsection{the object $(P \rt 0)$}

\begin{lemma}
 Let $M$ be a  module and $P_1\st{f}\rt P_0\st{\sigma}\rt M\rt 0$ a minimal projective presentation of $M$. Then the object  $(P_1\st{f}\rt P_0)$  in $\CP$ is indecomposable if and only if $M$ is indecomposable in $\mmod \La.$
\end{lemma}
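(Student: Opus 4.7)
My plan is to prove both directions using the additivity of the cokernel functor combined with the defining properties of minimality (namely that $\sigma: P_0 \to M$ is a projective cover and $P_1 \to \Omega_{\La}(M)$ is also a projective cover).

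For the direction $M$ indecomposable $\Rightarrow$ $(P_1 \xrightarrow{f} P_0)$ indecomposable, suppose we have a decomposition
\[
(P_1 \xrightarrow{f} P_0) \;\cong\; (Q_1 \xrightarrow{g} Q_0) \oplus (Q_1' \xrightarrow{g'} Q_0').
\]
Applying the additive functor $\mathrm{Cok}$ yields $M \cong \mathrm{Cok}(g) \oplus \mathrm{Cok}(g')$, and indecomposability of $M$ forces one summand, say $\mathrm{Cok}(g')$, to vanish. Then the composition $Q_0' \hookrightarrow P_0 \xrightarrow{\sigma} M$ is zero, so $\sigma$ factors through the direct summand projection $P_0 \twoheadrightarrow Q_0$; since $\sigma$ is a projective cover, no nonzero summand of $P_0$ can map to zero, hence $Q_0' = 0$. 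With $Q_0' = 0$, the component $g': Q_1' \to 0$ tells us $Q_1' \subseteq \ker(f) = \ker(P_1 \twoheadrightarrow \Omega_{\La}(M))$; but this latter map is also a projective cover, so its kernel contains no nonzero direct summand of $P_1$, forcing $Q_1' = 0$. Thus the decomposition is trivial.

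For the converse, assume $M = M_1 \oplus M_2$ with both summands nonzero, and let $P_1^{(i)} \xrightarrow{f_i} P_0^{(i)} \xrightarrow{\sigma_i} M_i \to 0$ be a minimal projective presentation of each $M_i$. Then
\[
P_1^{(1)} \oplus P_1^{(2)} \xrightarrow{f_1 \oplus f_2} P_0^{(1)} \oplus P_0^{(2)} \xrightarrow{\sigma_1 \oplus \sigma_2} M \to 0
\]
is a minimal projective presentation of $M$, so by uniqueness of minimal projective presentations up to isomorphism it must be isomorphic to $(P_1 \xrightarrow{f} P_0)$. Since each $M_i \neq 0$ forces $P_0^{(i)} \neq 0$, this is a nontrivial decomposition in $\CP$, contradicting indecomposability.

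Both directions are essentially routine; the only place requiring care is the first direction, where one must extract the two vanishing conclusions $Q_0' = 0$ and then $Q_1' = 0$ from the correct halves of the minimality hypothesis. I expect no serious obstacle, since the uniqueness of minimal projective presentations and the standard characterization of projective covers via kernels contained in the radical handle everything cleanly.
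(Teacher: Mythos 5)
Your proposal is correct and follows essentially the same route as the paper: one direction by applying the additive cokernel functor to a putative decomposition and contradicting minimality, the other by summing minimal projective presentations of the summands of $M$ and invoking uniqueness of minimal projective presentations. The only cosmetic difference is that in the first direction you rule out the extra summand directly via the projective-cover/radical characterization, whereas the paper notes that a zero-cokernel object of $\CP$ is a sum of objects $(P\rt 0)$ and $(P\st{1}\rt P)$ and then contradicts minimality; both finishes are equally valid.
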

\begin{proof}
Assume $(P_1\st{f}\rt P_0)$ is an indcomposable object in $\CP$. If $M$ decomposes as $M_1\oplus M_2$, then let $Q^1_1\st{f^1}\rt Q^1_0\rt M_1\rt 0$ and $Q^2_1\st{f^2}\rt Q^2_0\rt M_2\rt 0$ be minimal projective presentations of $M_1$ and $M_2$, respectively. Then the direct sum of two minimal projective presentations provides a minimal projective presentation of $M$. Because of the uniqueness of minimal projective presentation up to isomprphism, we can deduce an isomorphism $(P_1\st{f}\rt P_0)\simeq (Q_1^1\st{f^1}\rt Q_0^1)\oplus (Q_1^2\st{f^2}\rt Q_0^2)$ in $\CP$, which  contradicts the assumption that  $(P_1\st{f}\rt P_0)$ is indecomposable.  Conversely, assume that $M$ is an indecomposable module. If there is an decomposition  $$(P_1\st{f}\rt P_0)= (Q_1^1\st{f^1}\rt Q_0^1)\oplus (Q_1^2\st{f^2}\rt Q_0^2),$$
then by applying  the cokernel functor we get

$$M={\rm Cok}(f^1)\oplus {\rm Cok}(f^2).$$
But since $M$ is indecomposable, we have either ${\rm Cok}(f^1)=0$ or ${\rm Cok}(f^2)=0$. Without loss of generality,  assume that ${\rm Cok}(f^1)=0$. Then  $(Q^1_1\st{f^1}\rt Q^1_0)$ is isomorphic to a direct sum of objects of the form $(P\rt 0)$ or $(P\st{1}\rt P)$. This  contradicts the minimality of the projective presentation $P_1\st{f}\rt P_0\rt M\rt 0$ of $M$. Therefore, $(P_1 \st{f}\rt P_0)$ must be indecomposable in $\CP.$
\end{proof}

Recall that $\alpha_P$ is defined in Remark \ref{canonicalRemark}.

\begin{proposition}\label{minimal-appro-projective-dom}
Let $P$ be an indecomposable projective module. Let $P_1\st{g}\rt P_0 \st{\sigma}\rt  \nu P \rt 0 $ be a minimal projective presentation of $\nu P$, and $e:P\rt P_0$ a lifting of the morphism $\alpha_P$ through $\sigma.$ Then the following sequence is an almost split sequence in $\CP(\La)$
	$$\xymatrix@1{  0\ar[r] & {\left(\begin{smallmatrix} P_1\\ P_0\end{smallmatrix}\right)}_{g}
			\ar[rr]^-{\left(\begin{smallmatrix} [1~~0]^t\\ 1\end{smallmatrix}\right)}
			& & {\left(\begin{smallmatrix}P_1\oplus P\\  P_0 \end{smallmatrix}\right)}_{[g~~e]}\ar[rr]^-{\left(\begin{smallmatrix} [0~~1] \\ 0\end{smallmatrix}\right)}& &
		{\left(\begin{smallmatrix}P\\0\end{smallmatrix}\right)}_{0}\ar[r]& 0 }.$$
		
	\end{proposition}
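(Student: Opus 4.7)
My plan is to derive the statement from the almost split sequence in $\CH$ ending at $(P\rt 0)$, using the relative Auslander--Reiten correspondence of Theorem \ref{KP-ext-proj} applied to the extension-closed, functorially finite subcategory $\CP\subseteq\CH$. By Proposition \ref{proj-nakaya}, that almost split sequence is
\[
\eta:\quad 0\rt \left(\begin{smallmatrix} 0\\ \nu P\end{smallmatrix}\right)_{0}\rt \left(\begin{smallmatrix} P\\ \nu P\end{smallmatrix}\right)_{\alpha_P}\rt \left(\begin{smallmatrix} P\\ 0\end{smallmatrix}\right)_{0}\rt 0,
\]
so $\tau_{\CH}(P\rt 0)=(0\rt\nu P)$. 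Since $(P\rt 0)$ lies in $\CP$ but is not projective there by Lemma \ref{proj-inde-p}(1), Theorem \ref{KP-ext-proj} produces a distinguished exact commutative diagram built from $\eta$ and the minimal right $\CP$-approximations of its two leftmost terms, so it will suffice to identify that diagram with the sequence in the statement and invoke part $(c)$ of the theorem.

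Next I would compute those two approximations. For the leftmost term Lemma \ref{righ-app-certan-obj}(1) directly gives the minimal right $\CP$-approximation $\left(\begin{smallmatrix} 0\\ \sigma\end{smallmatrix}\right):\left(\begin{smallmatrix} P_1\\ P_0\end{smallmatrix}\right)_{g}\rt \left(\begin{smallmatrix} 0\\ \nu P\end{smallmatrix}\right)_{0}$. For the middle term I would apply Proposition \ref{approx-cp} with $(N\rt M)=(P\st{\alpha_P}\rt \nu P)$, using $1_P:P\rt P$ as the projective surjection onto $N=P$ and the given minimal projective presentation of $\nu P$; this yields the right $\CP$-approximation $\left(\begin{smallmatrix} [0~1]\\ \sigma\end{smallmatrix}\right):\left(\begin{smallmatrix} P_1\oplus P\\ P_0\end{smallmatrix}\right)_{[g~e]}\rt \left(\begin{smallmatrix} P\\ \nu P\end{smallmatrix}\right)_{\alpha_P}$, with $e$ being precisely the lifting of $\alpha_P$ through $\sigma$ chosen in the statement.

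With these two approximations in hand I would assemble the exact commutative diagram of Theorem \ref{KP-ext-proj}(a): the proposed sequence on top, $\eta$ on the bottom, and the identity on $(P\rt 0)$ on the right. Exactness of the top row is an elementary componentwise check, and commutativity of both squares is immediate from $\sigma g=0$ and $\sigma e=\alpha_P$. Since $\nu P$ is indecomposable and $P_1\st{g}\rt P_0$ is its minimal projective presentation, the preceding lemma makes $\left(\begin{smallmatrix} P_1\\ P_0\end{smallmatrix}\right)_{g}$ indecomposable in $\CP$, so Theorem \ref{KP-ext-proj}(c) forces the top row to be an almost split sequence in $\CP$, completing the proof.

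The main obstacle I expect is the verification that the middle approximation $\left(\begin{smallmatrix} [0~1]\\ \sigma\end{smallmatrix}\right)$ is actually \emph{minimal} and not merely a right approximation; I plan to handle this by the triangular endomorphism method used in Proposition \ref{Prop.dom-proj.approx}, observing that any $(\phi_1,\phi_0)$ factoring the approximation through itself must satisfy $\phi_1=\left(\begin{smallmatrix} a & b\\ 0 & 1\end{smallmatrix}\right)$ with $\phi_0$ an automorphism of $P_0$ (because $\sigma$ is a projective cover of $\nu P$), and then using the compatibility relation $\phi_0 g=ga$ together with $\Ker g$ being superfluous in $P_1$ to force $a$ to be surjective, hence an automorphism.
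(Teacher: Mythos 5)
Your proposal is correct and follows essentially the same route as the paper: both start from the almost split sequence in $\CH$ ending at $(P\rt 0)$ given by Proposition \ref{proj-nakaya}, form the pullback along (minimal) right $\CP$-approximations of its first two terms, and conclude via Theorem \ref{KP-ext-proj}. Your added care in establishing minimality of the middle approximation directly (rather than citing Proposition \ref{Prop.dom-proj.approx}, whose hypothesis that the structure map be a projective cover is not literally met by $\alpha_P$) and in checking indecomposability of $(P_1\st{g}\rt P_0)$ for part $(c)$ only makes the argument more complete.
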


\begin{proof}
By Proposition \ref{proj-nakaya},  there is the following almost split sequence in $\CH$ ending at $(P \rt 0)$
	$$\xymatrix@1{  0\ar[r] & {\left(\begin{smallmatrix} 0\\ \nu P\end{smallmatrix}\right)}_{0}
			\ar[rr]^-{\left(\begin{smallmatrix} 0\\ 1\end{smallmatrix}\right)}
			& & {\left(\begin{smallmatrix}P\\ \nu P \end{smallmatrix}\right)}_{\alpha_P}\ar[rr]^-{\left(\begin{smallmatrix} 1 \\ 0\end{smallmatrix}\right)}& &
			{\left(\begin{smallmatrix}P\\0\end{smallmatrix}\right)}_{0}\ar[r]& 0. }$$
  Applying Proposition \ref{Prop.dom-proj.approx},  we obtain the following minimal right $\CP$-approximations of $(P\st{\alpha_{P}} \rt \nu P)$ and $(0 \rt \nu P)$, respectively:			
 $$\left(\begin{smallmatrix} [0~~e]\\ \sigma \end{smallmatrix}\right):\left(\begin{smallmatrix} P_1\oplus P\\ P_0\end{smallmatrix}\right)_{[g~~e]}\rt \left(\begin{smallmatrix} P\\ \nu P\end{smallmatrix}\right)_{\alpha_P} $$			
where $e:P\rt P_0$ is a lifting of $f$ through $\sigma,$ and
$$\left(\begin{smallmatrix} 0\\ \sigma \end{smallmatrix}\right):\left(\begin{smallmatrix} P_1\\ P_0\end{smallmatrix}\right)_g\rt \left(\begin{smallmatrix} 0\\ \nu P\end{smallmatrix}\right)_{0} $$	
Now we can build the following pull-back diagram in $\CH$:

	$$\xymatrix{		
		 \ \ 0\ar[r]&{\left(\begin{smallmatrix} P_1\\ P_0\end{smallmatrix}\right)_{g}}	\ar[d]^{{\left(\begin{smallmatrix} 0\\ \sigma \end{smallmatrix}\right)}} \ar[r] & {\left(\begin{smallmatrix} P_1\oplus P\\ P_0\end{smallmatrix}\right)_{[g~~e]}}
			\ar[d]^{{\left(\begin{smallmatrix} [0~~e]\\ \sigma \end{smallmatrix}\right)}} \ar[r] & {\left(\begin{smallmatrix} P\\  0\end{smallmatrix}\right)_{0} } \ar@{=}[d]\ar[r] &0 \\ \ \		0 \ar[r]&	{\left(\begin{smallmatrix} 0\\ \nu P\end{smallmatrix}\right)_{0} } \ar[r] & {\left(\begin{smallmatrix} P\\ \nu P\end{smallmatrix}\right)_{\alpha_P} }
			\ar[r] & {\left(\begin{smallmatrix} P\\ 0\end{smallmatrix}\right)_{0} } \ar[r]&0 }	$$
			Therefore, by Theorem \ref{KP-ext-proj}, the top row is an almost split sequence in $\CP$, as desired.
   \end{proof}

The above result is helpful to provide more information about the canonical morphism $\alpha_P.$

\begin{corollary}\label{CorollaryCokernel}
Let $P$ be an indecomposable projective module. Then there is following exact sequence in $\mmod \La$

$$0 \rt \Omega^2_{\La}(\nu P)\rt \Omega^2_{\La}(\nu P/{\rm soc} (\nu P))\oplus Q\rt  P \st{\alpha_P}\rt \nu P  \rt \nu P/{\rm soc} (\nu P)\rt  0, $$
where  $Q$ is a projective module.
\end{corollary}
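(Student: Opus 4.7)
The plan is to apply the snake lemma to the almost split sequence produced by Proposition~\ref{minimal-appro-projective-dom}, read as a commutative diagram of two-term complexes of projectives. Forgetting the $\CP$-structure yields the commutative diagram in $\mmod\La$
\[
\xymatrix@R=1.2pc{
0\ar[r] & P_1\ar[r]^-{[1~0]^t}\ar[d]_{g} & P_1\oplus P\ar[r]^-{[0~1]}\ar[d]_{[g~e]} & P\ar[r]\ar[d]^{0} & 0\\
0\ar[r] & P_0\ar@{=}[r] & P_0\ar[r] & 0\ar[r] & 0
}
\]
with exact rows. The snake lemma then produces the six-term exact sequence
$$0\rt\Ker g\rt\Ker[g~e]\rt P\st{\delta}\rt \Coker g\rt\Coker[g~e]\rt 0.$$

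Next I would identify each entry. Since $P_1\st{g}\rt P_0\st{\sigma}\rt \nu P\rt 0$ is the chosen minimal projective presentation of $\nu P$, we get $\Ker g=\Omega_\La^2(\nu P)$ and $\Coker g=\nu P$. Chasing the connecting map, $\delta$ sends $p\in P$ to $\sigma(e(p))=\alpha_P(p)$, so $\delta=\alpha_P$. For the rightmost term, $\Coker[g~e]=P_0/(\Im g+\Im e)\simeq \nu P/\sigma(\Im e)=\nu P/\Im(\alpha_P)$; by the canonical factorization $\alpha_P\colon P\twoheadrightarrow\mathrm{top}(P)\simeq\mathrm{soc}(\nu P)\hookrightarrow \nu P$ of Remark~\ref{canonicalRemark}, $\Im(\alpha_P)=\mathrm{soc}(\nu P)$, so $\Coker[g~e]=\nu P/\mathrm{soc}(\nu P)$.

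It remains to identify $\Ker[g~e]$. Setting $C:=\nu P/\mathrm{soc}(\nu P)$, the computation above shows that
$$P_1\oplus P\st{[g~e]}\lrt P_0\lrt C\lrt 0$$
is a (not necessarily minimal) projective presentation of $C$. By uniqueness of the minimal projective presentation, any such presentation decomposes as the minimal one $P_1^C\st{g^C}\rt P_0^C\rt C\rt 0$ plus trivial summands: there exist projectives $Q_0, Q$ and isomorphisms $P_0\simeq P_0^C\oplus Q_0$, $P_1\oplus P\simeq P_1^C\oplus Q_0\oplus Q$ under which $[g~e]$ becomes $\mathrm{diag}(g^C,1_{Q_0},0)$. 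Taking kernels yields $\Ker[g~e]=\Omega_\La^2(C)\oplus Q$. Plugging these identifications into the snake-lemma sequence gives exactly the exact sequence in the statement.

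The main technical point is the decomposition of the non-minimal projective presentation in the last paragraph; everything else is formal diagram chasing. Once one invokes the standard fact that a projective presentation differs from the minimal one only by split summands of the form $(Q_0\st{1}\rt Q_0)$ and $(Q\rt 0)$, the identification of $\Ker[g~e]$ as $\Omega_\La^2(C)\oplus Q$ is immediate.
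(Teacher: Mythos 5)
Your proof is correct, and its skeleton is the same as the paper's: both read the almost split sequence of Proposition~\ref{minimal-appro-projective-dom} as a commutative diagram in $\mmod \La$ with rows $0 \rt P_1 \rt P_1\oplus P \rt P \rt 0$ and $0 \rt P_0 \rt P_0 \rt 0 \rt 0$, apply the snake lemma, and account for the possible non-minimality of the presentation $P_1\oplus P \st{[g~~e]}\rt P_0 \rt \nu P/{\rm soc}(\nu P)\rt 0$ via the extra projective summand $Q$. The one step you handle differently is the identification of ${\rm Cok}[g~~e]$: you compute it directly as $\nu P/{\rm Im}(\alpha_P)$ and then use the explicit factorization of $\alpha_P$ through ${\rm top}(P)\simeq {\rm soc}(\nu P)$ from Remark~\ref{canonicalRemark} to conclude ${\rm Im}(\alpha_P)={\rm soc}(\nu P)$, whereas the paper argues through Auslander--Reiten theory: by Lemma~\ref{Lemma-left-right-CP} the cokernel of the monomorphism in the almost split sequence is a minimal left almost split morphism out of $\nu P$, hence by uniqueness it is the canonical quotient $\nu P\rt \nu P/{\rm soc}(\nu P)$. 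Your route is more elementary and self-contained, and the same diagram chase also identifies the connecting homomorphism as $\alpha_P$ (a point the paper leaves implicit); the paper's route instead showcases the compatibility of the cokernel functor with almost split morphisms developed in Section~4. As a minor remark, your computation ${\rm Ker}(g)=\Omega^2_{\La}(\nu P)$ confirms that the entry $\Omega_{\La}(\nu P)$ in the top-left corner of the paper's $3\times 3$ diagram is a typo for $\Omega^2_{\La}(\nu P)$, consistent with the statement of the corollary.
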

\begin{proof}
We begin by considering the almost split sequence ending at $(P\rt 0)$ given in Proposition \ref{minimal-appro-projective-dom}, which we denote as $\delta$. This sequence  induces the following commutative diagram:
$$\xymatrix{		
		\ \ 0\ar[r]&	P_1\ar[d]^g \ar[r]^{[1~~0]^t} & P_1\oplus P
			\ar[d]^{[g~~e]} \ar[r]^{[0~~1]} & P \ar[d]\ar[r] &0 \\  \ \		0 \ar[r]&	P_0\ar[r]^1 & P_0
			\ar[r] &  0\ar[r]&0 }	$$
We know that the canonical quotient map $\nu P\rt \nu P/{\rm soc} (\nu P)$ is a minimal left almost split morphism in $\mmod \La$ starting from $\nu P.$ On the other hand, Lemma \ref{Lemma-left-right-CP} implies that applying cokernel functor to  the monomorphism lying in the almost split sequence $\delta$  gives us another  minimal  left almost split morphism starting at $\nu P$. By  uniqueness, the cokernel of the middle object in $\delta$ is isomorphic to $\nu P/{\rm soc}(\nu P)$. Combining  facts,  we obtain the following diagram:

$$\xymatrix{& 0 \ar[d] & 0 \ar[d] & 0 \ar[d]\\
		0 \ar[r] & \Omega_{\La}(\nu P ) \ar[d] \ar[r] & \Omega^2_{\La}(\nu P/{\rm soc}(\nu P))\oplus Q  \ar[d]
		\ar[r] & P \ar@{=}[d]&\\
		0 \ar[r] &P_1\ar[d]^{g} \ar[r] & P_1\oplus P
		\ar[d]^{[g~~e]}\ar[r] & P \ar[d] \ar[r] & 0\\
		0 \ar[r] &P_0 \ar[d] \ar[r] & P_0
		\ar[d] \ar[r] & 0 \ar[d] \ar[r] & 0\\
		& \nu P \ar[r]\ar[d] & \nu P/{\rm soc } (\nu P) \ar[d]\ar[r]  & 0  &\\ & 0  & 0  &  &  }
	$$
Note that, since the object $(P_1\oplus P\st{[g~~e]}\rt P)$  might not induce a minimal projective presentation,  we may have a projective summand $Q$ in the second row. To complete the proof, we use the snake lemma.
\end{proof}

 It is not generally true that the projective module $Q$ can be removed from  the exact sequence given in Corollary \ref{CorollaryCokernel}. In the following example,  we show that   $Q$ can not be removed  even when  $P$ is projective-injective.
\begin{example}
Let   $\La$ be  given by  the quiver $A_3:3\st{\alpha}\rt 2 \st{\beta}  \rt 1$ and relation $\alpha \beta=0$. Let  $P={\bsm 3\\ 2\esm}$, which is projective-injective. The exact sequence in Corollary \ref{CorollaryCokernel} is given by:
$$ 0 \rt 1 \rt 0 \oplus {\bsm 2\\ 1\esm} \rt {\bsm 3\\ 2\esm} \rt 3\rt 0 $$
Here $Q={\bsm 2\\ 1\esm}\neq 0$.
\end{example}

\subsection{indecomposable projective-injective modules}

\begin{proposition}\label{proj-inj-indecom}
	 Let $P$ be an indecomposable  projective-injective module. Then, there exists an indecomposable projective module $Q$ such that:
\begin{itemize}
\item [$(i)$] $P\simeq \nu Q$;
\item [$(ii)$] ${\rm top}(Q) \simeq {\rm soc}(P) $;
\item [$(iii)$] the almost split sequence in $\CP$ starting from $(0 \rt P)$ is of the  form
		$$\xymatrix@1{ \eta: \ \ 0\ar[r] & {\left(\begin{smallmatrix} 0\\ P\end{smallmatrix}\right)}_{0}
			\ar[r]
			&  {\left(\begin{smallmatrix}Q\\ \nu Q \end{smallmatrix}\right)}_{\alpha_Q}\ar[r]&
			{\left(\begin{smallmatrix}Q\\0\end{smallmatrix}\right)}_{0}\ar[r]& 0 }.$$
			\end{itemize}
\end{proposition}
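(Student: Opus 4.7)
The plan is to reduce the proposition to Proposition \ref{proj-nakaya} via the Nakayama equivalence, and then promote the resulting almost split sequence in $\CH$ to one in $\CP$. The extra hypothesis that $P$ is \emph{projective}-injective is precisely what ensures that the three terms of Proposition \ref{proj-nakaya} all land in $\CP$.

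For (i) and (ii), since $\nu \colon {\rm prj}\mbox{-}\La \rt {\rm inj}\mbox{-}\La$ is an equivalence and $P$ is an indecomposable injective module, there is a unique (up to isomorphism) indecomposable projective module $Q$ with $\nu Q \simeq P$, which gives (i). The identification ${\rm top}(Q) \simeq {\rm soc}(\nu Q)$ is a standard feature of the Nakayama functor and is already recorded in Remark \ref{canonicalRemark}; combined with $\nu Q \simeq P$ it yields ${\rm top}(Q) \simeq {\rm soc}(P)$, which is (ii).

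For (iii), I would apply Proposition \ref{proj-nakaya} to this indecomposable projective $Q$ to obtain the almost split sequence in $\CH$
$$0 \rt \left(\begin{smallmatrix} 0\\ \nu Q\end{smallmatrix}\right)_{0} \rt \left(\begin{smallmatrix} Q\\ \nu Q\end{smallmatrix}\right)_{\alpha_{Q}} \rt \left(\begin{smallmatrix} Q\\ 0\end{smallmatrix}\right)_{0} \rt 0.$$
Because $\nu Q \simeq P$ is assumed projective, all three terms lie in $\CP$; this is already the sequence $\eta$ proposed in (iii). What remains is to upgrade it from an almost split sequence in $\CH$ to one in $\CP$.

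This last step is the only nontrivial point, but I expect no real obstacle. Being a short exact sequence in $\CP$ is automatic because $\CP$ is a full extension-closed subcategory of $\CH$ whose objects here are the three terms above, and the indecomposability of $(0 \rt P)$ and $(Q \rt 0)$ in $\CP$ is clear. For the left almost split property in $\CP$, given any non-section $\phi \colon (0 \rt P) \rt (A \rt B)$ in $\CP$, I would first check that $\phi$ is also a non-section in $\CH$: any retraction in $\CH$ between two objects of $\CP$ automatically lives in $\CP$ by fullness of the subcategory, so a section in $\CP$ is the same as a section in $\CH$. The left almost split property of $\eta$ in $\CH$ then produces a factorization through $\left(\begin{smallmatrix} Q\\ \nu Q\end{smallmatrix}\right)_{\alpha_{Q}}$, and this factorization lies in $\CP$ by fullness. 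Finally, one should note that $(0 \rt P)$ is not Ext-injective in $\CP$ (by Lemma \ref{proj-inde-p}(2) the indecomposable Ext-injectives in $\CP$ are $(P' \st{1}\rt P')$ and $(P'\rt 0)$), so an almost split sequence starting at $(0 \rt P)$ does exist in $\CP$, and by uniqueness it must coincide with the sequence $\eta$ just exhibited.
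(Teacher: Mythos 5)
Your proposal is correct and follows essentially the same route as the paper: obtain $Q$ from the Nakayama equivalence, invoke Proposition \ref{proj-nakaya} for the almost split sequence in $\CH$, observe that projectivity of $P\simeq\nu Q$ places all three terms in $\CP$, and deduce (ii) from Remark \ref{canonicalRemark}. The only difference is that you spell out the (correct, and worth recording) argument that an almost split sequence in $\CH$ whose terms all lie in the full extension-closed subcategory $\CP$ restricts to an almost split sequence in $\CP$, a step the paper passes over in one sentence.
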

\begin{proof}
Since $P$ is an injective module and the Nakayama functor $\nu:{\rm prj}\mbox{-}\La\rt {\rm inj}\mbox{-}\La$ is an equivalence, there exists  an indecomposable projective module $Q$ such that $\nu Q\simeq P.$  Proposition \ref{proj-nakaya} gives us the  following almost split sequence in $\CH$:

$$\xymatrix@1{  0\ar[r] & {\left(\begin{smallmatrix} 0\\ \nu Q\end{smallmatrix}\right)}_{0}
			\ar[r]^-{\left(\begin{smallmatrix} 0\\ 1\end{smallmatrix}\right)}
			&  {\left(\begin{smallmatrix}Q\\ \nu Q \end{smallmatrix}\right)}_{\alpha_Q}\ar[r]^-{\left(\begin{smallmatrix} 1 \\ 0\end{smallmatrix}\right)}&
			{\left(\begin{smallmatrix}Q\\0\end{smallmatrix}\right)}_{0}\ar[r]& 0. }$$
Then by considering the isomorphism  $P\simeq \nu Q$, we obtain the desired almost split sequence in $\CP$.  The remaining statement $(ii)$ follows from the observation given in Remark \ref{canonicalRemark}.
\end{proof}

 When $P$ in the above proposition is also a simple module, then ${\rm soc}(P)=P$. Hence, there is the following almost split sequence in $\CP$
$$\xymatrix@1{  0\ar[r] & {\left(\begin{smallmatrix} 0\\ P\end{smallmatrix}\right)}_{0}
			\ar[r]^-{\left(\begin{smallmatrix} 0\\ 1\end{smallmatrix}\right)}
			&  {\left(\begin{smallmatrix}P\\ P\end{smallmatrix}\right)}_{1}\ar[r]^-{\left(\begin{smallmatrix} 1 \\ 0\end{smallmatrix}\right)}&
			{\left(\begin{smallmatrix}P\\0\end{smallmatrix}\right)}_{0}\ar[r]& 0. }$$

 The existence of such a simple projective-injective module $P$ is  is equivalent to saying that $\La$ has  a simple algebra as a direct summand.
	


\begin{proposition}\label{Prop-almost1}
Keep in mind the notation in Proposition \ref{proj-inj-indecom}.  There is the following almost split sequence in $\CP$
	$$\xymatrix@1{  \ \ 0\ar[r] & {\left(\begin{smallmatrix} Q'\\ P'\end{smallmatrix}\right)}_{f}
			\ar[r]
			&  {\left(\begin{smallmatrix}Q'\oplus Q\\ P' \end{smallmatrix}\right)}_{h}\oplus {\left(\begin{smallmatrix}0\\P\end{smallmatrix}\right)}_{0} \ar[r]&
			{\left(\begin{smallmatrix}Q\\ \nu Q\end{smallmatrix}\right)}_{\alpha_Q}\ar[r]& 0 },$$
where ${h}=[f~~q] $ with a non-zero morphism $q:Q \rt P'$. 		
\end{proposition}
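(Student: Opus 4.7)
The plan is to derive the almost split sequence in $\CP$ in two stages: first, invoke Proposition \ref{Reminder:0M} to produce the corresponding almost split sequence in $\CH$ ending at $(Q\xrightarrow{\alpha_Q}\nu Q)$; then apply Theorem \ref{KP-ext-proj} (the Krause--Platzek criterion) to pull it back to $\CP$ through minimal right $\CP$-approximations. Set $\bar M:=\mathrm{Cok}(\alpha_Q) = \nu Q/\mathrm{soc}(\nu Q)$. The epi-mono factorization $Q\twoheadrightarrow\mathrm{top}(Q)\simeq\mathrm{soc}(\nu Q)\hookrightarrow\nu Q$ from Remark \ref{canonicalRemark} exhibits $(Q\to\nu Q)_{\alpha_Q}$ as the minimal projective presentation of $\bar M$, and the projective cover $\nu Q\twoheadrightarrow\bar M$ together with projectivity of $\nu Q$ identifies $\mathrm{End}(\bar M)$ as a quotient of the local ring $\mathrm{End}(\nu Q)$, so $\bar M$ and hence $(Q\to\nu Q)_{\alpha_Q}$ are indecomposable. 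Applying Proposition \ref{Reminder:0M} produces the $\CH$-almost split sequence
$$0\to(0\to\tau_\La\bar M)\to (Q\to\tau_\La\bar M\oplus\nu Q)_{hp_1}\to (Q\to\nu Q)_{\alpha_Q}\to 0,$$
with $h:\mathrm{soc}(\nu Q)\to\tau_\La\bar M\oplus\nu Q$ of the form $\binom{h_1}{i}$, built from the pullback diagram there.

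Let $Q'\xrightarrow{f}P'\xrightarrow{\sigma'}\tau_\La\bar M\to 0$ be the minimal projective presentation of $\tau_\La\bar M$. By Lemma \ref{righ-app-certan-obj}(1), the minimal right $\CP$-approximation of $(0\to\tau_\La\bar M)$ is the indecomposable object $(Q'\to P')_f$. The main step is to identify the minimal right $\CP$-approximation of the middle term $E:=(Q\to\tau_\La\bar M\oplus\nu Q)_{hp_1}$ as the proposed object $\left(\begin{smallmatrix}Q'\oplus Q\\P'\end{smallmatrix}\right)_{h}\oplus\left(\begin{smallmatrix}0\\\nu Q\end{smallmatrix}\right)_{0}$ with $h=[f\ q]$, where $q:Q\to P'$ is any lift of $h_1p_1:Q\to\tau_\La\bar M$ through $\sigma'$ (which exists since $Q$ is projective). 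The approximation morphism has top $[0\ 1]$ and bottom $\left(\begin{smallmatrix}\sigma' & 0\\ c & 1\end{smallmatrix}\right)$, where $c:P'\to\nu Q$ is constructed as $c=\tilde c\sigma'$ for a map $\tilde c:\tau_\La\bar M\to\nu Q$ satisfying $\tilde c\, h_1=i$; such $\tilde c$ exists by the injectivity of $\nu Q$, extending the inclusion $i:\mathrm{soc}(\nu Q)\hookrightarrow\nu Q$ across the monic map $h_1$. A direct lifting argument, in the spirit of Proposition \ref{approx-cp}, verifies the $\CP$-approximation property, and a Nakayama-style computation analogous to the proof of Proposition \ref{Prop.dom-proj.approx} establishes minimality.

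Since the minimal right $\CP$-approximation of $\tau_\CH(Q\to\nu Q)_{\alpha_Q}$ is indecomposable, Theorem \ref{KP-ext-proj}(c) guarantees that pulling back the $\CH$-almost split sequence along these approximations produces the almost split sequence in $\CP$ directly, without any Ext-injective split summand. Explicit pullback gives the connecting morphisms $u=\left(\begin{smallmatrix}\binom{1}{0}\\\binom{1}{-c}\end{smallmatrix}\right)$ and $v=\left(\begin{smallmatrix}[0\ 1]\\ [c\ 1]\end{smallmatrix}\right)$, with exactness following from $cf=0$ and $cq=\alpha_Q$. Finally $q\neq 0$: if $q=0$ then $\sigma'q=h_1p_1=0$ forces $h_1=0$ (as $p_1$ is epic), whereupon the middle of the $\CH$-sequence decomposes as $(0\to\tau_\La\bar M)\oplus(Q\to\nu Q)_{\alpha_Q}$ and the sequence splits, contradicting its almost split nature.

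The main obstacle is the identification of the minimal right $\CP$-approximation of $E$. A direct application of Proposition \ref{approx-cp} yields a non-minimal right $\CP$-approximation $(Q'\oplus Q\to P'\oplus \nu Q)$ with map $\left(\begin{smallmatrix}f&\tilde e\\0&\alpha_Q\end{smallmatrix}\right)$, which is \emph{not} isomorphic in $\CP$ to the target minimal form, because any attempted change of basis on the codomain would require an automorphism of $\nu Q$ annihilating $\alpha_Q$. Reducing to the desired form $(Q'\oplus Q\to P')_h\oplus(0\to\nu Q)$ requires the separate direct verification (using projective-injectivity of $\nu Q=P$ in an essential way) carried out in the previous paragraph; this is the most delicate point of the argument.
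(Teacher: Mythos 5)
Your route is genuinely different from the paper's. The paper's own proof is essentially two lines: Proposition \ref{proj-inj-indecom} already exhibits an irreducible morphism $(0\rt P)\rt (Q\st{\alpha_Q}\rt\nu Q)$, so $(0\rt P)$ must occur as a direct summand of the middle term of the almost split sequence ending at $(Q\st{\alpha_Q}\rt\nu Q)$; combined with the normal form of short exact sequences in $\CP$ from Remark \ref{Remark-Split}, which forces that middle term to be $(Q'\oplus Q\rt P'\oplus\nu Q)$ with structure map $\left(\begin{smallmatrix}f&q\\0&\alpha_Q\end{smallmatrix}\right)$, this pins down the statement after splitting off the summand $(0\rt\nu Q)$. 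You instead rebuild the sequence from scratch: you take the $\CH$-almost split sequence of Proposition \ref{Reminder:0M} ending at $(Q\st{\alpha_Q}\rt\nu Q)$ and push it into $\CP$ via Theorem \ref{KP-ext-proj} by computing minimal right $\CP$-approximations, exactly as the paper does for Proposition \ref{minimal-appro-projective-dom}. This is workable, and your key constructions are sound: $\tilde c$ exists because $\nu Q$ is injective and $h_1$ is monic (a point you use but do not justify; it holds because $\Omega_{\La}(\bar M)=\mathrm{soc}(\nu Q)$ is simple and $h_1=0$ would split the $\CH$-sequence, by the same argument you give for $q\neq 0$), and both the lifting argument for the approximation property and the matrix computation for right minimality do go through. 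The cost of your route is that these two verifications, which you only assert ``in the spirit of'' Propositions \ref{approx-cp} and \ref{Prop.dom-proj.approx}, constitute essentially the whole proof and are left unperformed.

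There is also one genuine error, in your closing paragraph, and correcting it would shorten your argument considerably. You claim that the right $\CP$-approximation $(Q'\oplus Q\rt P'\oplus\nu Q)$ with structure map $\left(\begin{smallmatrix}f&q\\0&\alpha_Q\end{smallmatrix}\right)$ produced by Proposition \ref{approx-cp} is not isomorphic in $\CP$ to $\left(\begin{smallmatrix}Q'\oplus Q\\P'\end{smallmatrix}\right)_{[f~~q]}\oplus\left(\begin{smallmatrix}0\\\nu Q\end{smallmatrix}\right)_0$. It is: the morphism $c=\tilde c\sigma'\colon P'\rt\nu Q$ that you yourself construct satisfies $cf=0$ and $cq=\alpha_Q$, so the automorphism $\left(\begin{smallmatrix}1&0\\-c&1\end{smallmatrix}\right)$ of $P'\oplus\nu Q$, together with the identity on $Q'\oplus Q$, carries $\left(\begin{smallmatrix}f&q\\0&\alpha_Q\end{smallmatrix}\right)$ to $\left(\begin{smallmatrix}f&q\\0&0\end{smallmatrix}\right)$, which is precisely the second object. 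The obstruction you describe (``an automorphism of $\nu Q$ annihilating $\alpha_Q$'') only appears if you restrict to block-diagonal changes of basis; the off-diagonal entry $-c$ is available exactly because $\nu Q$ is injective. Hence the object underlying the output of Proposition \ref{approx-cp} is already isomorphic to the claimed middle term, and your ``most delicate point'' reduces to this one change of basis plus the minimality check. Since your separate direct verification is nonetheless correct, the error does not invalidate the proof, but the misdiagnosis should be removed.
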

\begin{proof}
Set $\tau_{\CP}(Q\st{\alpha_Q}\rt \nu Q)=(Q'\st{f}\rt P')$. From the following almost split sequence given in   Proposition \ref{proj-inj-indecom},
$$\xymatrix@1{  0\ar[r] & {\left(\begin{smallmatrix} 0\\ P\end{smallmatrix}\right)}_{0}
			\ar[r]^-{\left(\begin{smallmatrix} 0\\ 1\end{smallmatrix}\right)}
			&  {\left(\begin{smallmatrix}Q\\ \nu Q \end{smallmatrix}\right)}_{\alpha_Q}\ar[r]^-{\left(\begin{smallmatrix} 1 \\ 0\end{smallmatrix}\right)}&
			{\left(\begin{smallmatrix}Q\\0\end{smallmatrix}\right)}_{0}\ar[r]& 0 },$$
we observe that $(0 \rt P)$ lies in the middle term of the almost split sequence ending at $(Q \st{\alpha_Q}\rt \nu Q).$ In view of Remark \ref{Remark-Split} and considering the isomorphism $P\simeq \nu Q$,  the almost split sequence  with ending $(Q\st{\alpha_Q}\rt \nu Q)$ has the required form.
\end{proof}

\begin{remark}\label{Rem-rad-soc}
    From \cite[Proposition 8.6]{SY}, we know that if $P$ is an indecomposable projective-injective module in $\mmod \La$ of length at least two, then there is an almost  split sequence in $\mmod \La$ of the following form
    \begin{equation}  \label{a.s.s.rad}
    0 \rt {\rm rad}(P) \rt {\rm rad}(P)/{\rm soc}
    (P)\oplus P\rt P/{\rm soc}(P)\rt 0 \end{equation}
Keep in mind the notations used in Proposition \ref{Prop-almost1}. Let   $Q'\st{f}\rt P'\rt {\rm rad}(P)\rt 0$ be  a minimal projective presentation. We  obtain that $Q'\oplus Q\st{h}\rt P'\rt {\rm rad}(P)/{\rm soc}(P)\rt 0$ is a minimal projective presentation.

\end{remark}

\subsection{ the object $(0 \rt P)$}
 Consider the duality $(-)^*:\CP(\La)\rt \CP(\La^{\rm op})$, as defined in  \ref{injective-morphism}.
 The duality clearly preserves the almost split sequences, i.e., if $\la: 0 \rt {\rm X} \st{f}\rt {\rm Y} \st{g}\rt {\rm Z}\rt 0$ is an almost split sequence in $\CP(\La)$, then  $\la^*: 0 \rt{\rm Z}^* \st{g^*}\rt {\rm Y}^* \st{f^*}\rt {\rm X^*}\rt 0 $ is an almost split sequence in $\CP(\La^{\rm op})$.

\begin{proposition}\label{starting0P}
    Let $P$ be an indecomposable projective module. Then, the almost split sequence starting at $(0 \rt P)$ has the following form
$$\xymatrix@1{  0\ar[r] & {\left(\begin{smallmatrix} 0\\ P\end{smallmatrix}\right)}_{0}
			\ar[rr]
			& & {\left(\begin{smallmatrix}P_0^*\\  P_1^*\oplus P^*\end{smallmatrix}\right)}_{[g^*~~e^*]^t}\ar[rr]& &
		{\left(\begin{smallmatrix}P^*_0\\P^*_1\end{smallmatrix}\right)}_{g^*}\ar[r]& 0 },$$
  where $P_1\st{g}\rt P_0 \rt \nu_{\La^{\rm op}} P^*\rt 0$ is a minimal projective presentation in $\mmod \La^{\rm op}.$
\end{proposition}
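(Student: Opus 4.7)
My plan is to deduce this almost split sequence from Proposition \ref{minimal-appro-projective-dom} by exploiting the duality
$$(-)^* : \CP(\La) \to \CP(\La^{\rm op})$$
recalled in Subsection \ref{injective-morphism}, which sends $(X \xrightarrow{f} Y)$ to $(Y^* \xrightarrow{f^*} X^*)$ and, as noted just before the proposition, preserves almost split sequences.

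First I would apply Proposition \ref{minimal-appro-projective-dom} to the algebra $\La^{\rm op}$ and to the indecomposable projective module $P^* \in \mathrm{prj}\mbox{-}\La^{\rm op}$. Letting $P_1 \xrightarrow{g} P_0 \xrightarrow{\sigma} \nu_{\La^{\rm op}} P^* \to 0$ be a minimal projective presentation in $\mmod \La^{\rm op}$ and $e : P^* \to P_0$ a lifting of $\alpha_{P^*}$ through $\sigma$, the proposition yields the almost split sequence in $\CP(\La^{\rm op})$
$$\xymatrix@1{  0\ar[r] & {\left(\begin{smallmatrix} P_1\\ P_0\end{smallmatrix}\right)}_{g}
\ar[r]
& {\left(\begin{smallmatrix} P_1\oplus P^*\\ P_0\end{smallmatrix}\right)}_{[g~~e]}\ar[r]
& {\left(\begin{smallmatrix} P^*\\ 0\end{smallmatrix}\right)}_{0}\ar[r]& 0. }$$

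Next I would apply the duality $(-)^*$ term by term. One checks directly that $(P^* \to 0)^* = (0 \to P^{**})$, that $(P_1 \xrightarrow{g} P_0)^* = (P_0^* \xrightarrow{g^*} P_1^*)$, and that $(P_1 \oplus P^* \xrightarrow{[g~e]} P_0)^*$ becomes $(P_0^* \to P_1^* \oplus P^{**})$ with middle map the transpose $[g^*~~e^*]^t$, since dualizing a row matrix gives a column matrix. The canonical reflexivity isomorphism $P^{**} \cong P$ for the finitely generated projective module $P$ then lets us identify $(0 \to P^{**})$ with $(0 \to P)$.

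Since the duality reverses arrows and preserves almost split sequences, the dualized sequence is exactly the one claimed in the statement, and it starts at $(0 \to P)$ as required. The only real checking is the bookkeeping of the matrix transpose under duality and the identification $P^{**} \cong P$; neither of these is substantive, so I do not expect a serious obstacle. If anything, the most delicate point is confirming that the lifting $e$ dualizes correctly so that the middle object is $(P_0^* \xrightarrow{[g^*~~e^*]^t} P_1^* \oplus P^*)$ and not some twisted variant, but this is forced by the uniqueness of almost split sequences in $\CP(\La)$.
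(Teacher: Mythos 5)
Your proposal is correct and follows essentially the same route as the paper: apply Proposition \ref{minimal-appro-projective-dom} to the indecomposable projective $P^*$ over $\La^{\rm op}$, then transport the resulting almost split sequence back to $\CP(\La)$ via the duality $(-)^*$ together with the identification $P^{**}\simeq P$. The bookkeeping you flag (the transpose of the row matrix $[g~~e]$ becoming the column matrix $[g^*~~e^*]^t$) is exactly what the paper's proof implicitly relies on, so there is nothing missing.
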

\begin{proof}
By applying Proposition \ref{minimal-appro-projective-dom} to the projective module $P^*$, we obtain the following almost split sequence in $\CP(\La^{\rm op})$
   	$$\xymatrix@1{  0\ar[r] & {\left(\begin{smallmatrix} P_1\\ P_0\end{smallmatrix}\right)}_{g}
			\ar[rr]^-{\left(\begin{smallmatrix} [1~~0]^t\\ 1\end{smallmatrix}\right)}
			& & {\left(\begin{smallmatrix}P_1\oplus P\\  P_0 \end{smallmatrix}\right)}_{[g~~e]}\ar[rr]^-{\left(\begin{smallmatrix} [0~~1] \\ 0\end{smallmatrix}\right)}& &
		{\left(\begin{smallmatrix}P^*\\0\end{smallmatrix}\right)}_{0}\ar[r]& 0 }.$$
  Applying  the duality $(-)^*$ to the almost split sequence and using the  isomorphism $P^{**}\simeq P$, we obtain the desired almost split sequence.
\end{proof}

When $P$ is an indecomposable non-injective module, there is an alternative method  to compute the almost spit sequence in $\CP$ starting at $(0 \rt P)$.

\begin{proposition}\label{minimal-appro-projective-dual}
Let $P$ be an indecomposable non-injective projective module.
  Let
 $ 0 \rt P\st{u}\rt B\st{v} \rt \tau^{-1}_{\La} P\rt 0$ be an almost split sequence  in $\mmod \La$. Consider the following commutative pull-back diagram
	
		$$\xymatrix{		   & \Omega_{\La}(\tau^{-1}_{\La} P) \ar[d]^{h}
		\ar@{=}[r] & \Omega_{\La}(\tau^{-1}_{\La}P) \ar[d]^{i}  \\
		 \ar@{=}[d] \ar[r]^{\left[\begin{smallmatrix} 1\\ 0\end{smallmatrix}\right]} P& P\oplus Q_0
		\ar[d]^d \ar[r]^{\left[\begin{smallmatrix} 0 & 1\end{smallmatrix}\right]} & Q_0\ar[d]  \\
		 P \ar[r]^{u} & B
		 \ar[r]^{v} & \tau^{-1}_{\La}P }	$$
		Then, the following short exact sequence is an almost split sequence in $\CH$,
		$$\xymatrix@1{ \ \ 0\ar[r] & {\left(\begin{smallmatrix} 0\\ P\end{smallmatrix}\right)}_{0}
			\ar[rr]^-{\left(\begin{smallmatrix} 0\\ \left[\begin{smallmatrix} 1\\ 0\end{smallmatrix}\right]\end{smallmatrix}\right)}
			& & {\left(\begin{smallmatrix}Q_1\\ P\oplus Q_0 \end{smallmatrix}\right)}_{hj}\ar[rr]^-{\left(\begin{smallmatrix} 1 \\ \left[\begin{smallmatrix} 0 & 1\end{smallmatrix}\right]\end{smallmatrix}\right)}& &
			{\left(\begin{smallmatrix}Q_1\\Q_0\end{smallmatrix}\right)}_{ij}\ar[r]& 0 },$$
		
\noindent		
		 where $Q_1\st{j}\rt \Omega_{\La}(\tau^{-1}_{\La} P)\rt 0 $ a projective cover.   In particular, it is an almost split sequence in $\CP.$

	\end{proposition}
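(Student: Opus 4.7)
The plan is to reduce the claim to Proposition \ref{Reminder:0M} applied to the object $(Q_1 \st{ij}\rt Q_0)$ of $\CH$. First I would verify that this object is indecomposable and non-projective in $\CH$. Because $j:Q_1\rt \Omega_{\La}(\tau^{-1}_{\La}P)$ is a projective cover and $i$ is the canonical inclusion into $Q_0$, the sequence $Q_1\st{ij}\rt Q_0\rt \tau^{-1}_{\La}P\rt 0$ is a minimal projective presentation of $\tau^{-1}_{\La}P$; since $P$ is non-injective, $\tau^{-1}_{\La}P$ is a nonzero indecomposable, so by the lemma opening Section \ref{Section6} the object $(Q_1\st{ij}\rt Q_0)$ is indecomposable in $\CP$, and by Lemma \ref{proj-inde-p} it is not projective in $\CH$.

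Next I would match the pullback diagram in the statement with the one in Proposition \ref{Reminder:0M}. Setting $M=\tau^{-1}_{\La}P$, the canonical epi-mono factorization of $ij$ is the given one, with $p_1=j$ and $i$ as displayed, and $\tau_{\La}M=P$ because $P$ is an indecomposable non-injective projective. Taking the almost split sequence $0\rt P\st{u}\rt B\st{v}\rt \tau^{-1}_{\La}P\rt 0$ as the bottom row, the pullback of $v$ along $p_0:Q_0\rt \tau^{-1}_{\La}P$ is exactly $P\oplus Q_0$ with the maps shown; this identifies the diagram of the statement with the one in Proposition \ref{Reminder:0M}.

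Substituting these identifications into the conclusion of Proposition \ref{Reminder:0M} delivers precisely the short exact sequence written in the statement as an almost split sequence in $\CH$, with $\tau_{\CH}(Q_1\st{ij}\rt Q_0)=(0\rt P)$. Finally, to upgrade to $\CP$, observe that all three terms lie in $\CP$ (since $P\oplus Q_0$ is projective), and $\CP$ is a full extension-closed subcategory of $\CH$; thus any non-retraction in $\CP$ ending at $(Q_1\st{ij}\rt Q_0)$ is in particular a non-retraction in $\CH$, so it factors through the middle term, giving the right almost split property in $\CP$. Alternatively, since $(0\rt P)$ is indecomposable one may invoke Theorem \ref{KP-ext-proj}(c) directly.

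I do not anticipate a serious obstacle: the substantive work has already been done in Proposition \ref{Reminder:0M}, and the only mild point to check is that the pullback datum in the statement is genuinely the pullback datum of that proposition applied to $(Q_1\st{ij}\rt Q_0)$; this boils down to the equality $\tau_{\La}\tau^{-1}_{\La}P=P$, which is valid precisely because $P$ is an indecomposable projective that is not injective.
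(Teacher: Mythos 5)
Your proposal is correct and follows exactly the paper's route: the paper's entire proof of this proposition is the one-line reduction to Proposition \ref{Reminder:0M}, applied (as you do) to the minimal projective presentation $(Q_1\st{ij}\rt Q_0)$ of $\tau^{-1}_{\La}P$, using $\tau_{\La}\tau^{-1}_{\La}P\cong P$. Your additional checks (indecomposability and non-projectivity of $(Q_1\st{ij}\rt Q_0)$, identification of the pullback, and the descent of the almost split property to the full extension-closed subcategory $\CP$) are all sound and merely make explicit what the paper leaves implicit.
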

\begin{proof}
     It follows from Proposition \ref{Reminder:0M}.
\end{proof}

\section{Some application: $g$-vectors}

In this section, we collect some  consequence for $\mmod \La$ by making use  of our study of the morphism category $\CP$  established in the preceding sections. While some applications  can  be proved directly, it is interesting to observe them in terms of the morphism category. Lastly, we will provide some results about $g$-vectors, which are important in $\tau$-tilting theory as introduced in \cite{AIR}.

\hspace{ 1 mm}

We begin by presenting  two immediate consequences of our results:
\begin{proposition}
    Let $P$ be an indecomposable projective module.    Then, the following assertions hold.
    \begin{itemize}
   \item  [$(1)$] If $P$ is injective, then $P\simeq \nu_{\La}(\nu_{\La^{\rm op}}P^*)^*$;

 \item  [$(2)$] If $P$ is non-injective, then  there is the following exact sequence in $\mmod \La,$
$$ 0\rt  (\nu_{\La^{\rm op}}P^*)^*\rt P^*_0\st{g^*}\rt P_1^*\rt \tau^{-1}_{\La}P\rt 0,$$     where  $ P_1\st{g}\rt P_0\rt \nu_{\La^{op}} P^*\rt 0 $ is  a minimal projective presentation in $\mmod \La^{\rm op}$.
     \end{itemize}
\end{proposition}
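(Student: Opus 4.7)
The plan is to derive both statements by matching the almost split sequence in $\CP(\La)$ starting at $(0 \rt P)$ given by Proposition \ref{starting0P} against the alternative descriptions coming from Propositions \ref{proj-inj-indecom} and \ref{minimal-appro-projective-dual}. The key observation is that almost split sequences are unique up to isomorphism, so the rightmost terms of the two descriptions must coincide in $\CP$, and passing to cokernels (or simply reading off components) extracts the desired information about $P$.

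For part $(2)$, suppose $P$ is non-injective. By Proposition \ref{minimal-appro-projective-dual}, the almost split sequence in $\CP$ starting at $(0\rt P)$ has rightmost term ${\left(\begin{smallmatrix} Q_1\\ Q_0\end{smallmatrix}\right)}_{ij}$, where $i:\Omega_\La(\tau^{-1}_\La P) \hookrightarrow Q_0$ is the inclusion into the projective cover of $\tau^{-1}_\La P$ and $j$ is a projective cover of the syzygy; in particular ${\rm Cok}(ij) \simeq \tau^{-1}_\La P$. Proposition \ref{starting0P} presents the same sequence with rightmost term ${\left(\begin{smallmatrix} P_0^*\\ P_1^*\end{smallmatrix}\right)}_{g^*}$, so uniqueness together with applying the cokernel functor yields ${\rm Cok}(g^*)\simeq \tau^{-1}_\La P$. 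On the other hand, applying the left-exact functor $(-)^*=\Hom_{\La^{\rm op}}(-,\La)$ to the minimal projective presentation $P_1\xrightarrow{g} P_0 \rt \nu_{\La^{\rm op}} P^* \rt 0$ in $\mmod \La^{\rm op}$ produces an exact sequence $0\rt (\nu_{\La^{\rm op}} P^*)^*\rt P_0^* \xrightarrow{g^*} P_1^*$. Splicing these two pieces along $g^*$ yields the desired four-term exact sequence.

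For part $(1)$, suppose $P$ is injective; then $P$ is indecomposable projective-injective, so Proposition \ref{proj-inj-indecom} describes the almost split sequence starting at $(0\rt P)$ with rightmost term $(Q\rt 0)$, where $Q$ is an indecomposable projective module with $P\simeq \nu_\La Q$. Comparing with the description in Proposition \ref{starting0P} and invoking uniqueness, there is an isomorphism ${\left(\begin{smallmatrix} P_0^*\\ P_1^*\end{smallmatrix}\right)}_{g^*}\simeq (Q\rt 0)$ in $\CP$. Reading off components, $P_1^*=0$, hence $P_1=0$ since $(-)^*$ is a duality between ${\rm prj}\mbox{-}\La^{\rm op}$ and ${\rm prj}\mbox{-}\La$, and $Q\simeq P_0^*$. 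Because $P_1=0$, the minimal projective presentation of $\nu_{\La^{\rm op}} P^*$ collapses to $P_0\simeq \nu_{\La^{\rm op}} P^*$. Therefore $(\nu_{\La^{\rm op}} P^*)^*\simeq P_0^*\simeq Q$, and consequently $\nu_\La(\nu_{\La^{\rm op}} P^*)^*\simeq \nu_\La Q\simeq P$, as required.

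The main technical point that warrants care is the verification ${\rm Cok}(ij)\simeq\tau^{-1}_\La P$ in Proposition \ref{minimal-appro-projective-dual}: this amounts to observing that $Q_0$ is the projective cover of $\tau^{-1}_\La P$, that $j$ is surjective onto $\Omega_\La(\tau^{-1}_\La P)$, and hence that $Q_1\xrightarrow{ij}Q_0\rt\tau^{-1}_\La P\rt 0$ is a projective presentation of $\tau^{-1}_\La P$. Beyond that, both parts of the proof are a routine assembly of the morphism-category data supplied by the preceding propositions.
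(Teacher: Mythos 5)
Your proposal is correct and follows essentially the same route as the paper: the paper's own (much terser) proof also compares the almost split sequence starting at $(0\rt P)$ from Proposition \ref{starting0P} with the descriptions in Propositions \ref{proj-inj-indecom} and \ref{minimal-appro-projective-dual}, invoking uniqueness of almost split sequences. You have merely filled in the details (the identification ${\rm Cok}(ij)\simeq\tau^{-1}_{\La}P$, the left-exactness of $(-)^*$, and the collapse $P_1=0$ in the injective case) that the paper leaves implicit.
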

\begin{proof}
$(1)$    According to Proposition \ref{proj-inj-indecom}, there is the following  almost split sequence in $\CP(\La)$

$$\xymatrix@1{  \ \ 0\ar[r] & {\left(\begin{smallmatrix} 0\\ P\end{smallmatrix}\right)}_{0}
			\ar[rr]^-{\left(\begin{smallmatrix} 0\\ 1\end{smallmatrix}\right)}
			& & {\left(\begin{smallmatrix}Q\\ P \end{smallmatrix}\right)}_{f}\ar[rr]^-{\left(\begin{smallmatrix} 1 \\ 0\end{smallmatrix}\right)}& &
			{\left(\begin{smallmatrix}Q\\0\end{smallmatrix}\right)}_{0}\ar[r]& 0 }$$
   with $\nu_{\La}Q=P.$ Now, by Proposition \ref{starting0P}, $Q\simeq (\nu_{\La^{\rm op}}P^*)^*$.

   $(2)$ This follows directly from Propositions \ref{starting0P} and \ref{minimal-appro-projective-dual}.
\end{proof}
We recall that  an indecomposable  module $M$ is preprojective if there is an indecomposable projective module $P$ such that $M=\tau^{-t} P$ for some $t\geq 0$. An arbitrary module is called preprojective if it is a direct sum of indecomposable preprojective A-modules. Let $\CP r(\La)$ denote  the subcategory of $\mmod \La$ consisting of all preprojective modules. Dually,   preinjective modules can be defined, and we denote by $\CP i(\La)$ the subcategory of $\mmod \La$ consisting of all preinjective modules.

\begin{proposition}
The equivalence ${\rm Tr}$ is restricted to an equivalence form $\underline{\CP r(\La)}$ to $\underline{\CP i(\La^{\rm op})}$.
\end{proposition}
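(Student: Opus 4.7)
The plan is to derive the claim from the classical identity $\tau_{\La} = D \circ {\rm Tr}_{\La}$, where $D$ denotes the standard duality of the Artin algebra $\La$. Since the composition in \eqref{Tran-Eq} already realizes ${\rm Tr}$ as an equivalence between $\underline{\rm mod}\mbox{-}\La$ and $\underline{\rm mod}\mbox{-}\La^{\rm op}$, the only thing to verify is that it sends the subcategory $\underline{\CP r(\La)}$ into $\underline{\CP i(\La^{\rm op})}$, and symmetrically for the quasi-inverse ${\rm Tr}_{\La^{\rm op}}$; essential surjectivity then follows at once from the resulting bijection on indecomposable non-zero objects.

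First I would rewrite the identity as ${\rm Tr}_{\La}(M) \simeq D(\tau_{\La} M)$, obtained by applying $D$ to both sides of $\tau_{\La} M = D({\rm Tr}_{\La} M)$. Applying the same identity to the opposite algebra gives $\tau_{\La^{\rm op}}(DX) \simeq D(\tau_{\La}^{-1} X)$, and by induction $\tau^m_{\La^{\rm op}}(DX) \simeq D(\tau^{-m}_{\La} X)$ for every $m \geq 0$. I would also record the standard fact that $D$ exchanges indecomposable projective $\La$-modules with indecomposable injective $\La^{\rm op}$-modules.

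With these identities in hand, the verification is a direct computation. For an indecomposable non-projective preprojective $M = \tau^{-n}_{\La} P$ (with $P$ indecomposable projective and $n \geq 1$), I would compute
$${\rm Tr}_{\La}(\tau^{-n}_{\La} P) \simeq D(\tau_{\La}\tau^{-n}_{\La} P) \simeq D(\tau^{-(n-1)}_{\La} P) \simeq \tau^{n-1}_{\La^{\rm op}}(DP),$$
which is preinjective in $\mmod \La^{\rm op}$ because $DP$ is indecomposable injective there. Dually, for an indecomposable non-injective preinjective $N = \tau^m_{\La^{\rm op}} I$ with $m \geq 0$ and $I$ indecomposable injective in $\mmod \La^{\rm op}$, the mirror computation gives ${\rm Tr}_{\La^{\rm op}}(N) \simeq \tau^{-(m+1)}_{\La}(DI)$, which is preprojective in $\mmod \La$ because $DI$ is indecomposable projective there.

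The main point to watch --- more a matter of bookkeeping than a real obstacle --- is the compatibility between the morphism-category definition of ${\rm Tr}$ in \eqref{Tran-Eq} and the classical formula $\tau = D \circ {\rm Tr}$ used above. This is essentially automatic: for a module $M$ with minimal projective presentation $P_1 \to P_0$, the image of the object $(P_1 \to P_0)$ under $(-)^*$ is $(P_0^* \to P_1^*) \in \CP(\La^{\rm op})$, whose cokernel is the classical transpose of $M$. A fully internal alternative would be to induct on $n$ and compute ${\rm Tr}$ on $\tau^{-n}_{\La} P$ via the explicit almost split sequences of Proposition \ref{minimal-appro-projective-dual} transported across the duality $(-)^*$, but the classical identity gives the most transparent argument.
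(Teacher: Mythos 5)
Your argument is correct, but it takes a genuinely different route from the paper's. You work entirely in $\mmod \La$ and $\mmod \La^{\rm op}$: after observing that the composite \eqref{Tran-Eq} computes the classical transpose, you reduce everything to the identities ${\rm Tr}\,M\simeq D(\tau_{\La}M)$ and $\tau^{m}_{\La^{\rm op}}(DX)\simeq D(\tau^{-m}_{\La}X)$ and push $\tau$-orbits through $D$. The paper instead argues inside the morphism category: via the embedding of $\Gamma_{\La}$ into $\Gamma_{\CP}$ from Subsection \ref{FromArtoCP}, indecomposable preprojective (resp.\ preinjective) modules correspond to the $\tau_{\CP}$-orbits of the objects $(0\rt P)$ (resp.\ $(P\rt 0)$); since the duality $(-)^*$ sends $(0\rt P)$ to $(P^*\rt 0)$ and preserves almost split sequences, it exchanges the two families of orbits, and applying $\overline{\rm Cok}_{\La^{\rm op}}$ yields the statement. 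Your proof is shorter and rests only on classical Auslander--Reiten identities, but it bypasses the morphism-category machinery that this section is meant to illustrate; the paper's proof buys the uniform picture in which preprojectives and preinjectives literally become $\tau_{\CP}$-orbits of the two types of indecomposable projective, resp.\ injective, objects of $\CP$ interchanged by $(-)^*$. One small bookkeeping point on your side: in the preinjective direction you should take $N=\tau^{m}_{\La^{\rm op}}I$ with $N$ non-\emph{projective} (rather than non-injective), since $\underline{\CP i(\La^{\rm op})}$ lives in the projectively stable category and ${\rm Tr}_{\La^{\rm op}}(N)$ vanishes exactly when $N$ is projective; this does not affect the validity of the argument.
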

\begin{proof}
In view of subsection \ref{FromArtoCP},  we can consider $\Gamma_{\La}$ as a full subquiver of $\Gamma_{\CP}$. Under this embedding, we observe that  indecomposable preprojective, resp. preinjective, modules  correspond to the $\tau_{\CP}$-orbits of indecomposable projective, resp. injective,  objects of the form $(0 \rt P)$, resp. $(P\rt 0)$, for some indecomposable projective module $P$. Since $(0 \rt P)^*=(P^*\rt 0)$, we deduce that under the duality $(-)^*:\CP(\La)\rt \CP(\La^{\rm op})$ the $\tau_{\CP}$-orbit of $(0 \rt P)$ is mapped to the $\tau_{\CP}$-orbit of $(P^*\rt 0)$. Next, using the  following  description of the transpose functor ${\rm Tr}$ in terms of the morphism categories, as given in  \ref{Tran-Eq},
 $$ \underline{{\rm mod}}\mbox{-} \La \st{\overline{\rm Cok}^{-1}_{\La}} \rt \CP(\La)/\mathcal{Y}\st{(-)^*}\rt \CP(\La^{\rm op})/\mathcal{Y}'\st{\overline{\rm Cok}_{\La^{\rm op}}}\rt \underline{{\rm mod}}\mbox{-} \La^{\rm op},$$
\noindent
we can prove  the result.
\end{proof}

\subsection{minimal projective presentation}
In Proposition  \ref{Cok-preserve-almost-split},  we investigated sending  an almost split sequence in $\CP$ to $\mmod \La$ by taking the cokernel. Below, we will explore the inverse process by taking the minimal projective presentation.

To do so, we first require  the following construction from \cite[Appendix A]{E}.

\begin{construction}\label{projective Cover}Let ${\rm X}=(M\st{f}\rt N)$ be an object in $\mathcal{H}$,  and let $P\st{\alpha}\rt M$ and   $Q\st{\beta} \rt \text{Cok}(f)$ be projective covers. By the  projectivity, there exists $\delta:Q\rt N$ such that $p \delta=\beta$, where $p:N\rt \text{Cok}(f)$ is the canonical epimorphism. Then, by \cite[Theorem A1]{E}, the morphism
	$$\left(\begin{smallmatrix}
\alpha\\ \left[\begin{smallmatrix}
f\alpha & \delta
\end{smallmatrix}\right]
\end{smallmatrix}\right):\left(\begin{smallmatrix}
P\\ P\oplus Q
\end{smallmatrix}\right)_{\left[\begin{smallmatrix}
	1\\ 0
	\end{smallmatrix}\right]}\rt \left(\begin{smallmatrix}M\\  N\end{smallmatrix}\right)_{f}$$	
provides the projective cover of ${\rm X}$ in $\mathcal{H}$.

\end{construction}

We will apply the above construction to construct  the minimal projective presentation in $\CH$.
\begin{construction}\label{Construction1}
    Let $\epsilon: 0 \rt  L \st{a}\rt N \st{b}\rt M \rt 0 $ be a short exact sequence in $\mmod \La$. By using the horseshoe lemma, we obtain  the following commutative diagram with exact columns and rows:
\begin{equation}\label{Diagram11}
\xymatrix{	0 \ar[r] & P' \ar[d]^g \ar[r] & P'\oplus P \ar[d]^h
\ar[r] &  P\ar[d]^f \ar[r]\ & 0\\
	0 \ar[r] & Q' \ar[d]^l \ar[r] & Q'\oplus Q
	\ar[d]^d \ar[r] & Q \ar[d]^p \ar[r]\ar@{.>}[ld]^{\delta} & 0\\
0 \ar[r]  & L \ar[d] \ar[r]^a & N
	\ar[d] \ar[r]^b & M  \ar[d] \ar[r] & 0\\
& 0  & 0  & 0 & }
\end{equation}
In view of Remark \ref{Remark-Split}, we can present  the morphism $h$ as  ${h}= \tiny {\left[\begin{array}{ll} f & q \\ 0 & g \end{array} \right]}$ with $q:P \rt Q'$. The first two top rows give rise to  the following short exact sequence in $\CP$
$$\xymatrix@1{ E: \ \ 0\ar[r] & {\left(\begin{smallmatrix} P'\\ Q'\end{smallmatrix}\right)}_{g}
			\ar[r]
			&  {\left(\begin{smallmatrix}P'\oplus P\\ Q'\oplus Q\end{smallmatrix}\right)}_{h}\ar[r]&
			{\left(\begin{smallmatrix}P\\Q\end{smallmatrix}\right)}_{f}\ar[r]& 0 }.$$

We  can interpret the above construction in terms of minimal protective presentations in the morphism category as follows: we can write $d=[al ~~\delta]$, where $\delta:Q\rt N$ with  $p=b \delta.$  According to Construction \ref{projective Cover},  we have the following projective cover in $\CH$

\begin{equation}\label{firstsyzygy11}
\left(\begin{smallmatrix}
l\\ d
\end{smallmatrix}\right):\left(\begin{smallmatrix}
Q'\\ Q'\oplus Q
\end{smallmatrix}\right)_{\left[\begin{smallmatrix}
	1\\ 0
	\end{smallmatrix}\right]}\rt \left(\begin{smallmatrix}L\\  N\end{smallmatrix}\right)_{a}
\end{equation}
We can write $h$ as the composite of $j$ and $d'$ as in the following diagram:
\begin{equation*}\label{Diagram1}
\xymatrix{	0 \ar[r] & P' \ar[d]^{l'}\ar[r] & P'\oplus P \ar[d]^{d'}
\ar[r] &  P\ar[d]^{p'} \ar[r]\ar@{.>}[ld]^{\delta'} & 0\\
	0 \ar[r] & \Omega_{\La}(L) \ar[d]^i \ar[r]^{\Omega a} & {\rm Ker}(d)
	\ar[d]^j \ar[r] & \Omega_{\La} (M) \ar[d]^k \ar[r] & 0\\
0 \ar[r]  & Q'  \ar[r] & Q'\oplus Q
	 \ar[r] & Q   \ar[r] & 0 }
\end{equation*}
where $g=i l', h=jd', f=kp'$ and $d'=[l'~~\delta']$. Moreover, again applying Construction \ref{projective Cover}, we have  the following projective cover in $\CH$

 \begin{equation}\label{firstsyzygy2}
 \left(\begin{smallmatrix}
l'\\ d'
\end{smallmatrix}\right):\left(\begin{smallmatrix}
P'\\ P'\oplus P
\end{smallmatrix}\right)_{\left[\begin{smallmatrix}
	1\\ 0
	\end{smallmatrix}\right]}\rt \left(\begin{smallmatrix}  \Omega_{\La}(L) \\ {\rm Ker} (d)\end{smallmatrix}\right)_{\Omega a}
\end{equation}

Since $\Omega_{\CH}(L\st{a}\rt N)=(\Omega_{\La}(L)\st{\Omega a}\rt {\rm Ker}(d))$, by combining  \ref{firstsyzygy11} and \ref{firstsyzygy2}, we obtain the following minimal projective presentation of $\left(\begin{smallmatrix}
L\\ N
\end{smallmatrix}\right)_a$ in $\CH$

\begin{equation}\label{firstsyzygy1}
\left(\begin{smallmatrix}
P'\\ P'\oplus P
\end{smallmatrix}\right)_{\left[\begin{smallmatrix}
	1\\ 0
	\end{smallmatrix}\right]}\st{\left(\begin{smallmatrix}
g\\ h
\end{smallmatrix}\right)}\rt\left(\begin{smallmatrix}
Q'\\ Q'\oplus Q
\end{smallmatrix}\right)_{\left[\begin{smallmatrix}
	1\\ 0
	\end{smallmatrix}\right]}\st{\left(\begin{smallmatrix}
l\\ d
\end{smallmatrix}\right)}\rt \left(\begin{smallmatrix}L\\  N\end{smallmatrix}\right)_{a} \rt 0.
\end{equation}
\end{construction}

 Viewing the upper half part of Diagram \ref{Diagram11} as the minimal projective presentation of an object in $\CH$ helps us to prove the following uniqueness, up to isomorphism.

\begin{lemma}\label{Lemma-minim-Mor}
 Suppose  $r:P'\oplus P\rt Q'\oplus Q$ is  another morphism that  satisfies the commutativity in Diagram \ref{Diagram11}. Then, there exists  isomorphisms $\phi_i, i=1,2,3, 4$  such that  the following  diagram commutes:

\[\xymatrix@C 1.1pc@R 1.5pc{ & P \ar[rr] \ar@{<-}[dl]^{\phi_3} \ar@{->}[dd]^>>>>{f} && P\oplus P' \ar@{<-}[dl]^{\phi_4} \ar@{->}[dd]^>>>>>{r}\\
P\ar@{->}[dd]^>>>>>>>>f \ar@{->}'[r][rr] && P\oplus P'\ar@{->}[dd]^>>>>>>>>{h}  &\\
& Q \ar[rr] \ar@{<-}[dl]_>>>>>{\phi_1}  && {Q\oplus Q'}\ar@{<-}[dl]^{\phi_2}\\
Q \ar@{->}[rr] && Q\oplus Q'&
}\]
\end{lemma}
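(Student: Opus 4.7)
The plan is to reduce the claim to an automorphism question by analysing the block form of $h$ and $r$, and then invoke the universal property of projective covers.

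First, commutativity of $h$ and $r$ with the canonical inclusions and projections of the split exact sequences $0 \to P' \to P' \oplus P \to P \to 0$ and $0 \to Q' \to Q' \oplus Q \to Q \to 0$ forces both morphisms to be upper--triangular:
\[
h = \begin{pmatrix} g & q_1 \\ 0 & f \end{pmatrix}, \qquad r = \begin{pmatrix} g & q_2 \\ 0 & f \end{pmatrix},
\]
for suitable $q_1,q_2 \colon P \to Q'$. The exactness of the middle column of Diagram \ref{Diagram11} further forces $dh = dr = 0$. Writing $d = [al~~\delta]$ as in Construction \ref{Construction1}, this yields $alq_i = -\delta f$ for $i=1,2$, and hence $al(q_1 - q_2) = 0$. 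Since $a$ is a monomorphism, $l(q_1 - q_2) = 0$, so $q_1 - q_2 \colon P \to Q'$ factors through the inclusion $\iota \colon \Ker l \hookrightarrow Q'$.

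Secondly, I would invoke the factorisation $g = \iota g'$, where $g' \colon P' \twoheadrightarrow \Ker l$ is the projective cover. The induced map $P \to \Ker l$ lifts along $g'$, by projectivity of $P$, to some $\psi \colon P \to P'$ satisfying $g\psi = q_1 - q_2$. With this $\psi$ in hand, set
\[
\phi_4 := \begin{pmatrix} 1 & \psi \\ 0 & 1 \end{pmatrix} \colon P' \oplus P \longrightarrow P' \oplus P,
\]
which is automatically an automorphism, and take $\phi_3 := 1_{P'}$, $\phi_1 := 1_{Q'}$, $\phi_2 := 1_{Q' \oplus Q}$.

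Finally, I would verify by direct matrix computation that the resulting cube commutes. Compatibility with the horizontal inclusions follows immediately from the block form of $\phi_4$, and the decisive identity $\phi_2 \cdot h = r \cdot \phi_4$ collapses to
\[
r\phi_4 = \begin{pmatrix} g & g\psi + q_2 \\ 0 & f \end{pmatrix} = \begin{pmatrix} g & q_1 \\ 0 & f \end{pmatrix} = h,
\]
using $g\psi = q_1 - q_2$. The sole non-formal ingredient is the existence of $\psi$, and that is nothing more than the lifting property of the projective module $P$ against the cover $g' \colon P' \twoheadrightarrow \Ker l$; I expect no further obstacles in the argument.
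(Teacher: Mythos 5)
Your proof is correct, but it takes a genuinely different route from the paper. The paper disposes of the lemma in two lines: by Construction \ref{Construction1}, each of $h$ and $r$ assembles (together with $g$) into a \emph{minimal} projective presentation of the object $(L\st{a}\rt N)$ in $\CH$, and the uniqueness of minimal projective presentations in $\CH\simeq \mmod T_2(\La)$ gives the isomorphism of the two middle terms. You instead argue by hand: commutativity of the two upper squares forces the block forms $h=\left(\begin{smallmatrix} g & q_1\\ 0 & f\end{smallmatrix}\right)$ and $r=\left(\begin{smallmatrix} g & q_2\\ 0 & f\end{smallmatrix}\right)$; the relations $dh=dr=0$ with $d=[al~~\delta]$ give $al(q_1-q_2)=0$, hence $l(q_1-q_2)=0$ since $a$ is monic; exactness of the left column at $Q'$ lets you lift $q_1-q_2$ through $P'\twoheadrightarrow \Ker l$ to get $\psi$ with $g\psi=q_1-q_2$; and conjugating by the unipotent automorphism $\left(\begin{smallmatrix} 1 & \psi\\ 0 & 1\end{smallmatrix}\right)$ finishes. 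This is more elementary and in fact slightly more general, since you never use minimality of the presentations of $L$ and $M$, only exactness; the price is that the computation is specific to this situation, whereas the paper's argument is a uniform consequence of machinery it has already set up. Two small points to tidy: your $\phi_3$ and $\phi_1$ should be $1_P$ and $1_Q$ (the identity on the quotient terms of the split rows), not $1_{P'}$ and $1_{Q'}$; and you should state explicitly that "satisfies the commutativity of Diagram \ref{Diagram11}" is being read as including $d\circ r=0$ (i.e.\ $r$ is another horseshoe filler), since the two commuting squares alone do not imply it --- the paper's own proof makes the same implicit assumption, and the lemma is false without it.
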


\begin{proof}
Both morphisms, $h$ and $r$, induce a minimal projective presentation of $(L\xrightarrow{a}N)$, as detailed in the preceding construction. The uniqueness of these minimal projective presentations within $\CH$ leads directly to the stated result.
\end{proof}
Therefore, by  the above lemma, we can conclude that $h$ in the above diagram  can be determined up to isomorphism.

\hspace{ 1 mm}

Furthermore, suppose that  the short exact sequence $\epsilon$ in Construction \ref{Construction1} is an almost split sequence.  According to Lemma \ref{Lemma-minim-Mor} and Theorem \ref{Cok-ass}, we can know that the short exact sequence $E$ is almost split in $\CP.$  Since $\nu:\CP(\La)\rt \CI(\La)$ is an equivalence, as shown in  \ref{injective-morphism}, we get  the following almost split sequence in $\CI(\La)$
$$\xymatrix@1{ \nu E:   \ \ 0\ar[r] & {\left(\begin{smallmatrix} \nu P'\\ \nu Q'\end{smallmatrix}\right)}_{\nu g}
			\ar[r]
			&  {\left(\begin{smallmatrix}\nu P'\oplus \nu P\\ \nu Q'\oplus \nu Q\end{smallmatrix}\right)}_{\nu h}\ar[r]&
			{\left(\begin{smallmatrix}\nu P\\\nu Q\end{smallmatrix}\right)}_{\nu f}\ar[r]& 0 }.$$
Moreover, since $(-)^*:\CP(\La)\rt \CP(\La^{\rm op})$ is a duality, we obtain  the following almost split sequence in $\CP(\La^{\rm op})$

$$\xymatrix@1{ E^*: \ \ 0\ar[r] & {\left(\begin{smallmatrix} Q^*\\ P^*\end{smallmatrix}\right)}_{f^*}
			\ar[r]
			&  {\left(\begin{smallmatrix}(Q')^*\oplus Q^*\\ (P')^*\oplus P^*\end{smallmatrix}\right)}_{h^*}\ar[r]&
			{\left(\begin{smallmatrix}(Q')^*\\(P')^*\end{smallmatrix}\right)}_{g^*}\ar[r]& 0 }.$$

\begin{proposition}
 Keeping in mind the above notation.   Let $\epsilon: 0 \rt  L \st{a}\rt N \st{b}\rt M \rt 0 $ be an almost split sequence in $\mmod \La$.  Assume that $L$ is not projective.  Then the following assertions hold.
    \begin{itemize}
        \item [$(i)$] $0 \rt \tau L \st{\tau a}\rt  \tau N \st{\tau b}\rt \tau M\rt 0$
 is an almost split sequence in $\mmod \La$, which is obtained by applying  the kerenl functor on $\nu E.$
        \item [$(ii)$] $0 \rt {\rm Tr} L \st{{\rm Tr} a}\rt  {\rm Tr} N \st{\rm Tr b}\rt {\rm Tr} M\rt 0$
 is an almost split sequence in $\mmod \La$, which is obtained by applying  the cokernel  functor on $E^*.$
    \end{itemize}
\end{proposition}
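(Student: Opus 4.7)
The strategy is to apply the two ``functoriality'' results of Section~\ref{Section4} to the almost split sequences $\nu E$ in $\CI(\La)$ and $E^*$ in $\CP(\La^{\rm op})$, using in each case the non-projectivity of $L$ to rule out the single exceptional shape excluded by those propositions, and then invoking uniqueness of almost split sequences to identify all three terms.

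For (i), since $L$ is indecomposable and non-projective, its minimal projective presentation $P'\xrightarrow{g} Q'\to L\to 0$ has $P'\neq 0$, and since the Nakayama functor restricts to an equivalence ${\rm prj}\mbox{-}\La\simeq{\rm inj}\mbox{-}\La$, also $\nu P'\neq 0$. Hence the leftmost term $(\nu P'\xrightarrow{\nu g}\nu Q')$ of $\nu E$ is not of the form $(0\to I)$, and Proposition~\ref{Cok-ass} applies to yield that ${\rm Ker}(\nu E)$ is an almost split sequence in $\mmod\La$. By the very definition of the Auslander--Reiten translate applied to the minimal projective presentations of $L$ and $M$, the outer kernels are ${\rm Ker}(\nu g)=\tau L$ and ${\rm Ker}(\nu f)=\tau M$. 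Since the almost split sequence ending at $\tau M$ is unique up to isomorphism, its middle term must be $\tau N$, and the induced morphisms are $\tau a$ and $\tau b$.

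For (ii), the same observation $P'\neq 0$ gives $(P')^*\neq 0$, because $(-)^*$ restricts to a duality between ${\rm prj}\mbox{-}\La$ and ${\rm prj}\mbox{-}\La^{\rm op}$. Thus the rightmost term $((Q')^*\xrightarrow{g^*}(P')^*)$ of $E^*$ is not of the form $(P\to 0)$ in $\CP(\La^{\rm op})$, and Proposition~\ref{Cok-preserve-almost-split}, applied over $\La^{\rm op}$, produces an almost split sequence ${\rm Cok}(E^*)$ in $\mmod\La^{\rm op}$. By definition of the transpose, its outer cokernels are ${\rm Cok}(f^*)={\rm Tr}M$ and ${\rm Cok}(g^*)={\rm Tr}L$; uniqueness of almost split sequences forces the middle term to be ${\rm Tr}N$, with the induced morphisms identified as ${\rm Tr}a$ and ${\rm Tr}b$.

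The main delicate point is not any calculation but bookkeeping: one must make sure that the projective presentations of $L$ and $M$ appearing inside Construction~\ref{Construction1} are the minimal ones, so that $\nu$ and $(-)^*$ compute $\tau$ and ${\rm Tr}$ on the nose, without introducing spurious injective or projective summands. This is precisely why the non-projectivity assumption on $L$ enters: it guarantees $P'\neq 0$, which in turn guarantees that neither $\nu E$ nor $E^*$ hits the exceptional case of the respective Section~\ref{Section4} proposition. Once this is in place, the rest is an immediate consequence of the fact that Propositions~\ref{Cok-ass} and~\ref{Cok-preserve-almost-split} produce genuine almost split sequences, which are then pinned down by their uniqueness.
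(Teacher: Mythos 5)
Your proof is correct and follows essentially the same route as the paper: both reduce (i) and (ii) to Propositions \ref{Cok-ass} and \ref{Cok-preserve-almost-split} applied to $\nu E$ and $E^*$, using the non-projectivity of $L$ to force $P'\neq 0$ and hence to rule out the exceptional shapes $(0\to I)$ and $(P\to 0)$. You actually supply a bit more detail than the paper, which dismisses both parts as ``immediate,'' and you pair each part with the correct proposition, whereas the paper's ``respectively'' has the two propositions transposed.
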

\begin{proof}
    $(i)$ and $(ii)$ are  immediate consequences of Propositions \ref{Cok-preserve-almost-split} and \ref{Cok-ass}, respectively. Note that the assumption of  $L$ not being projective guarantees  the objects $(\nu P'\st{\nu g}\rt \nu Q')$ and $((Q')^*\st{f^*}\rt (P')^*)$ satisfy the required conditions of  the mentioned  propositions.
\end{proof}
We have seen in Construction \ref{Construction1} that  we can construct   a projective presentation  for $N$ by using the minimal projective presentation of the ending terms. However, in general,   the obtained projective presentation is not minimal. In the next result, we will consider when the given short exact sequence is an almost split sequence.
\begin{proposition}\label{almost-split}
 Let $\epsilon: 0 \rt  L \st{a}\rt N \st{b}\rt M \rt 0 $ be an almost split sequence in $\mmod \La$. Then, $P'\oplus P\st{h}\rt Q'\oplus Q\rt N\rt 0 $ is a minimal projective presentation  if and only if
\begin{itemize}
\item [$(i)$]  $L  $ is not  a simple module
\item [$(ii)$]  $L$ is not  isomorphic to a direct summand of  $\nu X/{\rm soc}(\nu X)$, for some projective module $X$.
\end{itemize}
\end{proposition}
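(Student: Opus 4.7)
The plan is to reduce the minimality question to a summand-detection problem for the middle term of an almost split sequence in $\CP(\La)$, and then to read off the offending summands from the AR-quiver algorithm of subsection \ref{FromArtoCP}.

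First I would argue that the presentation $P'\oplus P\st{h}\rt Q'\oplus Q\rt N\rt 0$ is minimal if and only if the object $Y:=(P'\oplus P\st{h}\rt Q'\oplus Q)_h$ of $\CP(\La)$ admits no direct summand isomorphic to $(X\st{1}\rt X)$ or $(X\rt 0)$ for any indecomposable projective $X$. A summand of the first kind would let us cancel a copy of $X$ from both rows, and a summand of the second kind would cancel a copy of $X$ from the top row, in either case strictly shrinking the presentation without changing the cokernel $N$. Summands of the remaining degenerate shape $(0\rt X)$ are harmless: they simply correspond to the minimal projective presentation of a projective direct summand of $N$.

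Next I would invoke the fact, recorded after Construction \ref{Construction1}, that the short exact sequence
$$E\colon\ 0\rt (P'\st{g}\rt Q')_g\rt Y\rt (P\st{f}\rt Q)_f\rt 0$$
is almost split in $\CP(\La)$ whenever $\epsilon$ is almost split in $\mmod\La$. Writing $Z:=(P\st{f}\rt Q)_f$ for the minimal projective presentation of $M$, the indecomposable summands of $Y$ are precisely the sources of the irreducible morphisms into $Z$ in $\CP(\La)$. The problem therefore reduces to deciding when there is an irreducible morphism $(X\st{1}\rt X)\rt Z$ or $(X\rt 0)\rt Z$.

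Both questions are answered by the AR-quiver description of $\Gamma_{\CP}$ in subsection \ref{FromArtoCP}. For the first, Lemma \ref{righ-left-minimimal-proj-obj}(2) together with item $(1)$ of subsection \ref{FromArtoCP} shows that the unique outgoing arrow in $\Gamma_{\CP}$ from the projective-injective object $(X\st{1}\rt X)$ points to the minimal projective presentation of $\tau^{-1}_{\La}(X/{\rm rad}(X))$ (when the latter exists). Hence an irreducible morphism $(X\st{1}\rt X)\rt Z$ exists iff $M\simeq\tau^{-1}_{\La}(X/{\rm rad}(X))$, equivalently iff $L=\tau_{\La} M$ is isomorphic to the simple module $X/{\rm rad}(X)$; thus no such summand of $Y$ appears iff $(i)$ holds. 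For the second, Proposition \ref{minimal-appro-projective-dom} together with item $(2)$ of subsection \ref{FromArtoCP} shows that the outgoing arrows from $(X\rt 0)$ land exactly at the minimal projective presentations of the modules $\tau^{-1}_{\La} S$, as $S$ ranges over the non-injective indecomposable summands of $\nu X/{\rm soc}(\nu X)$. Hence an irreducible morphism $(X\rt 0)\rt Z$ exists iff $L=\tau_{\La} M$ is an indecomposable summand of $\nu X/{\rm soc}(\nu X)$ for some indecomposable projective $X$ (the non-injectivity of $L$ comes for free, since $M$ is indecomposable non-projective forces $L$ to be non-injective). So no such summand of $Y$ appears iff $(ii)$ holds, which combined with the previous case yields the proposition.

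The main obstacle is this final AR-quiver analysis, and in particular pinning down the outgoing arrow in $\Gamma_{\CP}$ from the projective-injective object $(X\st{1}\rt X)$: this does not fit directly into the ``almost split sequence ending at $(X\st{1}\rt X)$'' pattern, since that object is Ext-projective in $\CP(\La)$ and hence admits no such sequence. I would handle this either by leaning on the bookkeeping recorded in subsection \ref{FromArtoCP}, or by dualising Lemma \ref{righ-left-minimimal-proj-obj}(2) through the equivalence $\nu:\CP(\La)\simeq\CI(\La)$ of subsection \ref{injective-morphism}.
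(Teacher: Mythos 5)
Your proposal is correct and takes essentially the same route as the paper's proof: both reduce minimality of $h$ to the absence of direct summands of the form $(X\st{1}\rt X)$ or $(X\rt 0)$ in the middle term of the almost split sequence $E$ in $\CP(\La)$, and both detect those summands using Lemma \ref{righ-left-minimimal-proj-obj} for the first shape and the almost split sequence ending at $(X\rt 0)$ (Proposition \ref{minimal-appro-projective-dom} together with Corollary \ref{CorollaryCokernel}) for the second. The only cosmetic difference is the direction of travel --- you trace irreducible morphisms into $Z=(P\st{f}\rt Q)$, while the paper traces irreducible morphisms out of $\tau_{\CP}Z=(P'\st{g}\rt Q')$; these are interchangeable via the AR-translate, which is also exactly what resolves the obstacle you flag about the outgoing arrow from $(X\st{1}\rt X)$.
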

\begin{proof}
According to Proposition \ref{Cok-preserve-almost-split}, we have the following almost split sequence in $\CP$
$$\xymatrix@1{ E: \ \ 0\ar[r] & {\left(\begin{smallmatrix} P'\\ Q'\end{smallmatrix}\right)}_{g}
			\ar[r]
			&  {\left(\begin{smallmatrix}P'\oplus P\\ Q'\oplus Q\end{smallmatrix}\right)}_{h}\ar[r]&
			{\left(\begin{smallmatrix}P\\Q\end{smallmatrix}\right)}_{f}\ar[r]& 0 }.$$
We know that $h$ provides a minimal projective presentation for $N$ if and only if the object  $(P'\oplus P\st{h}\rt Q'\oplus Q)$ has neither direct summand isomorphic to $(X\st{1}\rt X)$ nor $(X\rt 0)$, for some indecomposable projective module $X$. Thanks to Lemma \ref{righ-left-minimimal-proj-obj}, the object $(P'\st{g}\rt Q')$  induces the minimal projective presentation of a simple module if and only if the middle term of the almost split sequence $E$ has a direct summand isomorphic to $(X\st{1}\rt X).$ On the other hand, the  object $h$ has a direct summand isomorphic to $(X\rt 0)$, for some projective module $X$, if and only if the object $(P'\st{g}\rt Q')$  is isomorphic to a direct summand of the middle term of the almost split sequence in $\CP$ ending at $(X\rt 0)$. Let ${\rm W} $ denote the middle term of the almost split sequence ending at $(X \rt 0)$. From Corollary \ref{CorollaryCokernel}, we know  that the cokernel of the middle term  ${\rm W}$ is $\nu X/ {\rm soc}(\nu X)$. Hence the result is proved.
\end{proof}

\subsection{$g$-vectors}
Assume that  $\CC$ is a Krull-Schmidt  category. Let ${\rm K}_0(\CC, 0)$ be the free abelian group $\oplus_{[X] \in {\rm ind }\mbox{-}\CC}\mathbb{Z}[X]$ generated by the set ${\rm ind}\mbox{-}\CC$ of isomorphism classes of indecomposable objects in $\CC$. Now, let us  assume that $\CC$ is an  exact category. We denote by ${\rm Ex}(\CC)$ the subgroup of ${\rm K}_0(\CC, 0)$ generated by
$$\langle [X]-[Y]+[Z] \,| \  X {\rightarrowtail} Y {\twoheadrightarrow} Z \ \text{\ is an short exact sequence in} \ \CC \rangle$$
We call the quotient group ${\rm K}_0(\CC):={\rm K}_0(\CC, 0)/{\rm Ex}(\CC)$ {\em the Grothendieck group} of $\CC$.

Let ${\rm K}_0({\rm prj}\mbox{-}\La)$ and ${\rm K}_0(\CP)$ denote the Grothendieck groups of the exact categories ${\rm prj}\mbox{-}\La$ and $\CP$, respectively. We have ${\rm K}_0({\rm prj}\mbox{-}\La, 0)={\rm K}_0({\rm prj}\mbox{-}\La)$, here ${\rm prj}\mbox{-}\La$ equipped with the split exact structure. There exists a well-defined  homomorphism group  ${\rm K}_0({\rm prj}\mbox{-}\La) \to {\rm K}_0(\CP)$ given by
\begin{align*}
& [(P\st{f} \rt Q)]\mapsto [Q]-[P]
\end{align*}
\noindent
Note that in view of Remark \ref{Remark-Split}, we can see $\Psi$ is well-defined.

\hspace{ 1 mm}

 Recall from \cite{DK} that the $g$-vectors are defined as follows:  Let $|\La|=n$ and let  $P_1,\cdots, P_n$ be  a complete set of pairwise non-isomorphic indecomposable projective modules.  It is well-known  that $[P_1], \cdots, [P_n]$   form
a basis of ${\rm K}_0({\rm prj}\mbox{-}\La)$. Consider $M$ in $\mmod \La$ and let
$$P_1\rt   P_0 \rt M \rt 0$$
be its minimal projective presentation in $\mmod \La$. Then we write
$$[P_0]-[P_1]=\Sigma^n_{i=1}g_i^M[P_i]$$
where, by definition,  $g^M=(g_i^M, \cdots, g_n^M)$ is the $g$-vector of $M$. It is clear  for every object  $(P\st{f}\rt Q)$ without summand of the form $(X\rt 0)$, we have $\Psi([P\st{f}\rt Q])=g^{{\rm Cok}(f)}$.

\begin{proposition}
     Let $\epsilon: 0 \rt  L \st{a}\rt N \st{b}\rt M \rt 0 $ be an almost split  sequence  in $\mmod \La$. If $L$ is not isomorphic to a direct summand of  $\nu X/{\rm soc} (\nu X)$, for some projective module $X$,  then
     $$g^N=g^L+g^M.$$
\end{proposition}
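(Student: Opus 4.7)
My plan is to produce a (generally non-minimal) projective presentation of $N$ from the minimal ones of $L$ and $M$ using Construction \ref{Construction1}, and then to measure the failure of minimality in terms of direct summands in $\CP$. Concretely, I would apply Construction \ref{Construction1} to $\epsilon$ to obtain a projective presentation
$$P' \oplus P \st{h} \rt Q' \oplus Q \rt N \rt 0,$$
where $(P' \st{g} \rt Q')$ and $(P \st{f} \rt Q)$ are the minimal projective presentations of $L$ and $M$, respectively. Working in the Grothendieck group of $\prj\La$ (with the split exact structure), additivity then immediately yields
$$[Q' \oplus Q] - [P' \oplus P] = ([Q']-[P']) + ([Q]-[P]) = g^L + g^M.$$
The task thus reduces to checking that this same class also computes $g^N$, i.e., that passing from $h$ to a genuinely minimal projective presentation of $N$ does not change the value $[\mathrm{cod}]-[\mathrm{dom}]$.

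Next, I would decompose the object $(P'\oplus P\st{h}\rt Q'\oplus Q)$ of $\CP$ into indecomposable summands. Non-minimality of the projective presentation is obstructed precisely by indecomposable summands of $h$ that are injective in $\CP$, which by Lemma \ref{proj-inde-p} must be of the form $(X\st{1}\rt X)$ or $(X \rt 0)$ with $X$ indecomposable projective. The key observation is that a summand $(X\st{1}\rt X)$ contributes $[X]-[X]=0$ to $[\mathrm{cod}(h)]-[\mathrm{dom}(h)]$, while a summand $(X\rt 0)$ contributes $-[X]$; hence only summands of the second type can cause $g^N$ to differ from $g^L+g^M$.

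The main step is then to rule out $(X\rt 0)$-summands of $h$ under the hypothesis. Here I would invoke the analysis already appearing in the proof of Proposition \ref{almost-split}: a direct summand $(X\rt 0)$ in $h$ exists iff $(P'\st{g}\rt Q')$ is a direct summand of the middle term of the almost split sequence in $\CP$ ending at $(X\rt 0)$, and by Corollary \ref{CorollaryCokernel} the cokernel of that middle term is $\nu X/\mathrm{soc}(\nu X)$. Since ${\rm Cok}(g)=L$, this is equivalent to $L$ being a direct summand of $\nu X/\mathrm{soc}(\nu X)$ for some indecomposable projective $X$, which the hypothesis of the proposition forbids.

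Finally, with no $(X\rt 0)$-summands present, $h$ decomposes as $h_1\oplus \bigoplus_k (A_k\st{1}\rt A_k)$ where $h_1$ furnishes a minimal projective presentation of $N$. The identity summands cancel in the Grothendieck-group computation, giving
$$g^N = [\mathrm{cod}(h_1)]-[\mathrm{dom}(h_1)] = [Q'\oplus Q]-[P'\oplus P] = g^L+g^M,$$
as desired. The main obstacle, beyond careful bookkeeping, is the equivalence between the existence of $(X\rt 0)$-summands of $h$ and the hypothesis on $\nu X/\mathrm{soc}(\nu X)$, but this is essentially the content already extracted in the proof of Proposition \ref{almost-split}.
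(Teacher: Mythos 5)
Your proposal is correct and follows essentially the same route as the paper: Construction \ref{Construction1} produces the presentation $h$ of $N$ from the minimal presentations of $L$ and $M$, the hypothesis rules out $(X\rt 0)$-summands of $h$ via the analysis in Proposition \ref{almost-split} together with Corollary \ref{CorollaryCokernel}, and the remaining identity summands vanish in ${\rm K}_0({\rm prj}\mbox{-}\La)$. The only (cosmetic) difference is that the paper splits into the cases $L$ simple versus non-simple, quoting Proposition \ref{almost-split} wholesale in the non-simple case, whereas you treat both uniformly by observing directly that $(X\st{1}\rt X)$-summands contribute zero to $[{\rm cod}(h)]-[{\rm dom}(h)]$.
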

\begin{proof}
    Assume that $L$ is not isomorphic to a direct summand of  $\nu X/{\rm soc} \ \nu X$, for some projective module $X$. Consider minimal projective presentations  $P \st{f} \rt Q\rt M\rt 0$ and $P'\st{g}\rt Q'\rt L\rt 0$. In view of Construction \ref{Construction1}, there exists the following short exact sequence in $\CP$
$$\xymatrix@1{ 0\ar[r] & {\left(\begin{smallmatrix} P'\\ Q'\end{smallmatrix}\right)}_{g}
			\ar[r]
			&  {\left(\begin{smallmatrix}P'\oplus P\\ Q'\oplus Q\end{smallmatrix}\right)}_{h}\ar[r]&
			{\left(\begin{smallmatrix}P\\Q\end{smallmatrix}\right)}_{f}\ar[r]& 0 }.$$
\noindent
If $L$ is not a simple module, then the middle term gives a minimal projective presentation for $N$, by Proposition \ref{almost-split}. Otherwise,  we have  a decomposition of the middle term $h$ as
  $$(U\st{l}\rt V) \oplus (Y\st{1}\rt Y ),$$
  where $U\st{l}\rt V\rt N\rt 0$ a minimal projective presentation, and $Y$ is a projective module.   By considering the above  short exact sequence , we have in ${\rm K}_0({\rm prj}\mbox{-}\La)$
  \begin{align*}
      g^N&=[V]-[U]\\
      & =[V\oplus Y]- [U\oplus Y]\\
      &=[Q'\oplus Q]-[P'\oplus P]\\
      &=([Q']-[P'])+([Q]-[P])\\
      &=g^L+g^M.
  \end{align*}
\end{proof}

Therefore, the $g$-vectors behave additively with respect to almost split sequences except for a finite number of exceptions.

\begin{proposition}
    Let $\La$ be a representation-finite algebra with an acyclic AR-quiver. Assume $I_1, \cdots I_n$ is a complete set of indecomposable injective module up to isomorphism. Then, the $g$-vectors $g_1^{I_1}, \cdots, g^{I_n}$ generates ${\rm K}_0({\rm prj}\mbox{-}\La)$.
\end{proposition}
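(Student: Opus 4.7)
The plan is to prove by strong induction on a height function on $\Gamma_{\La}$ that every indecomposable $M \in \mmod \La$ satisfies $g^M \in H := \langle g^{I_1},\ldots,g^{I_n}\rangle \subseteq {\rm K}_0({\rm prj}\mbox{-}\La)$. Once this is established, since $g^{P_i}=[P_i]$ for every indecomposable projective $P_i$ and $\{[P_i]\}_{i=1}^{n}$ is a $\mathbb{Z}$-basis of ${\rm K}_0({\rm prj}\mbox{-}\La)$, it follows immediately that $H = {\rm K}_0({\rm prj}\mbox{-}\La)$.

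To set up the height, I use that $\Gamma_{\La}$ is finite and acyclic together with representation-finiteness: iterating $\tau^{-1}$ on any indecomposable cannot cycle (a fixed point of a power of $\tau$ would force a cycle in $\Gamma_{\La}$ through the middle term of the associated almost split sequence) and so must terminate at an injective. Hence every indecomposable $M$ admits at least one directed path in $\Gamma_{\La}$ to some $I_j$, and I define $\rho(M)$ as the length of the longest such path. This is a finite non-negative integer that strictly decreases along each arrow of $\Gamma_{\La}$.

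The induction now runs on $\rho$. The trivial part is that $g^{I_j} \in H$ by definition. For the substantive step, let $M$ be indecomposable non-injective with $\rho(M) = r \geq 1$ and assume the claim for every indecomposable of $\rho$-value below $r$. Consider the almost split sequence $0 \to M \to E \to N \to 0$ in $\mmod \La$, where $N = \tau^{-1}M$. Every indecomposable summand $E_j$ of $E$ satisfies $\rho(E_j) < r$ (from the arrow $M \to E_j$), and $\rho(N) < \rho(E_j) < r$ (from $E_j \to N$), so by the inductive hypothesis all $g^{E_j}$ and $g^N$ belong to $H$. When the additivity formula from the preceding proposition applies, i.e.\ when $M$ is not isomorphic to a direct summand of $\nu X / {\rm soc}(\nu X)$ for any indecomposable projective $X$, we immediately conclude $g^M = \sum_j g^{E_j} - g^N \in H$.

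The main obstacle is the exceptional case in which $M$ \emph{is} such a summand. In that situation Construction \ref{Construction1} produces an extra $(X \to 0)$ direct summand in the middle-term projective presentation, contributing a correction term $[X]$ to the $g$-vector identity (so that $g^E = g^M + g^N + [X]$ rather than the clean additivity). Since $\nu X$ is itself some indecomposable injective $I_{j_0}$, the class $g^{I_{j_0}} = g^{\nu X}$ already lies in $H$; the strategy is to exploit the explicit almost split sequences of Propositions \ref{proj-inj-indecom} and \ref{minimal-appro-projective-dom}, which locate $X$, $(0 \to \nu X)$, and $(X \to 0)$ inside $\CP(\La)$, to express $[X]$ as a $\mathbb{Z}$-combination of $g^{I_{j_0}}$ and $g$-vectors of indecomposables of strictly smaller $\rho$, thereby closing the induction. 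Verifying that this reduction always terminates inside $H$ is the delicate point, and it is automatic when all quotients of indecomposable injectives are again injective (so that $M$ in the exceptional case is itself injective, collapsing into the trivial base of the induction).
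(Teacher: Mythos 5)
Your induction is set up soundly---the height function $\rho$ is well defined and strictly decreasing along arrows of $\Ga_{\La}$ because the quiver is finite and acyclic and every non-injective vertex has an outgoing arrow---but the argument breaks down exactly where you say it does, and what you offer there is a strategy, not a proof. In the exceptional case the identity reads $g^E=g^M+g^{\tau^{-1}M}+\sum_k[X_k]$ with the $X_k$ indecomposable projectives, so to conclude $g^M\in H$ you must already know $\sum_k[X_k]\in H$. Since $[X_k]=g^{X_k}$ and the whole content of the proposition is precisely that the classes $[P_i]$ of the indecomposable projectives lie in $H$, this is an instance of the statement being proved, and your height function gives you no leverage on it: the only relation your setup produces is an irreducible map $\nu X_k\to M$, which bounds $\rho(\nu X_k)$ from below by $\rho(M)$ and says nothing about where $X_k$ sits. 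The closing remark that the difficulty ``is automatic when all quotients of indecomposable injectives are again injective'' confirms that the general case is open in your write-up. So there is a genuine gap, and it is not a routine one to fill inside your framework: you would need a second, interleaved induction producing $[X]\in H$ for projectives $X$, which is essentially a separate proof of the theorem.

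The paper avoids this by never leaving the morphism category: it starts from the almost split sequence in $\CP$ ending at $(P_i\rt 0)$ (Proposition \ref{minimal-appro-projective-dom}), which yields $[P_i]=g^{\nu P_i}-\Psi(\mathrm{mid})$ under the homomorphism $\Psi:{\rm K}_0(\CP)\rt {\rm K}_0({\rm prj}\mbox{-}\La)$, and then recursively eliminates the terms $\Psi({\rm W}_j)$ using almost split sequences in $\CP$: sequences starting at ${\rm W}_j$ when ${\rm W}_j$ is non-injective in $\CP$, the sequence ending at $(U\rt 0)$ when ${\rm W}_j=(U\rt 0)$ (which injects a term $g^{\nu U}\in H$), and $\Psi=0$ on $(U\st{1}\rt U)$. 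Termination comes from finiteness and acyclicity of $\Ga_{\CP}$. The point is that the obstruction you meet---classes of objects $(X\rt 0)$---is absorbed into the recursion as just another vertex of $\Ga_{\CP}$ rather than appearing as an unanalysed correction term in $\mmod\La$. If you want to salvage your approach, you should run your induction on $\Ga_{\CP}$ rather than on $\Ga_{\La}$, which is in effect what the paper does.
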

\begin{proof}
First,  note that by  \ref{FromArtoCP} and our assumption,    the AR-quiver  $\Gamma_{\CP}$ is acyclic as well. Let $P_1,\cdots, P_n$ be  a complete set of pairwise non-isomorphic indecomposable projective modules. We need to prove that all $[P_i]$'s can be expressed as a $\mathbb{Z}$-linear combination of $[g^{I_j}]$'s. Take an integer  $1 \leq i \leq n$.  From Proposition \ref{minimal-appro-projective-dom}, we have the following almost split exact sequence
\begin{equation}\label{the last-Eq}
\xymatrix@1{  0\ar[r] & {\left(\begin{smallmatrix} P\\ Q\end{smallmatrix}\right)}_{g}
			\ar[rr]^-{\left(\begin{smallmatrix} [1~~0]^t\\ 1\end{smallmatrix}\right)}
			& & {\left(\begin{smallmatrix}P\oplus P_i\\  Q\end{smallmatrix}\right)}_{[g~~e]}\ar[rr]^-{\left(\begin{smallmatrix} [0~~1] \\ 0\end{smallmatrix}\right)}& &
		{\left(\begin{smallmatrix}P_i\\0\end{smallmatrix}\right)}_{0}\ar[r]& 0 },
  \end{equation}
where $Q\st{g}\rt P\rt \nu P_i\rt 0$ is  a minimal projective presentation. The short exact  sequence gives the following equality in ${\rm K}_0(\CP)$

$$[P\oplus P_i \st{[g~~e]}\rt Q]=[P\st{g}\rt Q]+[P_i\rt 0].$$ Applying the group homorphism $\Psi$ on the above equality, we get
\begin{align}\label{equ}
    [P_i]&=g^{\nu P_i}-\Psi([P\oplus P_i \st{[g~~e]}\rt Q]).
\end{align}
We decompose the object $(P\oplus P_i \st{[g~~e]}\rt Q)$ as
$$(P\oplus P_i \st{[g~~e]}\rt Q)=(A\st{f}\rt B)\oplus (X\st{d}\rt Y)\oplus {\rm W}$$
where ${\rm Cok}(f)\simeq \nu K$ for some projective module $K$, $d$ is an isomorphism, and ${\rm W}$ has no direct summand isomorphic to either $(Y\st{1}\rt Y)$ or $(Z\st{r}\rt V)$ with injective module ${\rm Cok}(r)$. Then, we can extend  \ref{equ} as below
\begin{align*}
    [P_i]&=g^{\nu P_i}-\Psi([P\oplus P_i \st{[g~~e]}\rt Q])\\
    &= g^{\nu P_i}-\Psi(A\st{f}\rt B ) -\Psi(X\st{d}\rt Y ) -\Psi({\rm W})\\
   &=g^{\nu P_i}-g^{\nu K}-\Psi({\rm W})   \ \
\end{align*}
  Note that since  $X\simeq Y$, we get $\Psi(X\st{d}\rt Y)=0$. If ${\rm W}=0$, then we are done. Otherwise, let  ${\rm W}={\rm W}_1\oplus \cdots \oplus {\rm W}_m$ be a decomposition of ${\rm W}$ into indecomposable modules. Hence $\Psi({\rm W})=\Psi({\rm W}_1)+\cdots \Psi({\rm W}_m)$. For any direct summand ${\rm W_i}$ that  is non-injective in $\CP$,  there is an almost split sequence  $0 \rt {\rm W}_i\rt B_i\rt \tau^{-1}_{\CP} {\rm W}_i\rt 0$ in $\CP$. Due to this almost split sequence, we  have $\Psi({\rm W}_i)=\Psi(B_i)-\Psi(\tau^{-1}_{\CP} {\rm W}_i)$. If ${\rm W}_i$ is injective, two cases hold:
  \begin{itemize}
  \item [$(1)$] ${\rm W}_i=(U\st{1}\rt U)$, in which case  there is nothing to prove, as $\Psi({\rm W}_i)=0$.
  \item [$(2)$] ${\rm W}_i=(U\rt 0)$, in which case there is an almost split sequence  as  given in \ref{the last-Eq} with the middle term ${\rm X}_i$ and the first term with the cokernel $\nu E$, for some injective module. Hence we can replace $\Psi({\rm W}_i)$ with $\Psi({\rm X}_i)+g^{\nu E}$ in ${\rm K}_0({\rm prj}\mbox{-}\La)$.
  \end{itemize}

  We apply  the same process to the new terms $\Psi(B_i), \Psi(\tau^{-1}_{\CP} {\rm W}_i)$ and $\Psi({\rm X}_i)$. As $\Gamma_{\CP}$ is acyclic, the process will stop and   we will obtain  the desired $\mathbb{Z}$-linear combination.
\end{proof}

\end{document}